\newcommand{\be}{\begin{equation}}
\newcommand{\ee}{\end{equation}}
\newcommand{\ba}{\begin{eqnarray}}
\newcommand{\ea}{\end{eqnarray}}
\newcommand{\bal}{\begin{align}}
\newcommand{\eal}{\end{align}}
\newcommand{\baln}{\begin{align*}}
\newcommand{\ealn}{\end{align*}}
\newcommand{\bi}{\begin{itemize}}
\newcommand{\ei}{\end{itemize}}
\newcommand{\bn}{\begin{enumerate}}
\newcommand{\en}{\end{enumerate}}
\newcommand{\bbm}{\begin{bmatrix}}
\newcommand{\ebm}{\end{bmatrix}}
\newcommand{\bpm}{\begin{pmatrix}}
\newcommand{\epm}{\end{pmatrix}}
\newcommand{\bsm}{\left ( \begin{smallmatrix}}
\newcommand{\esm}{\end{smallmatrix} \right) }
\newcommand{\bp}{\begin{proof}}
\newcommand{\ep}{\end{proof}}
\newcommand{\nn}{\nonumber}
\newcommand{\mr}{\ensuremath{\mathrm}}
\newcommand{\scr}{\ensuremath{\mathscr}}
\newcommand{\mbf}{\ensuremath{\boldsymbol}}
\newcommand{\mc}{\ensuremath{\mathcal}}
\newcommand{\mf}{\ensuremath{\mathfrak}}
\newcommand{\ov}{\ensuremath{\overline}}
\newcommand{\sm}{\ensuremath{\setminus}}
\newcommand{\wt}{\ensuremath{\widetilde}}
\newcommand{\ga}{\ensuremath{\gamma}}
\newcommand{\la}{\ensuremath{\lambda }}
\newcommand{\om}{\ensuremath{\omega}}
\def\C{\mathbb{C}}
\def\D{\mathbb{D}}
\def\Z{\mathbb{Z}}
\def\N{\mathbb{N}}
\def\B{\mathbb{B}}
\def\ofr{\overline{\mathfrak{r}}}
\def\fr{\mathfrak{r}}
\def\fz{\mathfrak{z}}
\def\cc{\mathfrak{C}}
\def\bH{\mathbb{H}}
\def\wvec{\overrightarrow}
\def\A{\mathcal{A} _d}
\def\fp{\mathbb{C} \{ \mathbb{\mathfrak{z}} \} }
\newcommand{\cH}{\ensuremath{\mathcal{H}}}
\newcommand{\cA}{\ensuremath{\mathcal{A}}}
\newcommand{\fs}{\ensuremath{\sigma_{\mathbb{N}}}} %finite spectrum
\newcommand{\GL}{\ensuremath{\mathrm{GL}}} % general linear group
\newcommand{\SF}{\ensuremath{\mathrm{SF}}} % semi-Fredholm
\newcommand{\ind}{\ensuremath{\mathrm{ind}}} %index of an SF oeprator
\newcommand{\F}{\ensuremath{\mathbb{F} }}
\newcommand{\fB}{\ensuremath{\mathbb{B}}}
\newcommand{\Addresses}{{% additional braces for segregating \footnotesize
  \bigskip
  \footnotesize

  M.~T.~Jury, \textsc{Department of Mathematics, University of Florida}\par\nopagebreak
  \textit{E-mail address:} \texttt{mjury@ad.ufl.edu}

  \medskip

  R.~T.~W.~Martin, \textsc{Department of Mathematics, University of Manitoba}\par\nopagebreak
  \textit{E-mail address:} \texttt{Robert.Martin@umanitoba.ca}
    
\medskip

E.~Shamovich, \textsc{Department of Mathematics, Ben-Gurion University of the Negev}\par\nopagebreak
\textit{E-mail address:} \texttt{shamovic@bgu.ac.il}

\vspace{1cm}

}}
\newcommand{\ip}[2]{\ensuremath{\langle {#1} , {#2} \rangle}}
\newcommand{\ipcn}[2]{\ensuremath{\left( {#1} , {#2} \right) _{\C ^n}}}
\newcommand{\ipcN}[2]{\ensuremath{\left( {#1} , {#2} \right) _{\C ^N}}}
\newcommand{\dom}[1]{\ensuremath{\mathrm{Dom} ({#1}) }}
\newcommand{\sing}[1]{\ensuremath{\mathrm{Sing} ({#1}) }}
\renewcommand{\dim}[1]{\ensuremath{\mathrm{dim} \left( {#1} \right) }}
\newcommand{\ran}[1]{\ensuremath{\mathrm{Ran} \left( {#1} \right) }}
\renewcommand{\ker}[1]{\ensuremath{\mathrm{Ker} \left( {#1} \right) }}
\newcommand{\spr}[1]{\ensuremath{\mathrm{spr} \left( {#1} \right) }}
\newcommand{\re}[1]{\ensuremath{\mathrm{Re} \left( {#1} \right) }}
\numberwithin{equation}{section}
\numberwithin{subsection}{section}
\newtheorem{thm}[subsection]{Theorem}
\newtheorem{lemma}[subsection]{Lemma}
\newtheorem{prop}[subsection]{Proposition}
\newtheorem{cor}[subsection]{Corollary}
\newtheorem*{thm*}{Theorem}
\newtheorem{thmA}{Theorem}
\theoremstyle{definition}
\newtheorem{defn}[subsection]{Definition}
\newtheorem{remark}[subsection]{Remark}
\newtheorem{eg}[subsection]{Example}
\title{Non-commutative rational functions in the full Fock space}
\author{Michael T. Jury\thanks{Supported by NSF grant DMS-1900364}}
\affil[1]{\footnotesize University of Florida}
\author[2]{Robert T.W. Martin\thanks{Supported by NSERC grant 2020-05683}}
\affil[2]{\footnotesize University of Manitoba}
\author[3]{Eli Shamovich}
\affil[3]{ \footnotesize Ben-Gurion University of the Negev}
\date{}
\begin{document}
\maketitle

\begin{abstract}
A rational function belongs to the Hardy space, $H^2$, of square-summable power series if and only if it is bounded in the complex unit disk. Any such rational function is necessarily analytic in a disk of radius greater than one. The inner-outer factorization of a rational function, $\mf{r} \in H^2$ is particularly simple: The inner factor of $\mf{r}$ is a (finite) Blaschke product and (hence) both the inner and outer factors are again rational. 

We extend these and other basic facts on rational functions in $H^2$ to the full Fock space over $\C ^d$, identified as the \emph{non-commutative (NC) Hardy space} of square-summable power series in several NC variables. In particular, we characterize when an NC rational function belongs to the Fock space, we prove analogues of classical results for inner-outer factorizations of NC rational functions and NC polynomials, and we obtain spectral results for NC rational multipliers.   
\end{abstract}

\section{Introduction}

A rational function, $\mf{r}$, in the complex plane, is bounded in the unit disk, $\D$, if and only if it is analytic in a disk $r \D$ of radius $r>1$. Moreover, a rational function belongs to the Hardy space $H^2$ of square-summable Taylor series in the unit disk, if and only if is uniformly bounded in $\D$, \emph{i.e.} if and only if it belongs to $H ^\infty$, the algebra of uniformly bounded analytic functions in the disk. Recall that $H^\infty$ can be viewed as the \emph{multiplier algebra} of $H^2$, the algebra of all functions which multiply $H^2$ into itself. That is, any $h \in H^\infty$ defines a bounded linear multiplication operator $M_h : H^2 \rightarrow H^2$, where $M_h f := hf$ for $f \in H^2$. Such a multiplier is called \emph{inner} if $M_h$ is an isometry and \emph{outer} if $M_h$ has dense range. An element $f \in H^2$ is called outer if it is cyclic for the \emph{shift}, $S = M_z$. By classical results of Herglotz and F. Riesz, any element, $f$, of the Hardy spaces $H^2$ or $H^\infty$ has a unique inner-outer factorization, and the inner factor can be further factored into a \emph{Blaschke inner} which contains all the zero or `vanishing information' of $f$, and a \emph{singular inner} which has no zeroes in $\D$. The inner-outer factorization of a rational $\fr \in H^2$ is particularly simple: The inner factor of a $\fr$ is always a finite Blaschke product (this is again a rational function) so that the outer factor is also a rational function. Moreover, in the case where $\fr = p$ is a polynomial, its outer factor is a polynomial of degree not exceeding that of $p$. 

A canonical noncommutative (NC) analog of the classical $H^2$ is then the full Fock space, $\bH ^2 _d$, of square-summable power series in several non-commuting variables: Given a $d-$tuple of formal NC variables, $\mf{z} := ( \mf{z} _1 , \cdots , \mf{z} _d )$, any $f \in \bH ^2 _d$ is a power series
$$ f (\mf{z} ) = \sum _{\alpha \in \F ^d} \hat{f} _\alpha \mf{z} ^\alpha; \quad \quad \sum _\alpha | \hat{f} _\alpha | ^2 < + \infty, $$ where $\F ^d$ is the \emph{free monoid}, the set of all words in the $d$ letters $\{ 1 , \cdots , d \}$, and given any word $\alpha = i_1 \cdots i_n \in \F ^d$, $i_k \in \{ 1 , \cdots , d \}$, the free monomial, $\mf{z} ^\alpha$, is defined as $\mf{z} ^\alpha := \mf{z} _{i_1} \cdots \mf{z} _{i_n}$. (Multiplication in $\F ^d$ is defined by concatenation of words, the unit is $\emptyset$, the \emph{empty word}, containing no letters and $\mf{z} ^\emptyset :=1$. The free monoid is the universal monoid on $d$ generators.)  Left multiplication by any of the $d$ independent variables defines an isometry on $\bH ^2 _d$, $L_k := M^L _{\mf{z} _k}$. It is straightforward to check that the $L_k$ have pairwise orthogonal ranges so that the row $d-$tuple $L:= \left( L_1 , \cdots , L_d \right) : \bH ^2 _d \otimes \C ^d \rightarrow \bH ^2 _d$ defines an isometry from several copies of the Fock space into itself which we call the \emph{left free shift}.

The NC analog of $H^{\infty}$ is the $WOT-$closed algebra $\bH ^\infty_d$ generated by the left free shifts. Just as $H^2$ is a reproducing kernel Hilbert space, $\bH ^2_d$ is an NC reproducing kernel Hilbert space (NC-RKHS) in the sense of Ball, Marx, and Vinnikov \cite{BMV}.  That is, for any $Z \in \B ^d _n$ and $y,v \in \C ^n$, the \emph{matrix-entry point evaluation} $\ell _{Z,y,v} : \bH ^2 _d \rightarrow \C$ defined by 
$$ \ell _{Z,y,v} (f) = y^* f(Z) v, $$ is a bounded linear functional on $\bH ^2 _d$ so that by Riesz representation there is a unique \emph{NC Szeg\"o kernel vector}, $K\{Z , y ,v \}$ implementing this linear functional, 
$$ y^* f(Z) v = \ip{K\{ Z, y ,v \}}{f}_{\bH ^2 _d}. $$ With this identification, $\bH ^\infty_d$ is the algebra of left multipliers of $\bH ^2_d$. Additionally, the elements of $\bH ^2_d$ and $\bH ^\infty_d$ are NC functions in the sense of \cite{AM15,KVV} in the unit free row-ball: 
\[
\B^d_{\N} = \bigsqcup_{n =1} ^\infty  \B ^d _n; \quad \quad \B ^d _n := \left\{ \left( X_1,\cdots ,X_d \right) \in \C ^{n\times n} \otimes \C ^{1\times d} \left| \ \sum_{j=1}^d X_j X_j^* < I_n \right. \right\}.
\]
Each locally bounded (hence analytic \cite[Chapter VII]{KVV}) NC function in $\B^d_{\N}$ admits a Taylor series expansion in NC variables around the origin that converges in an appropriate sense in $\B^d_{\N}$. The functions in $\bH ^2 _d$ are those locally bounded NC functions in $\B ^d _\N$ square-summable Taylor coefficients and the functions in $\bH ^\infty_d$ are uniformly bounded in $\B^d_{\N}$.  

A non-commutative inner-outer factorization for elements of $\bH ^2_d$ and $\bH ^\infty_d$ was first obtained by Popescu \cite{Pop-vN, Pop-multi} and independently by Davidson-Pitts \cite{DP-inv}. Here, inner and outer in this NC setting are defined in direct analogy to classical theory: An NC function, $F$, in $\bH ^\infty _d$ is \emph{inner} if it defines an isometric left multiplier of Fock space and \emph{outer} if $M^L _F$ has dense range. An element $f \in \bH ^2 _d$ is \emph{outer} if it is cyclic for the isometric right shifts, $R_k := M^R _{\mf{z} _k}$. In \cite{NC-BSO}, the authors have refined this NC inner-outer factorization by extending the classical Blaschke-Singular-Outer factorization to the NC Hardy spaces $\bH ^\infty _d$ and $\bH ^2 _d$. Namely, any NC inner $\Theta \in \bH ^\infty _d$ further factors uniquely as the product of an NC Blaschke inner and an NC singular inner. As in the classical case, the NC Blaschke factor, $B$, of $f \in \bH ^2 _d$ encodes all information about the ``zeroes'' of $f$ and the NC singular inner factor is pointwise invertible in $\B^d_{\N}$. Several questions regarding this NC Blaschke-Singular-Outer factorization have remained open. In particular, a natural question is whether, in analogy with classical Hardy space theory, the inner factors of rational functions in $\bH ^2 _d$ are Blaschke. In this paper, we provide an affirmative answer to this question, we characterize when an NC rational function belongs to Fock space, and we provide applications to the spectral theory of multipliers of Fock space. 

A complex NC rational expression is any syntactically valid combination of the several NC variables $\mf{z} _1 , \cdots , \mf{z} _d$, the complex scalars, $\C$, the operations $+ , \cdot , ^{-1}$, and parentheses $\left( , \right)$ \cite{Volcic}. For example,
$$ \mr{r} (\mf{z} _1 , \mf{z} _2 ) = \mf{z} _1 ^3 \left( \mf{z}_2 \mf{z} _1 ^{-1} + 2 \right) ^{-1} -7 \mf{z}  _2 \mf{z} _1 \mf{z} _2.$$ The domain, $\dom{\mr{r}}$, of such an expression is simply the collection of all $d$-tuples of matrices of all sizes, $X = (X _1 , \cdots , X_d ) \in \C ^{n\times n} \otimes \C ^{1 \times d}$, $n \in \N$, for which $\mr{r} (X) \in \C ^{n \times n}$ is defined. An NC rational function, $\fr$, is an equivalence class of NC rational expressions with respect to the relation $\mr{r} _1 \equiv \mr{r} _2$, if $\dom{\mr{r} _1} \cap \dom{ \mr{r} _2} \neq \emptyset$ and for every $X$ in the intersection of their domains $\mr{r} _1(X) = \mr{r}_2(X)$. For an NC rational function $\mf{r}$, we will abuse notations and write $\dom{\mf{r}}$ for the collection of all points for which some $\mr{r} \in \mf{r}$ is defined. That is, $\dom{\fr} := \cup _{\mr{r} \in \fr} \dom{\mr{r}}$ and we write $\fr (X) := \mr{r} (X)$ for any $\mr{r} \in \fr$ with $X \in \dom{\mr{r}}$. By realization theory for NC rational functions, any NC rational function in $d-$variables, $\mf{r}$, with $0 \in \dom{\mf{r}}$ has a \emph{minimal realization}: There is a triple $(A, b, c)$ with $A \in \C ^{(n\times n) \cdot d} := \C ^{n \times n} \otimes \C ^{1\times d}$ and $b,c \in \C ^n$, so that for any $X \in \C ^{(m\times m) \cdot d}$,
$$ \mf{r} (X) = b^* L_A (X) ^{-1} c; \quad \quad L_A (X) := I - \sum A_j \otimes X_j.$$ (Here minimal means that $n$ is as small as possible \cite{KVV-ncrat,Volcic}, and $L_A $ is called a (monic) \emph{linear pencil}.) Realizations of rational functions have been studied extensively and have numerous applications to fields such as free probability and free real algebraic geometry \cite{DDSS,HKM-relax,HKMS,HMS-realize}. One of the main results of this paper is:

\begin{thmA} \label{thmA}
Let $\mf{r}$ be an NC rational function with minimal realization $(A,b,c)$ of size $N$. Then the following are equivalent.
\bn

\item[(i)] $\mf{r} \in \bH ^2_d$.

\item[(ii)] $\mf{r} = K \{ Z, y , v \}$ is an NC Szeg\"o kernel vector for some $Z \in \B ^d _N$ and $y, v \in \C ^N$.

\item[(iii)] The joint spectral radius, $\mr{spr} (A)$, of $A$ is $<1$. 

\item[(iv)] There exists $r > 1$, such that $r \B^d_{\N} \subset \dom{\mf{r}}$.

\item[(v)]  $\overline{\B^d_{\N}} \subset \dom{\mf{r} }$.

\item[(vi)] $\mf{r} \in \cA_d := \mr{Alg} (I, L) ^{- \| \cdot \| }$, the \emph{NC disk algebra}.

\item[(vii)]  $\mf{r} \in \bH ^\infty _d$. 
\en
\end{thmA}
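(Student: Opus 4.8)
The plan is to prove Theorem A by establishing a cycle of implications, anchored on the spectral-radius condition (iii), which is the most tractable hypothesis to manipulate via the realization $(A,b,c)$. The overall scheme I would follow is: $(iii) \Leftrightarrow (iv) \Leftrightarrow (v)$ are essentially ``domain'' facts flowing directly from the minimal realization; then $(iii) \Rightarrow (vi) \Rightarrow (vii) \Rightarrow (i)$ and $(iii) \Rightarrow (ii) \Rightarrow (i)$; and the circle is closed with the ``hard'' implication $(i) \Rightarrow (iii)$.

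\textbf{The easy arc.} Recall that if $\mf{r}$ has minimal realization $(A,b,c)$ of size $N$, then $\mf{r}(X) = b^* L_A(X)^{-1} c$ is defined precisely when $L_A(X) = I - \sum_j A_j \otimes X_j$ is invertible, and by minimality $\dom{\mf{r}}$ is exactly the invertibility domain of $L_A$ (the singularity locus of $\mf{r}$ coincides with that of the pencil). Now $\sum_j A_j \otimes X_j$ has operator norm controlled by the joint spectral radius of $A$: the condition $\mr{spr}(A) < 1$ is equivalent to the statement that $\sup\{ \| \sum_j A_j \otimes X_j \| : X \in \overline{r\B^d_\N} \} < 1$ for some $r > 1$ (this is the standard Rota--Strang / joint-spectral-radius characterization, applied to the completely bounded / matrix-norm setting). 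From this $L_A(X)$ is invertible by Neumann series on a neighborhood of $\overline{\B^d_\N}$, giving $(iii) \Rightarrow (iv) \Rightarrow (v)$; and $(v) \Rightarrow (iii)$ because if $\mr{spr}(A) \geq 1$ there is a matrix tuple $X$ of norm arbitrarily close to $1$ (rescale the tuple witnessing the spectral radius) at which $L_A(X)$ is singular, so $\mf{r}$ blows up on $\overline{\B^d_\N}$. For $(iii) \Rightarrow (vi)$: when $\mr{spr}(A)<1$ the Neumann series $L_A(L)^{-1} = \sum_{\alpha} A^\alpha \otimes L^\alpha$ converges \emph{in operator norm} in $B(\bH^2_d)$ (geometric decay of $\|A^\alpha\|$ beats the polynomial growth of the word count), exhibiting $M^L_{\mf{r}} = b^* L_A(L)^{-1} c$ as a norm limit of polynomials in $L$, i.e. an element of the NC disc algebra $\cA_d$. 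Then $(vi) \Rightarrow (vii)$ is immediate ($\cA_d \subset \bH^\infty_d$), and $(vii) \Rightarrow (i)$ is immediate since $\bH^\infty_d \cdot 1 \subset \bH^2_d$ (apply the multiplier to the constant function $1$; equivalently $\mf{r} = M^L_{\mf{r}} 1 \in \bH^2_d$).

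\textbf{The Szeg\H{o} kernel characterization.} For $(iii) \Rightarrow (ii)$ I would compute the power series of $\mf{r}$ directly from the realization: $\hat{\mf{r}}_\alpha = b^* A^\alpha c$, so that $\mf{r} = \sum_\alpha \overline{b^* A^\alpha c}\, \mf{z}^\alpha$ as a formal series, and I want to match this to the expansion of an NC Szeg\H{o} kernel vector $K\{Z,y,v\}$. Recall the NC Szeg\H{o} kernel has the Neumann-type form $K\{Z,y,v\}(\mf{z}) = \sum_\alpha (y^* Z^{\tilde\alpha} v)^{\,?}\,\mf{z}^\alpha$ — more precisely $K\{Z,y,v\} = \sum_\alpha \langle \text{(kernel data)}\rangle \mf{z}^\alpha$ with coefficients built from words in $Z$; the point is that both $\mf{r}$ and $K\{Z,y,v\}$ have coefficients of the ``word-evaluated'' form $b^* A^\alpha c$. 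Taking $Z = A^*$ (transpose/adjoint of the pencil tuple, suitably arranged), $y = b$, $v = c$ should identify $\mf{r}$ with $K\{A^*, b, c\}$ up to the bookkeeping of word reversal; and $A^* \in \B^d_N$ precisely because $\mr{spr}(A) = \mr{spr}(A^*) < 1$ forces $\sum_j A_j^* (A_j^*)^* $ to have norm $<1$ after a similarity (again joint-spectral-radius: $\mr{spr}<1$ iff the tuple is jointly similar to a strict row contraction). Conversely $(ii) \Rightarrow (i)$ is immediate since every Szeg\H{o} kernel vector lies in $\bH^2_d$ by construction (Riesz representation). This closes a second route into (i).

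\textbf{The main obstacle: $(i) \Rightarrow (iii)$.} This is where the real work is, and it is the only implication that is not a soft consequence of realization bookkeeping plus Neumann series. The issue: membership $\mf{r} \in \bH^2_d$ only says $\sum_\alpha |b^* A^\alpha c|^2 < \infty$, a \emph{scalar} summability condition, and I must upgrade it to the \emph{operator} statement $\mr{spr}(A) < 1$. I would argue by contradiction and by exploiting minimality: if $\mr{spr}(A) \geq 1$, minimality means the controllability/observability spaces $\mr{span}\{A^\alpha c\}$ and $\mr{span}\{(A^*)^\alpha b\}$ are all of $\C^N$, so the ``tails'' of the coefficient sequence genuinely see the full growth rate of $\|A^\alpha\|$ rather than being annihilated by a degenerate $b$ or $c$. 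Concretely, I expect to show that minimality forces a lower bound of the form $\sum_{|\alpha| = n} |b^*A^\alpha c|^2 \gtrsim \|A^{\otimes}\|_{\text{something}}^{2n}$ (comparing the Gramians $\sum_{|\alpha|=n} A^\alpha c\, c^* (A^*)^\alpha$, which by controllability are eventually strictly positive, against the observability vectors), and if $\mr{spr}(A) \geq 1$ this sum does not go to zero, contradicting square-summability. An alternative, possibly cleaner route: use the already-established $(iii) \Leftrightarrow (iv)$ to instead prove $(i) \Rightarrow (iv)$ directly — show that if $\mf{r} \in \bH^2_d$ then $\mf{r}$, as an NC function, extends analytically past $\overline{\B^d_\N}$; this would follow if one knows an NC rational function in $\bH^2_d$ is automatically a \emph{bounded} NC function (a Fatou/maximum-principle-type input), after which boundedness of a rational NC function on the closed ball forces the pencil $L_A$ to be invertible on a dilate of the ball by compactness of $\overline{\B^d_\N}$ in each matrix level and the fact that the singular locus of $L_A$ is a proper algebraic set. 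I anticipate the cleanest proof in the paper routes $(i)$ through the structure of the minimal realization and the observation that the reproducing-kernel inner product $\|\mf{r}\|^2_{\bH^2_d} = \sum_\alpha |b^*A^\alpha c|^2$ can be rewritten as a trace/Lyapunov expression $\sum_n b^* A^{(n)} (\text{Gramian}) (A^{(n)})^* b$ whose finiteness, under minimality, is equivalent to solvability of a Stein/Lyapunov inequality certifying $\mr{spr}(A) < 1$.
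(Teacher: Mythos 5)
Your global strategy --- a cycle anchored on (iii), with the easy chains (iii)$\Rightarrow$(vi)$\Rightarrow$(vii)$\Rightarrow$(i) and (iii)$\Rightarrow$(ii)$\Rightarrow$(i), the domain equivalences (iii)$\Leftrightarrow$(iv)$\Leftrightarrow$(v), and the closing arrow (i)$\Rightarrow$(iii) --- matches the paper's architecture, and the genuinely easy arcs are correct as you describe them. But two links in your chain are missing rather than merely compressed, and only one of them do you flag as such.

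The implication (v)$\Rightarrow$(iii), which you present as routine, is not a matter of ``rescaling the tuple witnessing the spectral radius.'' There is no such ready-made tuple: $\mr{spr}(A)$ equals $\spr{\sum_j\ov{A}_j\otimes A_j}^{1/2}$, the extremal eigenvalue of $\sum_j\ov{A}_j\otimes A_j$ need not be real (or positive), and its eigenvector does not in general come from a $d$-tuple $Z$ making $I-\sum_j A_j\otimes Z_j$ singular. The paper's Proposition~\ref{prop:rational_spectral_radius} does real work here: it block upper-triangularizes $A$ with irreducible diagonal blocks, applies a further block-diagonal similarity so that each diagonal block has the normal form $\rho_j Y^{(j)}$ with $Y^{(j)}$ a row co-isometry, and then writes down an explicit tuple $Z$ of norm exactly $1/\mr{spr}(A)$, built from $\ov{Y}^{(1)}$ and padded by zeros, for which the vectorized pencil map $\ell_{A,Z}$ annihilates a nonzero projection. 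This construction is the content of the implication, not a soft compactness statement.

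For (i)$\Rightarrow$(iii) you correctly identify the hard step and correctly guess that minimality must drive it, but the lower-bound scheme you sketch does not close. The fixed-length Gramians $\sum_{|\alpha|=n}A^\alpha c\,c^*(A^\alpha)^*$ can be singular for every $n$; controllability only forces the sum over \emph{all} $\alpha$ to have full rank, so there is no word-length-by-word-length lower bound $\sum_{|\alpha|=n}|b^*A^\alpha c|^2\gtrsim\|\cdot\|^{2n}$ and arguing by contradiction from such a bound does not work. The paper's route (Lemma~\ref{evalA} followed by Theorem~\ref{NCreg}) is different in kind: observability and controllability supply NC polynomials $\gamma_i,\beta_j$ with $c^*\gamma_i(A)=e_i^*$ and $\beta_j(A)b=e_j$, which gives $e_i^*p(A)e_j=\ip{\ofr}{\gamma_i p\beta_j}_{\bH^2_d}$ for every NC polynomial $p$, and then Cauchy--Schwarz together with boundedness of $\gamma_i(L)$ and $\beta_j(R)$ yields $|e_i^*p(A)e_j|\leq\|\gamma_i(L)\|\,\|\beta_j(R)\|\,\|p\|_2\,\|\fr\|_2$. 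Thus $p\mapsto p(A)$ is $\bH^2_d$-continuous, which upgrades the hypothesis ``$\fr\in\bH^2_d$'' to the much stronger ``$\fr_{y,v}\in\bH^2_d$ for \emph{all} $y,v\in\C^N$.'' Only after that can one sum over an orthonormal basis in the $v$-slot to get $\sum_n\ip{y}{\mr{Ad}^{(n)}_{A,A^*}(I)y}<\infty$ for all $y$, conclude that $\mr{Ad}^{(n)}_{A,A^*}(I)\to 0$ (i.e.\ $A$ is a pure finite-dimensional row tuple), and invoke the multivariable Rota--Strang theorem to obtain joint similarity to a strict row contraction. Your Lyapunov/Stein instinct is aiming at the right theorem, but the pivot through ``all $\fr_{y,v}$'' is the missing idea, and it is precisely the step that uses minimality.

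A small bookkeeping point: the Szeg\H{o}-kernel identification in (iii)$\Rightarrow$(ii) goes through the entry-wise conjugates $\ov{A},\ov{b},\ov{c}$ (via the conjugation $C$ on Fock space, after a similarity making the tuple a strict row contraction), not the adjoints $A^*$; and the coefficient of $\mf{z}^\alpha$ in $\fr$ is $b^*A^\alpha c$ itself, not its conjugate.
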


For rational functions, $\fr (z)$, of a single complex variable, it is not hard to prove (using the Plancherel theorem) that if $\mf{r}$ is regular near $0$ and its power series at $0$ has square-summable coefficients,  then it cannot have any poles in the closed unit disk $|z|\leq 1$, and is therefore regular in a disk of radius $\rho>1$ (and hence bounded and continuous in $|z|\leq 1$). The equivlance of (i) with (iv)--(vii) in Theorem~\ref{thmA} can then be read as an extension of these facts to NC rational functions in the row ball (though the proofs will be rather more involved).  This is made more interesting by the observation that these one-variable facts do not generalize to the NC polydisk, for example (an example is given after the proof of Theorem~\ref{thmA}).

We will apply Theorem~\ref{thmA} to the study the spectra and cyclicity of NC rational multipliers. In Subsection \ref{sec:spectrum} we prove that the spectrum of a rational multiplier is determined by the spectra of its evaluations at finite levels of the free row-ball. Combined with results of Davidson-Pitts and Conway-Morrell we obtain:

\begin{thmA} \label{thmB}
Let $\fr $ be an NC rational function bounded on $\B^d_{\N}$. Let $\fr (L) := M^L _{\fr}$. Then $\fr (L)$ is a point of continuity of the spectral map $T \mapsto \sigma (T)$, $T \in \scr{L} (\bH ^2 _d )$.
\end{thmA}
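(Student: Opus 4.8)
The spectral map $T\mapsto\sigma(T)$ is always upper semicontinuous on $\scr{L}(\bH^2_d)$, so the content of the statement is its lower semicontinuity at $\fr(L)$. The plan is to verify that $\fr(L)$ satisfies a sufficient condition of Conway--Morrell for being a point of spectral continuity: roughly, that $\sigma(\fr(L))$ has no isolated points and that on every bounded component of $\C\setminus\sigma_e(\fr(L))$ contained in $\sigma(\fr(L))$ the Fredholm index of $\fr(L)-\lambda$ is nonzero. First I would reduce the setup. Since $\fr$ is bounded on $\B^d_{\N}$, Theorem~\ref{thmA} (the implications (vii)$\Rightarrow$(i) and (vii)$\Rightarrow$(vi)) gives $\fr\in\bH^2_d$, a finite minimal realization $(A,b,c)$ with $\mr{spr}(A)<1$, and $\fr\in\cA_d$; in particular $\fr(L)=M^L_{\fr}$ is a norm limit of polynomials in $L_1,\dots,L_d$. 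If $\fr$ is constant the statement is trivial, so assume it is not.

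Next I would show $\sigma(\fr(L))$ has no isolated points. By the results of Subsection~\ref{sec:spectrum}, $\sigma(\fr(L))$ equals the closure $\Sigma$ of $\bigcup_{n\in\N}\bigcup_{X\in\B^d_n}\sigma(\fr(X))$. Since each $\B^d_n$ is path-connected and contains $0$, and the eigenvalues of $\fr(X)$ vary continuously with $X$, every point of this union can be joined within it to $\fr(0)$; hence $\Sigma$ is connected. A compact connected set with more than one point has no isolated points, and $\Sigma$ is not a singleton because $\fr$ is nonconstant.

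Then I would control the Fredholm behaviour. For $\lambda\in\sigma(\fr(L))$ one has $\fr(L)-\lambda=M^L_{\fr-\lambda}$ with $\fr-\lambda\not\equiv 0$, and a nonzero left multiplier of the Fock space is injective (the NC power series ring has no zero divisors), so $\fr(L)-\lambda$ has trivial kernel and, in particular, $\fr(L)$ has no eigenvalues. Consequently, whenever $\lambda\in\sigma(\fr(L))$ and $\fr(L)-\lambda$ is Fredholm it is not invertible yet has zero-dimensional kernel, so its index is strictly negative; thus the index is nonzero on every bounded component of $\C\setminus\sigma_e(\fr(L))$ that meets $\sigma(\fr(L))$, and an isolated point of $\sigma(\fr(L))$ could not be a Riesz point either. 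To pin down $\sigma_e(\fr(L))$ and its continuity at $\fr(L)$ — as the precise Conway--Morrell criterion requires — I would invoke Davidson--Pitts's structure theory of $\bH^\infty_d$ and $\cA_d$: $C^*(I,L_1,\dots,L_d)$ is the Cuntz--Toeplitz algebra, it contains the compacts, and its quotient is the Cuntz algebra $\cO_d$; under the quotient map $\fr(L)$ goes to the image $\fr(s)$ of $\fr$ in the completely isometric copy of $\cA_d$ inside $\cO_d$, so $\sigma_e(\fr(L))=\sigma_{\cO_d}(\fr(s))$, and the Davidson--Pitts analysis identifies this set and shows $\fr(L)$ is a point of continuity of $\sigma_e$. (For $d=1$ one can argue classically instead, since $\fr(L)=M_{\fr}$ is then subnormal and subnormal operators are points of spectral continuity.)

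Putting these together, $\fr(L)$ satisfies the Conway--Morrell sufficient condition and is therefore a point of continuity of $T\mapsto\sigma(T)$ on $\scr{L}(\bH^2_d)$, which is the theorem. The main obstacle is the third step: the no-zero-divisor fact controls the indices cheaply, but transporting the finite-level description of $\sigma(\fr(L))$ from Subsection~\ref{sec:spectrum} to the essential spectrum, computing $\sigma_e(\fr(L))$ inside $\cO_d$, and checking that the resulting spectral picture of $\fr(L)$ (the essential spectrum, its bounded complementary components, the indices on them, and continuity of $\sigma_e$) genuinely matches a sufficient condition in the Conway--Morrell classification is the technical heart, and this is where the Davidson--Pitts results on $\bH^\infty_d$ and $\cA_d$ are essential.
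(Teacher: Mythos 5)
You have the right framework---Conway--Morrell plus Davidson--Pitts plus the finite-spectrum result of Corollary~\ref{cor:spectrum}---and the ingredients you list (connectedness of $\sigma(\fr(L))$, injectivity of nonzero multipliers so that Fredholm indices are $\leq 0$) are exactly what the paper uses. The gap is in which Conway--Morrell test you actually try to verify. Your proposed sufficient condition, ``no isolated points and nonzero index on every bounded component of $\C\setminus\sigma_e(\fr(L))$ contained in $\sigma(\fr(L))$,'' is vacuous here: by Davidson--Pitts, $\sigma_e(f(L))=\sigma(f(L))$ for \emph{every} $f\in\bH^\infty_d$, so $\C\setminus\sigma_e(\fr(L))$ never meets $\sigma(\fr(L))$ and there are no such bounded components; likewise ``no isolated points'' holds for every nonconstant multiplier. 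If this were a sufficient condition, every $f(L)$ with $f\in\bH^\infty_d$ would be a point of spectral continuity, which is not the claim. So this is not the right criterion, and the detour through the Cuntz algebra $\cO_d$ to compute $\sigma_e$ explicitly does not repair it.

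For connected spectrum the relevant Conway--Morrell criterion (Theorem~3.1(c) of their paper) is $\sigma(T)=\overline{\sigma_\pm(T)}$, where $\sigma_\pm(T)$ is the set of $\lambda$ for which $\lambda I-T$ is semi-Fredholm of nonzero index. The step you are missing is the link between the finite spectrum and $\sigma_\pm$: if $\lambda\in\sigma(\fr(Z))$ at some finite level $Z$, then $\lambda I-\fr(Z)$ is singular, hence $\lambda I-\fr(L)$ is not outer; it therefore has a nontrivial inner factor $\theta(L)$, so $\ker\theta(L)^*\neq\{0\}$ and, since $\lambda I-\fr(L)$ is injective, $\ind(\lambda I-\fr(L))<0$. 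This gives $\fs(f(L))\subseteq\overline{\sigma_\pm(f(L))}$ for any multiplier (Lemma~\ref{lem:spec_of_mult}), and is where your injectivity observation is actually leveraged. Combined with Corollary~\ref{cor:spectrum}, which you do invoke, one gets $\sigma(\fr(L))=\fs(\fr(L))\subseteq\overline{\sigma_\pm(\fr(L))}\subseteq\sigma(\fr(L))$, verifying the criterion. Without the ``not outer $\implies$ strictly negative index'' link, your argument does not close.
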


As a consequence of the identifcation of NC rational functions in $\bH ^2 _d$ and NC kernels in Theorem \ref{thmA} above, we obtain that the inner and outer factors of any NC rational function in $\bH ^2 _d$ are rational, and that any rational NC inner function is Blaschke, see Section \ref{ratinout}. That is, NC rational functions in Fock space have no singular inner factor. Sections \ref{ratinout} and \ref{ncpoly} contain further, more detailed results on the inner-outer factorization of NC rational functions and polynomials in the Fock space.  In particular we highlight the following, which is an immediate consequence of Corollary~\ref{NCratouter}:

\begin{thmA}\label{thmC}
  An NC rational function $\fr(\mf{z})\in \bH^2_d$ is outer if and only if $\det \fr(Z)\neq 0$ for all $Z$ in the row ball.
\end{thmA}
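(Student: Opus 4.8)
The plan is to prove both implications using Theorem~\ref{thmA}, which identifies an NC rational $\fr \in \bH^2_d$ with an NC Szeg\"o kernel vector $K\{Z,y,v\}$ for some $Z \in \B^d_N$ (where $N$ is the size of a minimal realization) and, crucially, which places $\fr$ in the NC disk algebra $\cA_d$ so that $\fr(L) = M^L_\fr$ is a bounded left multiplier whose evaluation makes sense on the closed ball $\ov{\B^d_\N}$. Recall that $\fr$ is outer precisely when it is cyclic for the right free shifts $R_k = M^R_{\mf{z}_k}$, equivalently (by Popescu / Davidson--Pitts inner-outer theory) when the inner factor of $\fr$ is trivial. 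The statement to be proved is that this happens if and only if $\det \fr(Z) \neq 0$ for every $Z$ in the row ball.

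For the forward direction (outer $\Rightarrow$ no zeros), I would argue by contraposition. Suppose $\det \fr(Z_0) = 0$ at some $Z_0 \in \B^d_{n}$; then there is a nonzero $v_0 \in \C^n$ with $\fr(Z_0) v_0 = 0$, i.e. the matrix-entry point evaluation functional $\ell_{Z_0, y, v_0}$ annihilates $\fr$ for every $y$. This produces a nonzero NC Szeg\"o kernel vector $K\{Z_0, y, v_0\}$ orthogonal to $\fr$. The key point is that the span of such kernel vectors at a fixed $Z_0$ is invariant under the adjoints $R_k^*$ of the right shifts (this is the standard "co-invariance of point-evaluation kernels" computation, using the NC reproducing property and the NC functional equation for $K$), so its orthogonal complement is a proper $R$-invariant subspace containing $\fr$; hence $\fr$ is not $R$-cyclic and not outer. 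This direction does not really need rationality beyond what is already encoded: it is the general fact that outer functions are zero-free, specialized via Theorem~\ref{thmA} to the fact that a rational $\bH^2_d$ function extends to the closed ball.

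For the reverse direction (no zeros $\Rightarrow$ outer) I would use the minimal realization $\fr = b^* L_A(\mf{z})^{-1} c$ together with the Blaschke--Singular--Outer factorization from \cite{NC-BSO}. Factor $\fr = \Theta \cdot g$ with $\Theta \in \bH^\infty_d$ inner and $g \in \bH^2_d$ outer; one shows $g$ is again rational (this should already be available from the rationality of inner and outer factors proved in Section~\ref{ratinout}, which the excerpt says follows from Theorem~\ref{thmA}). Then $\Theta = \fr \cdot g^{-1}$ as NC functions, and since $\fr$ is zero-free on the row ball and $g$, being outer and rational, is also zero-free there by the already-established direction, the inner factor $\Theta$ is pointwise invertible on all of $\B^d_\N$. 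But a pointwise-invertible inner function has trivial Blaschke part, and a rational inner function has trivial singular part (rational NC inner functions are Blaschke, per Section~\ref{ratinout}); hence $\Theta$ is a unimodular constant and $\fr = (\text{const}) \cdot g$ is outer.

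The main obstacle is the reverse direction, specifically making rigorous the claim that the inner factor $\Theta$ of a zero-free rational $\fr$ is forced to be constant. The subtlety is that "zero-free on the row ball" must be leveraged to kill the Blaschke part, while "rational" must be leveraged to kill the singular part, and these two facts about $\Theta$ come from different places: the zero set of the Blaschke inner factor is exactly the zero set of $\fr$ (this requires knowing that the outer factor $g$ contributes no zeros, which in turn uses the forward direction applied to $g$), and the triviality of the singular part requires the rationality of $\Theta$, which requires knowing inner factors of rationals are rational. So the proof is really an orchestration: forward direction $+$ rationality of inner/outer factors $+$ "rational inner $=$ Blaschke" $+$ "Blaschke inner with no zeros is constant", all of which should be in hand by the point Corollary~\ref{NCratouter} is stated. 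I would present Theorem~\ref{thmC} simply as the clean distillation of that chain, citing Corollary~\ref{NCratouter} for the details.
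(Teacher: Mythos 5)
Your proposal tracks the paper's route closely---both ultimately rest on Corollary~\ref{NCratouter} via the chain that rational inner and outer factors are rational, that a rational inner is Blaschke, and that a Blaschke inner with empty singularity locus is a unimodular constant---but two steps in your sketch do not go through as written. In the forward direction you take $v_0$ with $\fr(Z_0)v_0=0$ and assert that $\bigvee_y K\{Z_0,y,v_0\}$ is $R_k^*$-invariant; but the adjoint formulas are $L_k^* K\{Z,y,v\} = K\{Z, Z_k^* y, v\}$ and $R_k^* K\{Z,y,v\} = K\{Z, y, Z_k v\}$, so fixing $v$ and varying $y$ gives an $L_k^*$-coinvariant span, whereas you need $R_k^*$-coinvariance since outer means $R$-cyclic. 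The fix is to take $y_0$ with $y_0^*\fr(Z_0)=0$---which is precisely the paper's convention in the definition of $\mr{Sing}_n$---and form $\bigvee_v K\{Z_0,y_0,v\}$, which is $R_k^*$-invariant and kills every $R^\alpha\fr$ because $\ip{K\{Z_0,y_0,v\}}{R^\alpha\fr} = y_0^*\fr(Z_0)Z_0^{\alpha^\dag}v = 0$. For a square $\fr(Z_0)$ with vanishing determinant both kernels are nonzero, so the conclusion survives, but the computation as stated does not.

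In the reverse direction there is a genuine gap at the infinite level. Theorem~C hypothesizes $\det\fr(Z)\neq 0$ only on the finite-level ball $\B^d_\N$, but the Blaschke condition on $\Theta$ refers to $\mr{Sing}(\Theta)$, which by definition runs over $n\in\N\cup\{\infty\}$; pointwise invertibility on $\B^d_\N$ does not by itself yield $\mr{Sing}(\Theta)=\emptyset$, hence does not yield $\ran{\Theta(L)}=\bH^2_d$. Bridging this is exactly the role of Proposition~\ref{prop:rational_spectral_radius} and Corollary~\ref{cor:singularity_locus}: one passes to a minimal realization of $\fr^{-1}$ (size at most $N+1$, by \cite{HMS-realize}) and shows that if its joint spectral radius were $\geq 1$, a zero of $\fr$ would already appear at some finite level $\leq N+1$ and norm $\leq 1$; so zero-freeness through level $N+1$ forces the spectral radius strictly below one and emptiness of the full singularity locus. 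Your outline should invoke this reduction explicitly rather than sliding between ``zero-free on $\B^d_\N$'' and ``$\mr{Sing}(\Theta)=\emptyset$.''
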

In the classical one-variable Hardy space $H^2$ in the unit disk, it is obvious that a rational function in the space is cyclic for the unilateral shift if and only if it has no zeroes in $|z|<1$. The theorem just stated can be read as a generalization of this fact (though much less obvious). 

\section{Preliminaries}

Consider the \emph{complex NC universe}, 
$$ \C ^d _\N := \bigsqcup _{n=1} ^\infty \C ^{(n\times n) \cdot d}; \quad \C ^{(n\times n) \cdot d} = \C ^{n\times n} \otimes \C ^{1 \times d}, $$ the disjoint union of all $d-$tuples $X = \left( X _1 , \cdots , X_d \right) $ of $n \times n$ matrices of all finite sizes.  The NC row-ball $\B ^d _\N$ is the unit ball of $\C ^d _\N$ with respect to the row operator space norm on $\C ^d$. It will be occasionally be useful to include the infinite level $\B ^d _\infty$, and to consider the operator unit row-ball:
$$ \B ^d _{\aleph _0} := \B ^d _\N \sqcup \B ^d _\infty. $$ The infinite level is the set of all strict row contractions on a fixed separable Hilbert space.

As described in the introduction, the Fock space, $\bH ^2 _d$, can be defined as a Hilbert space of square-summable power series (as in \cite{Pop-freeholo}) which define locally bounded, hence analytic free NC functions in $\B ^d _\N$ or $\B ^d _{\aleph _0}$ \cite{KVV}. (A free NC function on an NC domain such as $\B ^d _\N$ is any function which respects the grading, direct sums, and the joint similarities which preserve its NC domain.)

Given any matrix $d-$tuple $X = \left( X_1 , \cdots , X _d \right) \in \C ^{(n\times n) \cdot d}$, the \emph{joint or outer spectral radius} of $X$ is: 
$$ \mr{spr} (X) := \lim \sqrt[\leftroot{-2}\uproot{5}2k]{\mr{Ad} ^{(k)} _{X, X^*} (I_n)}; \quad \quad \mr{Ad} _{X, X^*} (P) := X \left(I_d \otimes P \right) X^*. $$ 

\subsection{Domains and realizations of NC rational functions}

As described in the introduction, the domain of any NC rational function, $\mf{r}$, is 
$$ \dom{\mf{r}} := \bigsqcup _n \mr{Dom} _n (\mf{r} ); \quad \mr{Dom} _n (\mf{r} ) := \left\{ \left. X \in \C ^{(n\times n) \cdot d} \right| \ \fr (X) \ \mbox{is defined} \right\}. $$ 
NC rational functions which are regular at the origin, \emph{i.e.} $0 \in \dom{\mf{r}}$ have a powerful realization theory: There is a triple $(A,b,c) \in \C ^{(N\times N) \cdot d} \times \C ^N \times \C ^N$, called a \emph{realization of $\mf{r}$} so that for any $X \in \C ^{(n\times n) \cdot d}$, $$ \mf{r} (X) = b^* L_A  (X) ^{-1} c := b^* \otimes I_n L_A (X) ^{-1} c \otimes I_n, $$ where $L_A$ is the (monic) \emph{linear pencil} defined by:
$$ L_A (X) := I_N \otimes I_n - \sum _{j=1} ^d A_j \otimes X_j. $$ Moreover, $A$ can be assumed to be \emph{minimal} in the sense that $N$ is minimal, and this implies that the realization is both \emph{observable}:
$$ \bigvee b^* A ^\om = \C ^{1\times N},$$  and \emph{controllable} 
$$ \bigvee A^\om c = \C ^N, $$ see \emph{e.g.} \cite[Subsection 3.1.2]{HMS-realize}. The domains of NC rational functions which are regular at $0$ have a convenient description:
\begin{thm*}{ (\cite[Theorem 3.5]{Volcic}, \cite[Theorem 3.1]{KVV-ncrat})}
If $\mf{r}$ is an NC rational function regular at $0$ with minimal realization $(A,b,c)$, then,
$$ \dom{\mf{r}} := \bigsqcup _{n \in \N} \left\{ \left. X \in \C ^{(n\times n) \cdot d} \right| \ \mr{det} L_A (X) \neq 0 \right\}.$$
\end{thm*}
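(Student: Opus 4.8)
The statement is the set equality
\[
\dom{\fr} \;=\; \bigsqcup_{n\in\N}\bigl\{\, X\in\C^{(n\times n)\cdot d} \;:\; \det L_A(X)\neq 0 \,\bigr\},
\]
and the plan is to prove the two inclusions separately. For ``$\subseteq$'', take $X\in\dom{\fr}$ and choose an NC rational expression $\mr{r}\equiv\fr$ with $X\in\dom{\mr{r}}$. Running a recursion on the syntax tree of $\mr{r}$ — the atoms $\mf{z}_j$ and scalars have trivial $1\times 1$ realizations, and there are standard cascade formulas producing a realization of $\mr{r}_1+\mr{r}_2$, $\mr{r}_1\mr{r}_2$, and $\mr{r}_1^{-1}$ (the last when $\mr{r}_1$ is regular at $0$ with $\mr{r}_1(0)\neq 0$), together with the usual trick renormalizing each intermediate pencil to monic form — produces a (generally non-minimal) realization $(\hat A,\hat b,\hat c)$ of $\mr{r}$ with $\dom{\mr{r}}\subseteq\{X:\det L_{\hat A}(X)\neq 0\}$. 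The point of the induction is that the only constraints defining $\dom{\mr{r}}$ are invertibility of the inverted sub-expressions, and each such constraint is recorded as non-vanishing of a block of $\det L_{\hat A}$.

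Next I would reduce $(\hat A,\hat b,\hat c)$ to a minimal realization, first by restricting to the controllable subspace $\mathcal{C}:=\bigvee_{\om\in\F^d}\hat A^{\om}\hat c$ and then by passing to the observable quotient. Since $\mathcal{C}$ (and, in the second step, the unobservable subspace) is invariant under every $\hat A_j$, the pencil $L_{\hat A}(X)$ becomes block triangular in an adapted basis, so $\det L_{\hat A}(X)$ factors as a product over the two diagonal blocks; hence each reduction can only shrink the singularity locus, while the reduced triple still realizes $\fr$. The outcome is some minimal realization, which by uniqueness of minimal realizations is conjugate to $(A,b,c)$ by an invertible scalar matrix, and such a conjugation leaves $\det L_A(\cdot)$ unchanged. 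Therefore $\dom{\mr{r}}\subseteq\{X:\det L_A(X)\neq 0\}$, and taking the union over all $\mr{r}\equiv\fr$ gives the inclusion.

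For the reverse inclusion — the substantive one — fix $X_0$ with $\det L_A(X_0)\neq 0$; I must write down a single NC rational expression equivalent to $\fr$ and defined at $X_0$. The plan is to invert the monic pencil $L_A(\mf{z})=I_N-\sum_j A_j\mf{z}_j$, regarded as an $N\times N$ matrix over the free algebra $\C\langle\mf{z}\rangle$, by block Gaussian elimination: this expresses every entry of $L_A(\mf{z})^{-1}$, hence $b^*L_A(\mf{z})^{-1}c$, as an NC rational expression $\mr{r}$, which agrees with $\fr$ near $0$ — there $L_A(X)^{-1}$ is the convergent Neumann series — and so $\mr{r}\equiv\fr$. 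A fixed elimination order yields an expression whose domain is exactly the set of $X$ at which every pivot encountered is invertible (which forces $\det L_A(X)\neq 0$ via the resulting $LU$-type factorization); it therefore remains to show that, as the elimination order varies and after allowing an arbitrary scalar similarity $A\mapsto UAU^{-1}$ (which preserves both minimality and $\det L_A$), these domains cover all of $\{X:\det L_A(X)\neq 0\}$.

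\textbf{Main obstacle.} This last step is where the real work lies, and it is genuinely delicate: one cannot simply pivot greedily, because an invertible matrix cut into $n\times n$ blocks may possess no invertible block at all — a suitable permutation matrix, so partitioned, already illustrates this. I would handle it either by a genericity argument — for the fixed invertible matrix $L_A(X_0)$, showing that the similarities $U$ making all pivots of the top-down elimination invertible form a non-empty Zariski-open subset of $\GL_N(\C)$ — or, as in \cite{Volcic,KVV-ncrat}, by induction on the realization size $N$, peeling off one controllable/observable coordinate at a time and using that the restriction of a minimal realization to an invariant subspace is again a realization of a rational function with strictly smaller singularity locus. The residual verifications — that the symbolically computed inverse really represents $\fr$, that the monic-pencil renormalizations go through at each stage of the recursion, and that scalar similarity preserves minimality and $\det L_A$ — are routine and I would isolate them as lemmas.
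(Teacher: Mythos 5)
This statement is not proved in the paper; it is quoted verbatim from Vol\v{c}i\v{c} \cite[Theorem~3.5]{Volcic} and Kaliuzhnyi-Verbovetskyi--Vinnikov \cite[Theorem~3.1]{KVV-ncrat}, so there is no in-paper proof to compare against. I will therefore comment directly on your sketch.

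Your decomposition into two inclusions is the right shape, and you are correct that the $\supseteq$ inclusion is the substantial one; you are also correct and honest about where the obstacle lies there (an invertible block matrix need not have any invertible diagonal block, so greedy block elimination can stall). Neither of your proposed fixes is worked out, and the genericity one is not obviously sound as stated: the admissible conjugations of $L_A(X_0)$ are only by matrices of the very special form $S\otimes I_n$ with $S\in\GL_N$, a tiny subgroup of $\GL_{Nn}$, so ``non-empty Zariski-open in $\GL_N$'' requires an argument that you do not give. The induction on $N$ that you mention in passing is in fact the mechanism in \cite{Volcic,KVV-ncrat}; it leans essentially on controllability and observability of the minimal triple to manufacture the pivots, which your sketch never uses.

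There is also a gap in the $\subseteq$ direction that you do not flag. Your plan is: given $X_0\in\dom{\fr}$, pick an expression $\mr{r}\equiv\fr$ with $X_0\in\dom{\mr{r}}$ and run the cascade recursion on its syntax tree to get a monic-pencil realization $(\hat A,\hat b,\hat c)$. But that recursion only works when $\mr{r}$ is regular at $0$ in the strong syntactic sense: every inverted sub-expression must itself be defined at $0$ with nonzero value there, as you note parenthetically. An arbitrary $\mr{r}\equiv\fr$ with $X_0\in\dom{\mr{r}}$ need not satisfy this. For instance $\mr{r}=\mf{z}_1^{-1}\mf{z}_1$ is defined at any invertible $X_0$ and represents the constant function $1$ (which is regular at $0$), yet the sub-expression $\mf{z}_1^{-1}$ has no monic-pencil realization, so the recursion cannot even start. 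You would first need to prove that one can always choose $\mr{r}$ simultaneously defined at $X_0$ and at $0$ (strongly), and that is not automatic; it is part of what the cited theorem establishes. Your observation that restricting to the controllable subspace and quotienting by the unobservable one block-triangularizes the pencil and hence only shrinks the singular locus is correct and is the right tool for passing from a non-minimal to a minimal realization, but it only helps once you actually have a realization of $\mr{r}$ in hand.
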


\subsection{A conjugation on Fock space}

It will be useful to consider the conjugation with respect to the standard orthonormal basis, $\{ L^\alpha 1 | \ \alpha \in \F ^d \}$, on Fock space, $C :\bH ^2 _d \rightarrow \bH ^2 _d$, defined by: $Cf = \ov{f}$, where  $$ f = \sum _{\om \in \F ^d} f_\om L^\om 1 \ \stackrel{C}{\longmapsto} \ov{f} := \sum _\om \ov{f} _\om  L^\om 1. $$ 
\begin{lemma}
The map $C : \bH ^2 _d \rightarrow \bH ^2 _d$ is a conjugation, \emph{i.e.} it is an anti-linear unitary involution. 
\end{lemma}
That is, the anti-linear operator $C$ is bijective, isometric, and $C ^2 = I$.
\begin{lemma}
The conjugation, $C: \bH ^2 _d \rightarrow \bH ^2 _d$ commutes with the left and right free shifts. If $F(L) \in \bH ^\infty _d$ then $C F(L) C =: \ov{F} (L) \in \bH ^\infty _d$ has the same operator norm and if 
$$ F(L) = \sum _\om F_\om L^\om \quad \mbox{then} \quad \ov{F} (L) = \sum _\om \ov{F} _\om L^\om. $$ 
Moreover for any $F,G \in \bH ^\infty _d$, and $f \in \bH ^2 _d$, $C F(L) G(L) C = \ov{F} (L) \ov{G} (L)$ and $C F(L) f = \ov{F} (L) \ov{f}$. 
\end{lemma}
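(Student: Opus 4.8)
The plan is to exploit the fact that, in the standard orthonormal basis $\{L^\om 1 \ | \ \om \in \F ^d\}$, the conjugation $C$ is nothing but entry-wise complex conjugation of coordinates, while the left and right creation operators are represented by $0/1$ matrices. First I would record the elementary observation that if $T \in \scr{L}(\bH ^2 _d)$ has real matrix entries $\ip{L^\alpha 1}{T L^\beta 1} \in \R$ with respect to this basis, then $C T C = T$. Since $L_k (L^\om 1) = L^{k\om} 1$ and $R_k (L^\om 1) = L^{\om k} 1$ are again basis vectors, both $L_k$ and $R_k$ have real (indeed $0/1$) matrix entries, so $C L_k = L_k C$ and $C R_k = R_k C$ for every $k$. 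This is the first assertion of the lemma.

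Next, given $F(L) \in \bH ^\infty _d$, set $\ov{F}(L) := C F(L) C$. This is a genuine bounded linear operator, being the composition of two anti-linear maps and one linear map, and since $C$ is a surjective isometry (previous lemma) conjugation by $C$ preserves the operator norm, so $\|\ov{F}(L)\| = \|F(L)\|$. To see that $\ov{F}(L) \in \bH ^\infty _d$, I would invoke the Davidson--Pitts/Popescu duality identifying the left multiplier algebra with the commutant of the right shift algebra, $\bH ^\infty _d = \{R_1, \dots, R_d\}'$: the operator $F(L)$ commutes with every $R_k$, and by the previous step so does $C$, hence so does $\ov{F}(L) = C F(L) C$; therefore $\ov{F}(L)$ lies in the commutant of $\{R_1,\dots,R_d\}$, i.e.\ in $\bH ^\infty _d$. (Note it is not enough to observe that $\ov{F}(L)$ commutes with the $L_k$, since the commutant of the left shifts is the \emph{right} multiplier algebra; one really must check commutation with the $R_k$.)

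For the Fourier coefficients, recall that every $G \in \bH ^\infty _d$ is determined by the vector $G1 = \sum_\om G_\om L^\om 1 \in \bH ^2 _d$, with $G_\om = \ip{L^\om 1}{G1}$. Applying this to $G = \ov{F}(L)$ and using $C1 = 1$ together with the antilinearity of $C$ and $C L^\om 1 = L^\om 1$, I would compute $\ov{F}(L)1 = C F(L) C 1 = C F(L) 1 = C \sum_\om F_\om L^\om 1 = \sum_\om \ov{F_\om}\, L^\om 1$, so the Fourier coefficients of $\ov{F}(L)$ are exactly the $\ov{F_\om}$, which is the claimed identity $\ov{F}(L) = \sum_\om \ov{F_\om} L^\om$. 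Finally, the last two assertions follow at once from $C^2 = I$: for $F,G \in \bH ^\infty _d$ we get $C F(L) G(L) C = (C F(L) C)(C G(L) C) = \ov{F}(L)\ov{G}(L)$, and for $f \in \bH ^2 _d$ we get $C F(L) f = (C F(L) C)(Cf) = \ov{F}(L)\ov{f}$.

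There is no real obstacle here; the proof is a sequence of elementary manipulations. The only points deserving care are (a) invoking the correct half of the left/right commutant duality, so that commutation with the right creation operators is what gets checked, and (b) interpreting the symbolic expansion $\sum_\om F_\om L^\om$ through the injective ``symbol'' map $G \mapsto G1$ on $\bH ^\infty _d$, which is what makes the Fourier-coefficient computation a genuine identification of operators rather than a merely formal one.
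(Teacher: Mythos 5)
Your proof is correct and fills in the details that the paper's own proof dismisses as ``easily verified''; the paper only spells out the norm identity (and does so by the same underlying observation, that $C$ is an anti-linear isometric bijection). The one genuinely substantive point you add, and correctly, is the justification that $\ov{F}(L)=CF(L)C$ actually lands in $\bH^\infty_d$: invoking the Davidson--Pitts commutant identification $\bH^\infty_d=\{R_1,\dots,R_d\}'$ and checking that $CF(L)C$ commutes with the right creation operators is exactly the right mechanism, and your parenthetical warning that commuting with the $L_k$ alone would prove the wrong containment is a worthwhile caution. The remaining computations (real $0/1$ matrix entries for $L_k,R_k$ giving $CL_kC=L_k$, $CR_kC=R_k$; the Fourier-coefficient identity via the symbol map $G\mapsto G1$; the factorization through $C^2=I$) are all sound and match the spirit of the paper's argument, so no gap remains.
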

Here, recall that any $F(L) \in \bH ^\infty _d$ can be identified with the power series:
$$ F(L) = \sum _{\om \in \F ^d} \hat{F} _\om L^\om; \quad \quad \hat{F} _\om := \ip{L^\om 1}{F(L)1}_{\bH ^2}, $$ in the sense that the Ces\`aro partial sums of this series converge to $F(L)$ in the strong operator topology \cite{DP-inv}.  
\begin{proof}
We have that 
\ba \| F(L) \|  & = &  \sup _{\| f \| _{\bH ^2 }  =1} \| F(L) f \|  \nn \\
& = & \sup \| C F(L) f \| \nn \\
& = & \sup \| \ov{F} (L) \ov{f} \| \nn \\
& = & \sup \| \ov{F} (L) f \|  = \| \ov{F} (L) \|. \nn \ea All of the other properties are easily verified.
\end{proof}
\begin{lemma}
If $f = \Theta (L) F$ is the inner-outer factorization of $f \in \bH ^2 _d$, then $\ov{f} = Cf$ has inner-outer factorization 
$\ov{f} = \ov{\Theta } (L) \ov{F}$. That is, $\Theta (L)$ is inner if and only if $\ov{\Theta} (L)$ is inner and $F \in \bH ^2 _d$ is outer if and only if $\ov{F}$ is outer. 
\end{lemma}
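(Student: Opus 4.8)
The plan is to deduce everything from the two preceding lemmas on the conjugation $C$. First I would observe that the factorization identity itself is immediate: since $\Theta(L) \in \bH^\infty_d$ and $F \in \bH^2_d$, the identity $C\,F(L)f = \ov{F}(L)\ov{f}$ (last Lemma, applied with $F(L)=\Theta(L)$ and $f=F$) gives $\ov{f} = Cf = C\,\Theta(L)F = \ov{\Theta}(L)\,\ov{F}$. So the only content is to check that $\ov{\Theta}(L)$ is inner and $\ov{F}$ is outer.

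For the inner factor, recall $\ov{\Theta}(L) = C\Theta(L)C$ is a genuine (complex-linear) bounded operator, and it lies in $\bH^\infty_d$ with $\|\ov{\Theta}(L)\| = \|\Theta(L)\|$ by the second Lemma. Since $C$ is an anti-unitary involution, one has $(CTC)^* = CT^*C$ for every bounded linear $T$; applying this with $T = \Theta(L)$ and using that $\Theta(L)$ is isometric, $\Theta(L)^*\Theta(L) = I$, yields $\ov{\Theta}(L)^*\ov{\Theta}(L) = C\Theta(L)^*\Theta(L)C = CC = I$. Thus $\ov{\Theta}(L)$ is an isometric left multiplier of $\bH^2_d$, i.e.\ it is inner.

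For the outer factor, recall that $F$ being outer means $F$ is cyclic for the right free shift, i.e.\ $\bigvee_{\om \in \F^d} R^\om F = \bH^2_d$. Because $C$ commutes with each $R_k$ (second Lemma) and is an anti-unitary involution, $C$ maps $R$-invariant subspaces bijectively onto $R$-invariant subspaces; in particular it maps the smallest $R$-invariant subspace containing $F$ onto the smallest $R$-invariant subspace containing $\ov{F}=CF$. Hence $\ov{F}$ is cyclic for $R$ if and only if $F$ is, i.e.\ $\ov{F}$ is outer if and only if $F$ is. The remaining biconditional for inner functions follows the same way (or simply by symmetry, since $C^2 = I$ forces $\ov{\ov{\Theta}} = \Theta$ and $\ov{\ov{F}} = F$, so each reverse implication is the forward one applied to the conjugated object). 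Uniqueness of the NC inner–outer factorization of Popescu and Davidson–Pitts then confirms that $\ov{\Theta}(L)\ov{F}$ is \emph{the} inner–outer factorization of $\ov{f}$.

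I do not expect a genuine obstacle here; the single point deserving care is the behaviour of adjoints under conjugation by the anti-linear map $C$, namely the identity $(CTC)^* = CT^*C$, which is a routine computation using the anti-unitarity relation $\langle Cx, Cy\rangle = \langle y, x\rangle$ together with $C^2 = I$.
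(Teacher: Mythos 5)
Your proof is correct and follows essentially the same route as the paper: both establish that $\ov{\Theta}(L)=C\Theta(L)C$ is isometric from the anti-unitarity of $C$, and both deduce preservation of outerness from the fact that $C$ commutes with the right shifts $R^\om$. The only cosmetic difference is that the paper argues by contrapositive (producing a vector orthogonal to $\bigvee R^\alpha F$ from one orthogonal to $\bigvee R^\alpha \ov F$), while you observe directly that $C$ carries the $R$-cyclic subspace generated by $F$ bijectively onto the one generated by $\ov F$ — the same idea, phrased more structurally.
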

\begin{proof}
Clearly $\ov{\Theta } (L)$ is an isometry since $\ov{\Theta} (L) = C \Theta (L) C $, and $C$ is an anti-linear unitary. Suppose that $\ov{F}$ is not outer. Then there is a $g \in \bH ^2 _d$ so that $g \perp \bigvee R^\alpha F$. (Here recall $R_k := M^R _{\mathfrak{z} _k}$ are the isometric right free shifts.) Then,
\ba 0 & = & \ip{R^\alpha \ov{F} }{g}_{\bH^2} \nn \\
& = & \ip{Cg}{CR^\alpha \ov{F} } _{\bH^2} \quad \quad \mbox{($C$ is a conjugation)} \nn \\
& = & \ip{\ov{g}}{R^\alpha F} _{\bH^2}. \nn \ea This proves that $\ov{g} \perp \bigvee R^\alpha F$ so that $F$ is not outer. 
\end{proof}
For any $n \in \N$, we also define a conjugation, $\cc : \C ^n \rightarrow \C ^n$, with respect to the standard basis, $\{ e_j \} _{j=1} ^n$, 
$$ \cc \sum a_j e_j := \sum \ov{a_j} e_j. $$ For $c \in \C ^n$ we write $\ov{c} := \cc c$ and for $X \in \C ^{n\times n}$ we write $\ov{X} := \cc X \cc$, whose matrix in the standard basis is obtained by entry-wise complex conjugation.

\begin{lemma} \label{Szegocon}
If $Z \in \B ^d _n$ is a strict row contraction, $\ov{Z} = \cc Z \cc$ is also a strict row contraction with $\| \ov{Z} \| = \| Z \|$ and $\mr{spr} (\ov{Z}) = \mr{spr}(Z)$.
\end{lemma}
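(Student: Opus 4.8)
The plan is to reduce all three assertions to the single observation that the coordinatewise conjugation $\cc$ is an isometric anti-linear involution, and that both the row norm and the joint spectral radius of a $d$-tuple are assembled from the matrices $Z_j$, $Z_j^*$ and operator norms in a way that is manifestly invariant under replacing each $Z_j$ by $\cc Z_j \cc$. The computational engine is the elementary list of identities $\cc(ST)\cc = (\cc S\cc)(\cc T\cc)$, $\cc\, T^*\cc = (\cc T\cc)^* = \ov{T}^*$, $\|\cc T\cc\| = \|T\|$, and $\cc I_n\cc = I_n$, each immediate from the definition of $\cc$.

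First I would handle the norm. Since $\ov{Z_j}\,\ov{Z_j}^* = (\cc Z_j \cc)(\cc Z_j^* \cc) = \cc\, Z_j Z_j^*\, \cc$, summing over $j$ gives $\sum_j \ov{Z_j}\,\ov{Z_j}^* = \cc\big(\sum_j Z_j Z_j^*\big)\cc$, so that $\| \ov Z \|^2 = \big\| \sum_j \ov{Z_j}\,\ov{Z_j}^* \big\| = \big\| \sum_j Z_j Z_j^* \big\| = \| Z \|^2 < 1$; in particular $\ov Z \in \B^d_n$. For the joint spectral radius, the same manipulation applied to an arbitrary $P \in \C^{n\times n}$ yields
\[
\mr{Ad}_{\ov Z, \ov Z^*}(P) \;=\; \sum_{j=1}^d \ov{Z_j}\, P\, \ov{Z_j}^* \;=\; \cc\Big(\sum_{j=1}^d Z_j (\cc P \cc) Z_j^*\Big)\cc \;=\; \ov{\,\mr{Ad}_{Z,Z^*}(\ov P)\,}.
\]
Iterating this identity $k$ times and using $\cc I_n \cc = I_n$ gives $\mr{Ad}^{(k)}_{\ov Z, \ov Z^*}(I_n) = \ov{\,\mr{Ad}^{(k)}_{Z,Z^*}(I_n)\,}$, so these operators have equal norm for every $k$; extracting $2k$-th roots and passing to the limit gives $\mr{spr}(\ov Z) = \mr{spr}(Z)$.

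I do not expect a genuine obstacle here: once the conjugation identities are in place the whole argument is a few lines. The only point demanding care is bookkeeping — tracking which copy of $\cc$ acts on which coordinate space, and remembering that $\cc$ is anti-linear, so it must be kept away from any scalar coefficients (it is only ever applied on the matrix side, sandwiching $Z_j$ and $P$, so no spurious conjugates appear). Equivalently, the lemma can be summarized in one sentence: $\ov Z$ is the image of $Z$ under conjugation by the anti-unitary $\cc$, and the maps $W \mapsto \| W \|$ and $W \mapsto \mr{spr}(W)$ are invariant under anti-unitary conjugation.
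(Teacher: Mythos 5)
Your proof is correct, and it is precisely the elementary argument the paper implicitly relies on: the lemma is stated without proof, the authors treating it as immediate from the fact that entrywise conjugation is an anti-unitary involution that commutes (in the sandwich sense) with products and adjoints. Your identities $\ov{Z_j}\,\ov{Z_j}^* = \cc\,Z_jZ_j^*\,\cc$ and $\mr{Ad}^{(k)}_{\ov Z,\ov Z^*}(I_n) = \ov{\mr{Ad}^{(k)}_{Z,Z^*}(I_n)}$, combined with $\|\cc T\cc\| = \|T\|$, give exactly the norm and spectral-radius equalities, and the strict row contraction claim follows since $\sum_j \ov{Z_j}\ov{Z_j}^* = \cc\bigl(\sum_j Z_jZ_j^*\bigr)\cc$ has the same spectrum as $\sum_j Z_jZ_j^*$.
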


\section{Isomorphy of NC Rational functions and kernels in Fock space}

We consider NC rational functions $\fr(\mf{z})$ in $d$ noncommuting variables $\mf{z}_1, \cdots , \mf{z}_d$. Suppose that $\fr$ is defined in the row ball $\B ^d _\N$; since this domain contains the scalar point $0$ it follows that $\fr$ has a minimal realization $(A,b,c)$ of size $N$. Since $\B ^d _\N \subseteq \dom{\mf{r}}$, $L_A(Z)$ is invertible for all $Z$ in the row-ball by \cite[Theorem 3.5]{Volcic}. Since $\fr (Z)$ is a locally bounded (analytic) NC function in $\B ^d _\N$, it has a Taylor-Taylor series at $0$ with non-zero radius of convergence \cite[Chapter VII]{KVV}:
\ba  \fr (Z) & =  & b^* L_A(Z) ^{-1} c \nn \\
& = & \sum _{\om \in \F ^d } b^* A^\om c Z^\om \nn \\
& = & \sum \ipcN{b}{A^\om c} Z^\om. \label{gsum} \ea 

If we further assume that $\fr (Z)$ belongs to the Fock space then the above power series coefficients are square summable, 
$$ \sum _{\om \in \F ^d} \left| \ipcN{b}{A^\om c} \right| ^2 < +\infty, $$ and by \cite[Theorem 1.1]{Pop-freeholo} the above power series of Equation \ref{gsum} converges absolutely in $\B ^d _{\aleph _0}$ and uniformly (in operator-norm) on NC balls of radius $0 \leq r<1$. Assuming from now on that $\fr \in \bH ^2 _d$, we have that by \cite[Theorem 3.5, Theorem 3.10]{Volcic}, that 
$$ \fr _{y,v} (\mf{z} ):= y^* L_A(Z) ^{-1} v \in \mr{Hol} (\B ^d _\N ), $$ 
for any choice of $y,v \in \C ^N$ (since $L_A(Z)$ is invertible in $\B ^d _\N$). 

\subsection{NC Szeg\"{o} kernels are NC rational functions}

\begin{prop} \label{NCszego}
For any $Z \in \B ^d _n$ and $y, v \in \C ^n$, the NC Szeg\"o kernel vector is given by the formula: 
$$ K\{ Z , y , v \} = \sum _{\alpha \in \F ^d} \ipcn{Z^\alpha v}{y} L^\alpha 1. $$
This power series has radius of convergence $$ R \geq \frac{1}{\mr{spr} (Z) } \geq \frac{1}{\| Z \| } > 1. $$
In particular, the partial sums of the series converge uniformly in operator norm on every row ball of radius $r<R$, so that $K\{ Z , y , v \}$ belongs to the NC disk algebra $\A$.

 The image of any NC Szeg\"{o} kernel under the conjugation, $C$, is:
$$ C K \{Z , y , v \}  =  K \{ \cc Z \cc , \cc y , \cc v \} = K \{ \ov{Z} , \ov{y} , \ov{v} \}. $$ Any NC Szeg\"o kernel, $K\{ Z , y ,v \}$ is an NC rational function with (not necessarily minimal) realization $\left( \ov{Z} , \ov{y} , \ov{v} \right)$.  
\end{prop}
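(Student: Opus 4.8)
The plan is to verify the reproducing formula by a direct computation against the standard orthonormal basis, and then to read off the remaining claims from that formula. First I would compute, for a word $\alpha \in \F^d$ and a point $(Z,y,v)$ with $Z \in \B^d_n$ and $y,v \in \C^n$, the action of the point-evaluation functional $\ell_{Z,y,v}$ on the monomial $\mf{z}^\alpha$: since $(\mf{z}^\alpha)(Z) = Z^\alpha$, we get $\ell_{Z,y,v}(\mf{z}^\alpha) = y^* Z^\alpha v$. By definition of the NC Szeg\"o kernel vector we have $\ip{K\{Z,y,v\}}{f}_{\bH^2_d} = y^* f(Z) v$ for all $f$, so taking $f = L^\alpha 1 = \mf{z}^\alpha$ and using that $\{L^\alpha 1\}$ is orthonormal gives $\widehat{(K\{Z,y,v\})}_\alpha = \overline{y^* Z^\alpha v} = \ipcn{Z^\alpha v}{y}$, which is exactly the asserted series. (One should note at this stage that the right-hand side is indeed square-summable: $|\ipcn{Z^\alpha v}{y}| \le \|y\|\,\|Z^\alpha\|\,\|v\|$ and $\sum_\alpha \|Z^\alpha\|^2 \le \sum_\alpha \|Z^\alpha (Z^\alpha)^*\| = \sum_k \|\mr{Ad}^{(k)}_{Z,Z^*}(I_n)\|$, which converges geometrically with ratio $\mr{spr}(Z)^2 < 1$ by the definition of the joint spectral radius; this both justifies that $K\{Z,y,v\}\in\bH^2_d$ and sets up the radius-of-convergence bound.)

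Next I would establish the radius-of-convergence estimate. Summing the squared moduli of the coefficients over all words of length exactly $k$, the same $\mr{Ad}$-bound gives that this block-sum is bounded by $\|y\|^2\|v\|^2 \, \|\mr{Ad}^{(k)}_{Z,Z^*}(I_n)\|$, whose $2k$-th root tends to $\mr{spr}(Z)$; by the Cauchy--Hadamard-type criterion for NC power series (or directly via Popescu's \cite[Theorem 1.1]{Pop-freeholo}), the series converges absolutely and uniformly in operator norm on row balls of radius $r$ for any $r < 1/\mr{spr}(Z)$. Since $\mr{spr}(Z) \le \|Z\| < 1$ for $Z \in \B^d_n$, this radius exceeds $1$, so in particular the partial sums converge uniformly on $\overline{\B^d_\N}$, placing $K\{Z,y,v\}$ in the NC disk algebra $\A = \mr{Alg}(I,L)^{-\|\cdot\|}$ (the partial sums being noncommutative polynomials in $L$).

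For the conjugation statement, I would simply apply $C$ coefficientwise: $C K\{Z,y,v\} = \sum_\alpha \overline{\ipcn{Z^\alpha v}{y}}\, L^\alpha 1 = \sum_\alpha \ipcn{y}{Z^\alpha v}\, L^\alpha 1$, and then observe that $\overline{y^* Z^\alpha v} = (\cc y)^* \overline{Z^\alpha}\, (\cc v) = (\cc y)^* (\cc Z \cc)^\alpha (\cc v)$ since entrywise conjugation is multiplicative on products of matrices; comparing with the series formula for $K\{\ov{Z},\ov{y},\ov{v}\}$ (which makes sense since $\ov{Z}$ is again a strict row contraction by Lemma \ref{Szegocon}) gives the identity. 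Finally, for the realization claim: by the formula just proved, $\ov{K\{Z,y,v\}} = C K\{Z,y,v\}$ has Taylor coefficients $\ipcn{y}{Z^\alpha v} = y^* Z^\alpha v$, which are precisely the coefficients $\ov{y}^* \ov{Z}^\alpha \ov{v}$ appearing in the expansion $\ov{y}^* L_{\ov Z}(X)^{-1} \ov{v} = \sum_\alpha \ov{y}^* \ov{Z}^\alpha \ov{v}\, X^\alpha$ of the NC rational function with realization $(\ov Z, \ov y, \ov v)$ — wait, more carefully: applying $C$ once more, $K\{Z,y,v\} = C(\ov{K\{Z,y,v\}})$, and the coefficients of $K\{Z,y,v\}$ are $y^*(\cc Z\cc)^\alpha$-type expressions matching the realization $(\ov Z,\ov y,\ov v)$ directly, so $K\{Z,y,v\}$ agrees as a formal power series (hence as an NC function on a neighborhood of $0$, hence as an NC rational function) with $b^*L_A(X)^{-1}c$ for $(A,b,c) = (\ov Z, \ov y, \ov v)$. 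I expect the only real subtlety to be bookkeeping: getting the complex conjugates and the transpose/adjoint placements consistent between the ``kernel vector'' side and the ``realization'' side, and confirming that $\mr{spr}(\ov Z) = \mr{spr}(Z) < 1$ (Lemma \ref{Szegocon}) so that $L_{\ov Z}(X)$ is invertible throughout $\B^d_\N$ and the realization genuinely defines an NC rational function regular at $0$; minimality is explicitly not claimed and needs no argument.
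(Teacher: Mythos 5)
Your argument follows essentially the same route as the paper's: compute the radius of convergence via Popescu's Hadamard formula, verify the reproducing property by pairing against the monomial basis, and read off the realization from the geometric series of the resolvent $L_{\ov Z}(\cdot)^{-1}$; the ordering of steps is slightly different (paper does radius first, then reproducing formula; you do the reverse) but this is cosmetic.

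One caveat worth flagging: the parenthetical chain
\[
\sum_\alpha \|Z^\alpha\|^2 \;\le\; \sum_\alpha \|Z^\alpha (Z^\alpha)^*\| \;=\; \sum_k \|\mr{Ad}^{(k)}_{Z,Z^*}(I_n)\|
\]
in your first paragraph is not correct. The operator identity is $\sum_{|\alpha|=k} Z^\alpha (Z^\alpha)^* = \mr{Ad}^{(k)}_{Z,Z^*}(I_n)$, but the norm does not distribute over the sum: for PSD summands one only has $\|\sum_\alpha M_\alpha\| \le \sum_\alpha \|M_\alpha\|$, which is the wrong direction for your bound (take for instance $Z_j = e_j e_j^*$ for $j=1,2$: then $\sum_{|\alpha|=k}\|Z^\alpha(Z^\alpha)^*\|=2$ while $\|\mr{Ad}^{(k)}_{Z,Z^*}(I_2)\|=1$). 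The fix is exactly what you do in your second paragraph, and what the paper does: estimate the block sum $\sum_{|\alpha|=k}|\ipcn{Z^\alpha v}{y}|^2 = y^*\mr{Ad}^{(k)}_{Z,Z^*}(vv^*)\,y \le \|y\|^2\|v\|^2\,\|\mr{Ad}^{(k)}_{Z,Z^*}(I_n)\|$ directly, without detouring through $\sum_\alpha\|Z^\alpha\|^2$. Since this correct estimate is exactly what your Hadamard argument uses (and geometric decay of the coefficients then gives square-summability for free), the overall proof stands once you drop or replace the faulty parenthetical.
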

\begin{proof}
Given any strict row contraction $Z = \left( Z_1 , \cdots , Z_d \right) \in \B ^d _{\aleph _0}$, let us first calculate the radius of convergence of the power series formula for $K \{ Z , y , v \}$, where if $Z \in \B ^d _n$, then $y,v \in \C ^n$ (and we allow $n = \infty$). By \cite[Theorem 1.1]{Pop-freeholo}, the radius of convergence, $R$, of the NC power series
$$ K\{ Z ,y, v \} (W) =  \sum _\om \ipcn{Z^\om v}{y} W^\om, $$ 
is given by the Hadamard formula:
\ba \frac{1}{R} & = & \limsup _{k \rightarrow \infty} \left( \sum _{|\om | = k} \left| \ipcn{Z^\om v}{y} \right| ^2 \right) ^{\frac{1}{2k}} \nn \\
& = & \limsup \left( \sum _{|\om | = k}  \ipcn{y}{Z^\om v v^* (Z^\om ) ^* y}  \right) ^{\frac{1}{2k}} \nn \\
& = & \limsup \left( \ipcn{y}{\mr{Ad} ^{(k)} _{Z,Z^*} (vv^*) y}  \right) ^{\frac{1}{2k}} \nn \\
& \leq & \limsup \left( \| v \| ^2 _{\C ^n} \| y \| ^2 _{\C ^n} \| \mr{Ad} _{Z, Z^*} ^{(k)} (I_n) \|  \right) ^{\frac{1}{2k}} \nn \\
& = & \mr{spr} (Z) \leq \| Z \|  <1. \nn \ea This proves that 
$$ R \geq \frac{1}{\mr{spr} (Z)} \geq \frac{1}{\| Z \|} > 1, $$ so that $K \{ Z, y, v \} \in \mr{Hol} (R \B ^d _\N )$, and the Taylor-Taylor series of $K\{ Z , y, v \}$ at $0 \in \B ^d _1$ converges absolutely and uniformly on any NC row-ball $r \B ^d _\N$ of radius $0< r < R$. This proves, in particular, that $K\{ Z, y , v \} \in \mc{A} _d$. Since $\mc{A} _d \subsetneq \bH ^\infty _d \subsetneq \bH ^2 _d$, $K \{ Z, y ,v \}$ belongs to the Fock space. This can also be checked directly:
\ba \| K \{ Z , y ,v \} \| ^2 _{\bH ^2 _d} & = & \sum _\om \ipcn{y}{Z^\om v} \ipcn{Z^\om v}{y} \nn \\
& = & \sum _{m=0} ^\infty \sum _{|\om | =m} \ipcn{y}{Z^\om v v^* (Z^\om) ^* y} \nn \\
& = & \sum _{m=0} ^\infty \ipcn{y}{\mr{Ad} _{Z,Z^*} ^{(m)} (vv^* ) y} \nn \\
& \leq & \| y \| ^2 _{\C ^n } \| v \| ^2 _{\C ^n} \sum _{m=0} ^\infty \| Z \| ^{2m} \nn \\
& = & \frac{\| y \| ^2 \| v \| ^2}{1 - \| Z \| ^2} < \infty. \nn \ea

We now verify the reproducing formula. Let $\hat{K} \{ Z , y , v \} :=\sum  \ipcn{Z^\alpha v}{y} L^\alpha 1$, and compute:
\ba \ip{\hat{K} \{ Z , y ,v \} }{f}_{\bH ^2 _d} & = & \sum _{\alpha , \beta} \ipcn{y}{Z^\alpha v} \hat{f} _\beta \ip{L^\alpha 1}{L^\beta 1} _{\bH ^2 _d} \nn \\
& = & \sum _\alpha \ipcn{y}{Z^\alpha v} \hat{f} _\alpha \nn \\
& = & \ipcn{y}{\sum \hat{f} _\alpha Z^\alpha v} \nn \\
& = & \ipcn{y}{f(Z) v}. \nn \ea

Verifying the final assertion amounts to recognizing the geometric sum formula: If $K\{Z , y, v \}$ with $Z \in \B ^d _n$ is an NC Szeg\"o kernel, and $W \in \B ^d _m$, then
\ba K \{ Z , y,  v \} (W ) & = & \sum _\alpha \ipcn{Z^\alpha v}{y} W^\alpha \nn \\
& = & \sum \ipcn{\mf{C}y}{\mf{C}Z^\alpha v}W^\alpha \nn \\
& = & \sum \ipcn{\ov{y}}{\ov{Z} ^\alpha \ov{v}} W^\alpha \nn \\
& = & \sum (\ov{y} ^* \otimes I_m ) \ov{Z} ^\alpha \otimes W^\alpha (\ov{v} \otimes I_m ) \nn \\
& = & (\ov{y} ^* \otimes I_m ) \sum _{k=0} ^\infty  \left[ \left( \ov{Z} _1 \otimes I_m , \cdots , \ov{Z} _d \otimes I_m \right)  \bpm  I_n \otimes W_1 \\ \vdots \\ I_n \otimes W_d \epm \right] ^k (\ov{v} \otimes I_m )  \nn \\
& = & \ov{y} ^* L_{\ov{Z}} (W) ^{-1} \ov{v}. \nn \ea 
\end{proof}

\subsection{NC rational functions in Fock space are NC kernels}

If $\fr \in \bH ^2 _d$ is an NC rational function with minimal realization $(A,b,c)$ of size $N$, then $\ofr := C \fr \in \bH ^2 _d$ is also an NC rational function with the same norm and minimal realization $(\ov{A} , \ov{b} , \ov{c} )$. In particular, expanding $\ofr$ in a Taylor-Taylor series at $0 \in \B ^d _1$ yields:
\ba \ofr (Z) & = & \ov{b} ^* L_{\ov{A}} (Z) ^{-1} \ov{c} \nn \\
& = &  \sum _\om \ipcN{A^\om c}{b} Z^\om. \nn \ea 
Observe that $\ofr$ formally resembles an NC Szeg\"o kernel: $ \ofr \sim \wt{K} \{ A , b,  c \}. $ (Here, the tilde denotes that we do not yet know if the formal power series for the NC kernel at $\{ A, b ,c \}$ converges in $\B ^d _\N$, or if it belongs to the Fock space.)  It further follows that the original rational function, $\fr$, resembles the formal NC kernel $\wt{K} \{ \ov{A} , \ov{y} , \ov{v} \}$.

\begin{lemma} \label{evalA} 
Let $\fr$ be a rational function in $\bH ^2_d$ with minimal realization $(A, b, c)$ of size $N$. Then the unital homomorphism from NC polynomials into $\C ^{N \times N}$ defined by $p \mapsto p(A)$ is continuous in the $\bH ^2_d-$norm.  If $y, v$ are any vectors in $\mathbb C^N$ then $\fr _{y,v} (Z) := y^* (I - Z A ) ^{-1} v$ also belongs to $\bH ^2_d$. 
\end{lemma}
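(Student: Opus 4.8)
The plan is to extract everything from \emph{minimality} of the realization $(A,b,c)$. Minimality means $(A,b,c)$ is observable, $\bigvee_{\om\in\F^d}b^*A^\om=\C^{1\times N}$, and controllable, $\bigvee_{\om\in\F^d}A^\om c=\C^N$; since $\C^N$ is finite dimensional I would first fix \emph{finite} sets $\mathcal{O},\mathcal{C}\subseteq\F^d$ with $\{b^*A^\alpha\}_{\alpha\in\mathcal{O}}$ spanning $\C^{1\times N}$ and $\{A^\beta c\}_{\beta\in\mathcal{C}}$ spanning $\C^N$. The only other input is that, by Equation~\eqref{gsum}, the Taylor coefficients of $\fr$ are $\hat\fr_\om=b^*A^\om c$, so the hypothesis $\fr\in\bH^2_d$ says precisely $\sum_{\om\in\F^d}|b^*A^\om c|^2=\|\fr\|^2_{\bH^2_d}<\infty$.

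The workhorse is the family of Fock-space vectors $w_{\alpha,\beta}:=\sum_{\om\in\F^d}(b^*A^{\alpha\om\beta}c)\,L^\om 1$, indexed by pairs of words $\alpha,\beta\in\F^d$. Since $\om\mapsto\alpha\om\beta$ is injective into $\F^d$, we have $\|w_{\alpha,\beta}\|^2_{\bH^2_d}=\sum_\om|b^*A^{\alpha\om\beta}c|^2\le\|\fr\|^2_{\bH^2_d}$, so each $w_{\alpha,\beta}\in\bH^2_d$. For an NC polynomial $p=\sum_\om p_\om\mf{z}^\om$ one reads off the pairing identity $b^*A^\alpha p(A)A^\beta c=\sum_\om p_\om\,b^*A^{\alpha\om\beta}c=\ip{Cw_{\alpha,\beta}}{p}_{\bH^2_d}$, with $C$ the conjugation on $\bH^2_d$, hence $|b^*A^\alpha p(A)A^\beta c|\le\|\fr\|_{\bH^2_d}\|p\|_{\bH^2_d}$ for all $\alpha,\beta$. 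To turn these (finitely many, for $\alpha\in\mathcal{O},\beta\in\mathcal{C}$) scalar estimates into a bound on $\|p(A)\|$, I would note that the linear map $\Lambda:\C^{N\times N}\to\C^{\mathcal{O}\times\mathcal{C}}$, $\Lambda(M)=(b^*A^\alpha MA^\beta c)_{\alpha\in\mathcal{O},\beta\in\mathcal{C}}$, is injective: if $\Lambda(M)=0$, then for each $\alpha$ the row vector $b^*A^\alpha M$ kills the spanning set $\{A^\beta c\}_{\beta\in\mathcal{C}}$ and so vanishes, whence $M^*$ kills the spanning set $\{(A^\alpha)^*b\}_{\alpha\in\mathcal{O}}$ and $M=0$. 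An injective linear map between finite-dimensional spaces is bounded below, so there is $\kappa=\kappa(A,b,c)$ with $\|M\|\le\kappa\max_{\alpha\in\mathcal{O},\beta\in\mathcal{C}}|b^*A^\alpha MA^\beta c|$ for every $M$. Taking $M=p(A)$ yields $\|p(A)\|\le\kappa\|\fr\|_{\bH^2_d}\|p\|_{\bH^2_d}$, the asserted $\bH^2_d$-continuity; by density of the NC polynomials in $\bH^2_d$ the map $p\mapsto p(A)$ then extends to a bounded linear map $\bH^2_d\to\C^{N\times N}$.

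For the final claim I would expand $\fr_{y,v}$ as a formal NC power series, $\fr_{y,v}=\sum_{\om\in\F^d}(y^*A^\om v)\,\mf{z}^\om$ (valid from the Neumann series for $L_A(\cdot)^{-1}$, exactly as in Equation~\eqref{gsum}), and then use the spanning properties to write $y^*=\sum_{\alpha\in\mathcal{O}}\la_\alpha b^*A^\alpha$ and $v=\sum_{\beta\in\mathcal{C}}\mu_\beta A^\beta c$. This gives $y^*A^\om v=\sum_{\alpha,\beta}\la_\alpha\mu_\beta\,b^*A^{\alpha\om\beta}c$, so $\fr_{y,v}=\sum_{\alpha\in\mathcal{O},\,\beta\in\mathcal{C}}\la_\alpha\mu_\beta\,w_{\alpha,\beta}$ is a finite linear combination of elements of $\bH^2_d$ and therefore lies in $\bH^2_d$.

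The one substantive step is the reduction from the scalar estimates on $b^*A^\alpha p(A)A^\beta c$ to a bound on $\|p(A)\|$, i.e.\ the observation that minimality makes $M\mapsto(b^*A^\alpha MA^\beta c)_{\alpha\in\mathcal{O},\beta\in\mathcal{C}}$ an equivalent norm on $\C^{N\times N}$; everything else is bookkeeping in the free monoid and unwinding the definition of the $\bH^2_d$-norm. It is worth stressing that the argument uses neither boundedness of $\fr$ as a multiplier nor any information about $\dom{\fr}$ beyond $\B^d_\N\subseteq\dom{\fr}$, which is what makes it usable as a step toward Theorem~\ref{thmA}.
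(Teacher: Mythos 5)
Your proof is correct, and while it follows the same broad outline as the paper's (exploit minimality plus square-summability of the Taylor coefficients, then upgrade scalar estimates to an operator-norm bound using finite-dimensionality), the middle steps take a genuinely different and arguably cleaner route. The paper invokes observability and controllability to produce NC \emph{polynomials} $\gamma_i,\beta_j$ with $c^*\gamma_i(A)=e_i^*$ and $\beta_j(A)b=e_j$, rewrites $e_i^*p(A)e_j=\ip{\ofr}{\gamma_i p\beta_j}$, and then controls $\|\gamma_i p\beta_j\|_2$ via the left and right \emph{multiplier} norms $\|\gamma_i(L)\|\cdot\|\beta_j(R)\|$. You instead extract finite sets of \emph{words} $\mathcal{O},\mathcal{C}$ witnessing observability and controllability, work with the translated series $w_{\alpha,\beta}=\sum_\om(b^*A^{\alpha\om\beta}c)L^\om 1$, and get $\|w_{\alpha,\beta}\|_2\le\|\fr\|_2$ for free from the injectivity of $\om\mapsto\alpha\om\beta$ — no multiplier-norm bounds are needed at all. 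The passage from the scalar estimates $|b^*A^\alpha p(A)A^\beta c|\le\|\fr\|_2\|p\|_2$ to a bound on $\|p(A)\|$ is then absorbed into the soft observation that the injective linear map $M\mapsto(b^*A^\alpha MA^\beta c)_{\alpha\in\mathcal{O},\beta\in\mathcal{C}}$ on the finite-dimensional space $\C^{N\times N}$ is automatically bounded below. For the second assertion ($\fr_{y,v}\in\bH^2_d$), the paper argues via Riesz representation that $\ofr_{ij}$ is the representing vector for the bounded functional $p\mapsto p(A)_{ij}$, whereas you exhibit $\fr_{y,v}$ directly as a finite linear combination $\sum_{\alpha\in\mathcal{O},\beta\in\mathcal{C}}\la_\alpha\mu_\beta\,w_{\alpha,\beta}$ of elements already known to lie in $\bH^2_d$. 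Both proofs are correct; yours is a bit more elementary in that it avoids multiplier norms and any appeal to Riesz representation, at the cost of being somewhat less explicit about the constant $\kappa$. As you note, neither proof uses boundedness of $\fr$ as a multiplier, which is exactly what makes the lemma usable on the way to Theorem~\ref{thmA}.
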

It follows that the evaluation $p\mapsto p(A)$ has a unique continuous extension to $\bH ^2_d$ which we write $f\mapsto f(A)$. In particular if $f_n$ are NC polynomials and $f_n\rightarrow f$ in the $\bH ^2_d$ norm, then $f_n(A)\to f(A)$ and for all NC polynomials $p,q \in \fp$ we have $(pfq)(A)=p(A)f(A)q(A)$.
\begin{proof}
We first observe that for any NC  polynomial $p(\fz )=\sum p_\om \fz^\om$ we have
$$  \ip{\ofr}{p}_{\bH ^2_d}  =  \sum _{\om \in \F ^d} \ipcN{b}{A^\om c} p _\om =\ipcN{b}{p(A)c}, $$ so that $\ofr$ also acts `like' the formal NC kernel vector $\wt{K} \{A, b, c \}$. Hence by Cauchy-Schwarz
$$ \left| \ipcN{b}{p(A)c} \right| \leq \|p \|_2 \|\ofr \|_2 = \| p \| _2 \| \fr \| _2. $$
Next, let $\{e_j \}$ be the standard basis for $\C ^N$. By the observability and controllability of $(A, b, c)$, there exist systems of NC polynomials $\{\beta_j\}$ and $\{\gamma_j\}$ such that
$$ c^*\gamma_i(A) =e_i^* \quad \text{and}\quad \beta_j(A)b =e_j; \quad 1 \leq j \leq N. $$
Then for any NC polynomial $p\in \fp$,
\ba e_i^*p(A)e_j & = & \ipcN{\gamma_i(A) ^* c}{p(A)\beta_j(A)b} \nn \\
& = & \ip{\ofr}{ \gamma _i p \beta _j}_{\bH ^2 _d}, \nn \ea
  and thus
\ba \left| e_i^*p(A)e_j \right| & \leq &  \|\gamma_i p \beta_j\|_2 \| \fr \|_2 \nn \\
& \leq & \|\ga _i (L) \|  \| \beta _j (R) \|  \|p\|_2\| \fr \|_2, \nn \ea 
so that for all $i, j$, the matrix entry evaluations $p\mapsto p(A)_{ij}$ are continuous for the $\bH ^2_d-$norm. By \cite[Theorem 3.10]{Volcic}, for any $1 \leq i, j \leq N$, the rational function $\ofr _{ij}(\fz )=e_i^*(I-\ov{A}\fz )^{-1}e_i$ is defined in the row ball and the norm estimate above shows that $\ofr _{ij}$ and hence $\fr _{ij}$ belongs to $\bH ^2_d$, since the formal inner product $p(A)_{ij}=\ip{ \ofr _{ij}}{p} _2$ defines a bounded linear functional on $\bH ^2_d$. Finally by taking linear combinations we have $\fr _{y,v} (\fz ) = y^* (I - \fz A ) ^{-1} v  \in \bH ^2 _d$ for all $y,v \in \C ^N$.  
\end{proof}

Let $A  \in \C ^{(n\times n) \cdot d }$ be a $d-$tuple of $n\times n$ matrices. Following \cite{SSS} we say $A$ is \emph{reducible} if it has a non-trivial jointly invariant subspace. If $A$ is not reducible we say it is \emph{irreducible}.
 
\begin{thm} \label{NCreg}
Let $\fr \in \bH ^2 _d$ be an NC rational function in $\bH ^2 _d$ with minimal realization $(A,b,c)$ of size $N$. Then $A$ has joint spectral radius $\mr{spr} (A) <1$ and is jointly similar to a point $W \in \B ^d _N$. If $A$ is irreducible, then one can choose $\| W \| = \mr{spr} (A) < 1$. Moreover, there are vectors $x,u \in \C ^N$ so that $\fr = K \{ W , x , u \}$ is analytic in an NC row-ball of radius $R \geq \| W \|  ^{-1} >1$. 
\end{thm}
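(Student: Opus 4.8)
The plan is to use membership of $\fr$ in the Fock space to force the joint spectral radius of the realization tuple $A$ to be strictly less than $1$, then to conjugate $A$ onto a strict row contraction, and finally to recognize $\fr$ as the associated NC Szeg\"o kernel and invoke Proposition \ref{NCszego}.

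\textbf{Step 1: $\mr{spr}(A)<1$.} By Equation \ref{gsum} the Taylor coefficient of $\fr$ at a word $\om$ is $b^*A^\om c$; more generally, for each $1\le i,j\le N$, Lemma \ref{evalA} gives that $\fr_{e_i,e_j}(Z):=e_i^* L_A(Z)^{-1} e_j=\sum_\om (e_i^*A^\om e_j)\,Z^\om$ belongs to $\bH^2_d$. Hence the norm of the degree-$k$ homogeneous part of $\fr_{e_i,e_j}$, namely $\big(\sum_{|\om|=k}|e_i^*A^\om e_j|^2\big)^{1/2}$, tends to $0$ as $k\to\infty$. Since $\|A^\om\|\le\big(\sum_{i,j}|e_i^*A^\om e_j|^2\big)^{1/2}$ (operator norm bounded by Frobenius norm), summing over $|\om|=k$ gives
$$ \big\|\mr{Ad}^{(k)}_{A,A^*}(I_N)\big\| = \Big\|\sum_{|\om|=k}A^\om (A^\om)^*\Big\| \le \sum_{|\om|=k}\|A^\om\|^2 \le \sum_{i,j=1}^{N}\,\sum_{|\om|=k}|e_i^*A^\om e_j|^2, $$
and the right-hand side tends to $0$ as $k\to\infty$. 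As $\mr{Ad}_{A,A^*}$ is a positive map we have $\|\mr{Ad}^{(k+\ell)}_{A,A^*}(I_N)\|\le\|\mr{Ad}^{(k)}_{A,A^*}(I_N)\|\,\|\mr{Ad}^{(\ell)}_{A,A^*}(I_N)\|$, so the limit defining $\mr{spr}(A)$ equals $\inf_k\|\mr{Ad}^{(k)}_{A,A^*}(I_N)\|^{1/2k}$; picking $k$ with $\|\mr{Ad}^{(k)}_{A,A^*}(I_N)\|<1$ yields $\mr{spr}(A)<1$.

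\textbf{Step 2: similarity onto $\B^d_N$.} Since $\mr{spr}(A)<1$, the series $Q:=\sum_{k\ge0}\mr{Ad}^{(k)}_{A,A^*}(I_N)$ converges in norm to a positive invertible matrix with $Q\ge I_N$ and $\mr{Ad}_{A,A^*}(Q)=Q-I_N<Q$. Put $W:=\big(Q^{-1/2}A_1Q^{1/2},\dots,Q^{-1/2}A_dQ^{1/2}\big)$, which is jointly similar to $A$. Then $\sum_j W_jW_j^*=Q^{-1/2}\mr{Ad}_{A,A^*}(Q)Q^{-1/2}=I_N-Q^{-1}<I_N$, so $W\in\B^d_N$ and $\|W\|^2=1-\|Q\|^{-1}$. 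If in addition $A$ is irreducible, then the positive map $\mr{Ad}_{A,A^*}$ on $\C^{N\times N}$, whose Kraus operators $A_1,\dots,A_d$ have no common nontrivial invariant subspace, is irreducible; by the Perron--Frobenius theorem for irreducible positive maps its spectral radius $\mr{spr}(A)^2$ is an eigenvalue with a strictly positive definite eigenvector $P$, i.e. $\mr{Ad}_{A,A^*}(P)=\mr{spr}(A)^2P$. Running the same conjugation with $P^{1/2}$ in place of $Q^{1/2}$ produces a jointly similar $W$ with $\sum_j W_jW_j^*=\mr{spr}(A)^2I_N$, so that $\|W\|=\mr{spr}(A)$. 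In either case $\|W\|<1$ by Step 1.

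\textbf{Step 3: kernel representation.} Write $A_j=S^{-1}W_jS$ with $S$ the Hermitian positive square root used above ($S=Q^{-1/2}$, or $P^{-1/2}$ in the irreducible case), so $A^\om=S^{-1}W^\om S$ for every $\om$ and hence $b^*A^\om c=x^*W^\om u$ with $x^*:=b^*S^{-1}$, $u:=Sc$. By Equation \ref{gsum} the Taylor coefficients of $\fr$ are $b^*A^\om c=x^*W^\om u$, which by Proposition \ref{NCszego} are precisely the coefficients of the NC Szeg\"o kernel $K\{W,x,u\}$. Since $\fr\in\bH^2_d$ by hypothesis and $K\{W,x,u\}\in\bH^2_d$ by Proposition \ref{NCszego}, and two elements of $\bH^2_d$ with the same coordinates in the orthonormal basis $\{L^\om 1\}$ coincide, we get $\fr=K\{W,x,u\}$. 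Finally, the radius estimate in Proposition \ref{NCszego} shows that $\fr$ is analytic on $R\,\B^d_\N$ for some $R\ge\mr{spr}(W)^{-1}\ge\|W\|^{-1}>1$ (the last inequality because $\|W\|<1$). The core difficulty is Step 1: passing from square-summability of the single scalar sequence $(b^*A^\om c)_\om$ to the operator-norm decay $\|\mr{Ad}^{(k)}_{A,A^*}(I_N)\|\to0$. This relies essentially on minimality of the realization, entering through Lemma \ref{evalA}, which realizes each matrix-entry sequence $(e_i^*A^\om e_j)_\om$ as the Taylor coefficients of a genuine Fock-space function and hence as square-summable. The remaining delicate point is the irreducible case of Step 2, where the \emph{sharp} equality $\|W\|=\mr{spr}(A)$ (rather than just $\|W\|<1$) requires a strictly positive definite Perron eigenvector for $\mr{Ad}_{A,A^*}$, supplied by Perron--Frobenius theory for positive maps.
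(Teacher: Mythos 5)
Your overall strategy tracks the paper's: use minimality plus square-summability to force $\mr{Ad}^{(k)}_{A,A^*}(I_N)\to 0$, conjugate $A$ onto a strict row contraction, and recognize $\fr$ as an NC Szeg\"o kernel via Proposition~\ref{NCszego}. Steps~1 and~2 are correct and in fact somewhat more self-contained than the paper's: where the paper first shows $\mr{Ad}^{(n)}_{A,A^*}(I_N)\to 0$ and then cites the multivariable Rota--Strang theorem \cite{PopRota} to obtain the similarity and \cite[Lemma 2.4]{SSS} for the sharp norm in the irreducible case, you establish $\mr{spr}(A)<1$ directly (Frobenius-norm bound plus submultiplicativity/Fekete), construct the similarity explicitly via $Q=\sum_k \mr{Ad}^{(k)}_{A,A^*}(I_N)$, and reprove the sharp bound from Perron--Frobenius for irreducible positive maps. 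That is a perfectly good finite-dimensional proof of the facts the paper imports.

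The one genuine slip is in Step~3, and it is a conjugation error. By Proposition~\ref{NCszego} the $\om$-th coefficient of $K\{W,x,u\}$ in the basis $\{L^\om 1\}$ is $\ipcn{W^\om u}{x}=u^*(W^\om)^*x=\overline{x^*W^\om u}$, \emph{not} $x^*W^\om u$. By Equation~\ref{gsum}, the $\om$-th coefficient of $\fr$ is $b^*A^\om c=x^*W^\om u$ with your $W,x,u$. These are complex conjugates, not equal, so your identity $\fr=K\{W,x,u\}$ is false as written; the correct statement is $\fr=K\{\ov W,\ov x,\ov u\}$, where $\ov W=\mf{C}W\mf{C}$, $\ov x=\mf{C}x$, $\ov u=\mf{C}u$. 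The paper avoids this by first working with $\ofr=C\fr$, whose coefficients $\overline{b^*A^\om c}$ do match those of $K\{X,x,u\}$, and then applying $C$ again at the end. The conclusion of the theorem is unaffected by your slip, since $\ov W\in\B^d_N$, $\|\ov W\|=\|W\|$, and $\mr{spr}(\ov W)=\mr{spr}(W)$ by Lemma~\ref{Szegocon}; but you should replace $K\{W,x,u\}$ by $K\{\ov W,\ov x,\ov u\}$ (or run the whole of Step~3 for $\ofr$ and conjugate back, as in the paper).
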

\begin{proof}
We have that for any $d-$tuple $Z \in \B ^d _n$, 
$$ \ofr _{y,v } (Z) = \sum _{\om} \ipcN{A^\om v}{y} Z^\om \in \bH ^2 _d, $$ for any choice of $y,v \in \C ^N$ by Lemma \ref{evalA}. Hence,
\ba \infty & > & \| \ofr _{y,v} \| ^2 _2 \nn \\
& = & \sum _\om   \ipcN{y}{A^\om v} \ipcN{A^\om v}{y} \nn \\
& = & \sum _\om \ipcN{y}{A^\om v v^* (A^\om ) ^* y}, \nn \ea for any $y,v \in \C ^N$. Taking $v = e_j$ where $\{ e_j \}$ is the standard basis of $\C ^N$ and summing over $j$ yields:
\ba \infty & > & \sum _{j=1} ^N \sum _\om \ipcN{y}{A^\om e_j e_j ^* (A^\om ) ^* y} \nn \\
& = & \sum _{n=0} ^\infty \sum _{|\om | = n } \ipcN{y}{A^\om I_N (A^\om ) ^* y} \nn \\
& = & \sum _{n=0} ^\infty \ipcN{y}{ \mr{Ad} _{A, A^*} ^{(n)} (I_N )  y}, \nn \ea
where we view $A  = \left( A_1 , \cdots , A_d \right)  \in \C ^{(N\times N) \cdot d}$ as a row $d-$tuple of $N\times N$ matrices. Since this sum is finite, the general term must converge to $0$, 
$$ \ipcN{y}{ \mr{Ad} _{A, A^*} ^{(n)} (I_N )  y} \stackrel{\longrightarrow}{\mbox{\tiny $n \uparrow \infty$ } } 0, $$ for any $y \in \C ^N$. This proves that the positive semi-definite matrices:
$$ \mr{Ad} _{A, A^*} ^{(n)} (I_N) \rightarrow 0, $$ so that 
$A = \left( A_1 , \cdots , A_d \right)$, is a \emph{pure} and finite-dimensional $d-$tuple. By the multi-variable Rota-Strang theorem, \cite[Theorem 3.8]{PopRota} (see also \cite[Proposition 2.3, Remark 2.6]{SSS}), $A$ is jointly similar to a strict row contraction $X \in \B ^d _N$. That is, there is an invertible $S \in \C ^{N \times N}$ so that 
$$ A_k = S X_k S ^{-1}, \quad \quad 1 \leq k \leq d. $$ Hence, for any $Z \in \B ^d _n$ 
\ba \ofr _{b,c} (Z) & = & \sum _{\om \in \F ^d} \ipcN{A^\om c}{b} Z^\om \nn \\
& = & \sum \ipcN{X^\om S^{-1} c }{S^* b} Z^\om \nn \\
& = & K \{ X , x, u \} (Z); \quad \quad x:= S^* b, \ u := S^{-1} c \nn \ea  
By \cite[Lemma 2.4]{SSS}, if $A$ is irreducible then, 
$$ \mr{spr} \left( A \right) =  \min \left\{ \left. \| S ^{-1} A  S \| \right| \ S \in \GL (n) \right\}, $$ so that we can choose $\| X \| = \spr{A} <1$. Either way, since $A$ is jointly similar to the strict row contraction, $X \in \B ^d _\N$, $\spr{A} <1$. Setting $W := \mf{C} X \mf{C}$, Lemma \ref{Szegocon} and Proposition \ref{NCszego}, imply that $\fr = C \ofr = C K \{ W , x , u \} = K \{ \ov{W} , \ov{x} , \ov{u} \}$ is also an NC Szeg\"{o} kernel vector whose Taylor-MacLaurin series has radius of convergence 
$$ R \geq \frac{1}{\| W \|} > 1. $$ In particular, $\fr$ is analytic in an NC row-ball of radius $R>1$.
\end{proof}
\begin{cor} \label{ratkernel}
An NC rational function $\fr$ belongs to $\bH ^2 _d$ if and only if $\fr = K \{Z , y ,v \}$ for some finite point $Z \in \B ^d _N$, $N < + \infty$, and $y,v \in \C ^N$.
\end{cor}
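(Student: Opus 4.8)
The plan is to read this Corollary directly off the two main results of this section: the reverse implication is contained in Proposition~\ref{NCszego}, and the forward implication is contained in Theorem~\ref{NCreg}, so essentially no new work is needed.

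For the direction ``$\Leftarrow$'', suppose $\fr = K\{Z,y,v\}$ for some finite $N$, $Z \in \B^d_N$ and $y,v \in \C^N$. Proposition~\ref{NCszego} records exactly the two facts we need about such a Szeg\"o kernel vector: first, that its Taylor--Taylor series at $0$ has radius of convergence $R \geq \spr{Z}^{-1} > 1$, whence $K\{Z,y,v\} \in \A \subsetneq \bH^\infty_d \subsetneq \bH^2_d$; and second, that $K\{Z,y,v\}$ is itself an NC rational function, with (in general non-minimal) realization $(\ov{Z},\ov{y},\ov{v})$. Thus $\fr$ is an NC rational function lying in the Fock space.

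For the direction ``$\Rightarrow$'', suppose $\fr \in \bH^2_d$ is an NC rational function. Since every element of $\bH^2_d$ is a power series convergent at the origin, $0 \in \dom{\fr}$, so $\fr$ has a minimal realization $(A,b,c)$ of some finite size $N$; Theorem~\ref{NCreg} then applies and produces a point $W \in \B^d_N$ and vectors $x,u \in \C^N$ with $\fr = K\{W,x,u\}$, and taking $Z:=W$, $y:=x$, $v:=u$ completes the argument. I do not expect any genuine obstacle: the content has already been proved in Theorem~\ref{NCreg} and Proposition~\ref{NCszego}, and the only point worth flagging is that the witnessing strict row contraction $Z$ may be taken to live at the \emph{single} finite level $N$ equal to the size of a minimal realization of $\fr$, which is precisely what Theorem~\ref{NCreg} delivers.
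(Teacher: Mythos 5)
Your proposal is correct and matches the paper's intent exactly: the corollary is stated as an immediate consequence of Theorem~\ref{NCreg} (which gives the forward implication, producing a Szeg\"o kernel at a point of level $N$, the size of the minimal realization) and Proposition~\ref{NCszego} (which gives the reverse implication, showing every NC Szeg\"o kernel at a finite point is an NC rational function in $\A \subsetneq \bH^2_d$). No further comment is needed.
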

\begin{remark}
The multivariable Rota-Strang theorem \cite[Theorem 3.8]{PopRota} is proven in a general multi-variable non-commutative context. For alternative proofs of this theorem applied to the special case of $d-$tuples of matrices, see \cite[Section 2]{SSS} and \cite[Theorem 1.7]{Pascoe}.
\end{remark}

\section{Regularity of NC rational functions in Fock space}

In this section, we will study varieties and spectra of NC rational multipliers. It will be convenient to briefly recall the concept of vectorization of matrices and completely bounded maps on matrices. Let $A \in \C^{m\times m}, B \in \C ^{n\times n}$, then $A \otimes B$ is an $mn \times mn$ matrix, but it can also be identified with a linear map on $\C ^{m \times n}$. To see the correspondence, for $Z \in \C ^{n\times m}$, we write $\wvec{Z}$ for the column vector of size $m \cdot n$ obtained by stacking the columns of $Z$ one on top of the other (in order from left to right). That is, dividing $Z \in \C ^{n \times m}$ into $m$ columns, $\mbf{z} _k \in \C ^n$ (see for example \cite[Section 4.2]{HornJohnson})
$$ Z = \bpm  \left. \mbf{z} _1 \right| \cdots \left| \mbf{z} _m \right. \epm \ \mapsto \wvec{Z} = \bpm \mbf{z} _1 \\ \vdots \\ \mbf{z} _m \epm \in \C ^{mn}. $$ By \cite[Lemma 4.3.1]{HornJohnson},
\[
\left( A \otimes B \right) \wvec{Z} = \wvec{B Z A^T}.
\]
This \emph{vectorization map} $\mr{vec} : \C ^{m \times n} \rightarrow \C ^{mn}$, $\mr{vec} (A) := \wvec{A}$, is clearly linear and invertible, and for any linear map $\ell \in \scr{L} (\C ^{m \times n} )$, we define the \emph{matrization} of $\ell$, $\wvec{\ell} \in \C ^{mn \times mn}$ by
$$ \wvec{\ell} \wvec{Z} :=  \wvec{\ell (Z)}, \quad \mbox{\emph{i.e.}} \ \wvec{\ell} = \mr{vec} \circ \ell \circ \mr{vec} ^{-1}. $$ In particular, if $\ell$ is any (completely bounded) linear map on the operator space $\C ^{m \times n}$, 
\be \ell (X) = \sum _{j=1} ^d A_j X B_j; \quad \quad A_j \in \C ^{m\times m}, B_j \in \C ^{n \times n}, \ X \in \C ^{m \times n} \label{cb} \ee then 
$$\wvec{\ell} = \sum B_j^T\otimes A_j. $$ 
The map $\ell \mapsto \wvec{\ell}$ has many nice properties, see \emph{e.g.} \cite[Section 3]{LM-dil} and \cite{Pascoe}. In particular, if we have a linear pencil $L_A(Z) = I_m \otimes I_n - \sum_{j=1}^d  A_j \otimes Z_j$, with $A_j \in \C ^{m\times m}, \, Z_j \in \C ^{n\times n}$, then for any $d$-tuple $Z \in \C ^{(n\times n) \cdot d}$, $L_A (Z) \in \C ^{mn \times mn}$, and it is clear that if we define $\ell \in \scr{L} (\C ^{n \times m} )$ by
\[
\ell (X) := X - \sum_{j=1}^d Z_j X A_j^T; \quad \quad X \in \C ^{n\times m}, 
\]
then $L_A (Z) = \wvec{\ell }$.
%We will be most interested in the case when $m=n$, in which case $A \otimes B$ and $B \otimes A$ are unitarily equivalent via the canonical shuffle. Hence,
Since $A \otimes B$ and $B \otimes A$ are unitarily equivalent via the canonical shuffle,
\[
 L_A (Z) \sim \wvec{\ell _{A,Z}},
\]
where $\ell _{A,Z} \in \scr{L} (\C ^{m \times n} )$ is defined by
$$ \ell _{A,Z} (X) := X - \sum _{j=1} ^d A_j X Z_j ^T; \quad \quad X \in \C ^{m \times n}. $$ 
It is now immediate that $L_A (Z)$ is singular if and 
only if $\ell _{A,Z}$ is.

%Recall from \cite{KlepVolcic-loci}, that a pencil $L_A(z) =  I - \sum_{i=1}^d z_i A_i$ is called irreducible, if $\cA = (A_1,\ldots,A_d)$ generates a simple algebra. Since we are working over $\C$, the only central simple finite-dimensional algebra is $M_n$, thus this condition is equivalent to requiring that $\cA$ is an irreducible $d$-tuple. An NC rational function $r$ is called \textrm{irreducible}, if it has (a necessarily minimal) realization   $r(Z) = b^* L_A(z)^{-1} c$, with $L_A$ irreducible.

\begin{prop} \label{prop:rational_spectral_radius}
%Let $r$ be an irreducible NC rational function. If $\overline{\B_{\N}^d} \subset \dom{r}$, then $\spr(A) < 1$.
Let $\fr$ be an NC rational function with $0 \in \dom{\fr}$. Let $(A,b,c)$ be a minimal realization of $\fr$ of size $N$.  Assume that $\spr{A} > 0$. Then, there exists a point $Z \in \C ^{(N \times N) \cdot d}$, such that $\|Z\| = \frac{1}{\spr{A}}$ and $Z \notin \dom{\fr }$.
\end{prop}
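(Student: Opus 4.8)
The plan is to turn $Z \notin \dom{\fr}$ into a concrete singularity condition and then write down an extremal $Z$ by hand. Set $\rho := \spr{A} > 0$. By the domain theorem for minimal realizations \cite[Theorem 3.5]{Volcic}, a point $Z \in \C^{(N\times N)\cdot d}$ lies outside $\dom{\fr}$ precisely when $L_A(Z) = I_{N^2} - \sum_j A_j \otimes Z_j$ is singular, i.e. when $1 \in \sigma\!\big(\sum_j A_j \otimes Z_j\big)$. I will use two standard facts. First, conjugating $L_A(Z)$ by $S \otimes I_N$ shows $L_A(Z)$ is singular iff $L_{S^{-1}AS}(Z)$ is, while $\|Z\|$ is untouched; so we may freely replace $A$ by any jointly similar tuple. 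Second, the completely positive map $\mr{Ad}_{A,A^*}$ on $\C^{N\times N}$ has spectral radius $\lim_k \|\mr{Ad}^{(k)}_{A,A^*}(I_N)\|^{1/k} = \rho^2$ (using $\|\Psi^k\| = \|\Psi^k(I)\|$ for completely positive $\Psi$), and being positive it has, by Perron--Frobenius, a nonzero positive semidefinite eigenvector $P$: $\sum_j A_j P A_j^* = \rho^2 P$.

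The engine of the proof is the following remark, which I would state as a lemma: given $\rho > 0$, a tuple $C \in \C^{(m\times m)\cdot d}$, and a \emph{positive-definite} $D$ on $\C^m$ with $\sum_j C_j D C_j^* = \rho^2 D$, the point $Z^0_j := \rho^{-2}\,\ov{B_j}$, where $B_j := D^{-1/2} C_j D^{1/2}$, satisfies $\|Z^0\| = 1/\rho$ and makes $L_C(Z^0)$ singular. For the proof: $\sum_j B_j B_j^* = D^{-1/2}\big(\sum_j C_j D C_j^*\big) D^{-1/2} = \rho^2 I_m$, so $B$ has row norm exactly $\rho$ (hence $\|Z^0\| = \rho^{-2}\|\ov{B}\| = 1/\rho$), and $\mr{Ad}_{B,B^*}(I_m) = \rho^2 I_m$, so $\rho^2$ is an eigenvalue of $\mr{Ad}_{B,B^*}$. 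Now $\sum_j B_j \otimes Z^0_j = \rho^{-2}\sum_j B_j \otimes \ov{B_j}$ is, via the canonical shuffle, unitarily equivalent to $\rho^{-2}\sum_j \ov{B_j} \otimes B_j$, which is $\rho^{-2}$ times the matrization of $\mr{Ad}_{B,B^*}$; hence $1 \in \sigma\!\big(\sum_j B_j \otimes Z^0_j\big)$, so $L_B(Z^0)$ — equivalently $L_C(Z^0)$, since $B$ is jointly similar to $C$ — is singular.

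If the Perron eigenvector $P$ is already positive-definite, the lemma with $(C,D,\rho) = (A,P,\spr{A})$ and $m = N$ finishes the proof at once. Otherwise, for $x \in \ker P$ one has $\sum_j \|P^{1/2}A_j^* x\|^2 = \langle \mr{Ad}_{A,A^*}(P)x, x\rangle = \rho^2\langle Px, x\rangle = 0$, so $\ker P$ is jointly $A^*$-invariant and $V := \ran P$ is jointly $A$-invariant. Let $\tilde A$ and $D$ be the compressions of $A$ and $P$ to $V$; then $D$ is positive-definite on $V$, and compressing $\sum_j A_j P A_j^* = \rho^2 P$ gives $\sum_j \tilde A_j D \tilde A_j^* = \rho^2 D$. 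The lemma applied to $(\tilde A, D, \rho)$ yields a point $Z^0$ of size $\dim{V}$ with $\|Z^0\| = 1/\rho$ and $L_{\tilde A}(Z^0)$ singular. Finally I would inflate $Z^0$ to size $N$: writing $\C^N = V \oplus V^\perp$, let $Z_j$ act as $Z^0_j$ on $V$ and as $0$ on $V^\perp$, so $\|Z\| = 1/\rho$; since each $A_j$ is block upper-triangular for this decomposition while $\sum_j A_j \otimes Z_j$ annihilates $\C^N \otimes V^\perp$ and has range in $\C^N \otimes V$, the matrix $L_A(Z)$ is block-triangular with $L_{\tilde A}(Z^0)$ as one diagonal block, whence $\det L_A(Z) = 0$ and $Z \notin \dom{\fr}$.

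I expect the Perron--Frobenius step to be the crux: everything hinges on $\rho^2 = \spr{A}^2$ being \emph{genuinely} an eigenvalue of $\mr{Ad}_{A,A^*}$ (not merely of a spectral value of that modulus), with a positive semidefinite eigenvector, because it is the square root of that eigenvector that conjugates $A$ to a ``Cuntz-like'' tuple $B$ with $\sum_j B_j B_j^* = \rho^2 I$ — row norm exactly $\rho$ — for which the extremal singular point $\rho^{-2}\ov{B}$ is visible by inspection. The secondary nuisance is the rank-deficient case treated in the last paragraph, which forces the descent to $\ran P$ and the block-triangular re-inflation. (A case-free alternative is to solve, for each $\varepsilon > 0$, the Stein-type equation $Q_\varepsilon = \sum_{k \ge 0}\big(\rho^2(1+\varepsilon)\big)^{-k}\mr{Ad}^{(k)}_{A,A^*}(I_N)$, set $Z^{(\varepsilon)}_j := \big(\rho^2(1+\varepsilon)\big)^{-1/2}\big(Q_\varepsilon^{1/2} A_j^* Q_\varepsilon^{-1/2}\big)^T$, verify $\|Z^{(\varepsilon)}\| \le 1$ and $\spr{\sum_j A_j \otimes Z^{(\varepsilon)}_j} = \rho/\sqrt{1+\varepsilon}$, extract a subsequential limit as $\varepsilon \downarrow 0$, and apply the Cauchy--Schwarz inequality for the joint spectral radius to conclude the limiting row norm is exactly $1/\rho$.)
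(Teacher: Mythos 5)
Your proof is correct, and it takes a genuinely different route from the paper's. Both proofs ultimately arrive at the extremal point by conjugating (a piece of) $A$ to a tuple $B$ with $\sum_j B_j B_j^* = \rho^2 I$ and setting $Z_j = \rho^{-2}\ov{B_j}$, and both verify singularity via the vectorization/matrization device, but they reach this normal form differently. The paper first block upper-triangularizes $A$ with irreducible diagonal blocks, invokes \cite[Lemma 3.2]{Selfmaps} to normalize each diagonal block to a scalar multiple of a row co-isometry, builds $Z$ from the dominant block, and exhibits an explicit rank-one kernel vector $P = I_m \oplus 0$ of $\ell_{A,Z}$. You instead go directly to the Perron--Frobenius (Krein--Rutman) eigenvector $P \geq 0$ of the completely positive map $\mr{Ad}_{A,A^*}$, conjugate by $P^{-1/2}$ (descending to $\ran{P}$ and re-inflating in the degenerate case), and recognize $\sum_j B_j \otimes Z^0_j$, after a canonical shuffle, as $\rho^{-2}\,\wvec{\mr{Ad}_{B,B^*}}$, which has $1$ in its spectrum because $\mr{Ad}_{B,B^*}(I) = \rho^2 I$. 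The cited lemma is essentially the irreducible case of the Perron--Frobenius fact you invoke (irreducibility forces the eigenvector to be strictly positive definite), so the two arguments share an engine; yours trades the irreducible decomposition for a compression-and-reinflation step, while the paper's trades the degenerate eigenvector for block triangularization plus an external reference. Both land at the exact critical radius $1/\spr{A}$, and your matrization step is doing the same work that the paper packages as the computation $\ell_{A,Z}[P]=0$, just phrased as an eigenvalue statement rather than an explicit kernel vector.
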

In the above $\spr{A} >0$ implies that $A$ is not jointly nilpotent, and hence $\fr$ is not a free polynomial.
\begin{proof}
%Since $A$ is irreducible, there exists $S \in \GL_n(\C)$, such that $S^{-1} A S = \rho(A) Y$, where $Y Y^* = I_n$ (see \cite[Proposition 4.3]{SSS}). Set $\rho(A) = \rho$ and assume that $\rho \geq 1$. Since applying a similarity to $A$ yields another minimal realization, then we may assume that $A = \rho Y$. Note that since $Y$ is a coisometry, then so is $\overline{Y}$, its entrywise complex conjugate, i.e,
%\[
%I = \overline{I} = \overline{Y Y^*} = \overline{Y} \overline{Y^*} = \overline{Y} \overline{Y}^*.
%\]
%For $X \in \overline{\B_n^d}$, we have that $L_A(X) = I_n \otimes I_n - \sum_{i=1}^n X_i \otimes A_i$. Thus we can treat $L_A(X)$ as a linear operator on $M_n$. For $T \in M_n$, $L_A(X)(T) = T - \sum_{i=1}^d A_i T X_i^T$. Since $\rho \geq 1$, $\frac{1}{\spr} \overline{Y} \in \overline{\fB_d}$. Thus, 
%\[
%L_A(\frac{1}{\spr} \overline{Y})(I) = I - \sum_{i=1}^d Y_i Y_i^* = 0.
%\]
%We conclude that $L_A(\frac{1}{\spr}\overline{Y})$ is singular. By \cite[Theorem 3.10]{Volcic}, we conclude that $\frac{1}{\spr}\overline{Y}$ is not in the domain of $r$, which is a contradiction.

Let $\rho := \spr{A}$. One can apply arbitrary similarities to $A$ to produce a new realization for the same NC rational function, $(A , b ,c ) \mapsto (S ^{-1} A S , S^* b , S^{-1} c)$. Applying a (unitary) similarity, we may assume that $A$ is block upper-triangular, where the blocks, $A^{(j)}$, $1\leq j \leq k$, on the diagonal are irreducible $d$-tuples. By \cite[Lemma 3.2]{Selfmaps} we may apply a subsequent block diagonal similarity, so that
\[
A = \begin{pmatrix} \rho _1 Y ^{(1)} & \star & \star \\ 0 & \ddots & \star \\ 0 & 0 & \rho_k Y ^{(k)} \end{pmatrix},
\]
where for each $1 \leq j \leq k$, we have $Y ^{(j)} (Y ^{(j)}) ^* = I$ and $\rho _j := \spr{A^{(j)}}$.  That is, each $d-$tuple $Y^{(j)}$ is a row co-isometry. Since $\rho = \max\{\rho_1, \ldots, \rho_k\}$, we may apply another similarity and assume that $\rho = \rho_1$. Let $Y := Y ^{(1)}$ and set
\[
Z := \begin{pmatrix} \frac{1}{\rho} \overline{Y} & 0 \\ 0 & 0 \end{pmatrix} \in \C ^{(N\times N)\cdot d}, \] be of the same size as $A$.
Note that since $Y \in \C ^{ (m \times m ) \cdot d }$, for some $m \leq N$ is a row co-isometry, then so is $\overline{Y} = \cc Y \cc$. In particular, $\|Z\| = \frac{1}{\rho}$. Consider the linear map $\ell [ \cdot ] := \ell _{A,Z} [ \cdot ] : \C ^{N\times N} \rightarrow \C ^{N\times N}$ defined as in the preceding discussion, $$ \ell_{A,Z} [X] = X - \sum_{j=1} A_j X Z_j ^T; \quad \quad X \in \C ^{N \times N}. $$ Let $P$ be the matrix with $I_m$ (the size of $Y ^{(1)}$) in the upper left corner and zeroes everywhere else, then
\ba \ell_{A,Z} [P] & = & P - \sum_{j=1}^d A_j P Z ^T  \nn \\
& = & \bpm I_m &  \\ & 0 _{N-m} \epm  -  \sum _{j=1} ^d \bpm \rho Y _j & \\ & 0 _{N-m} \epm  \bpm I_m & \\ & 0_{N-m} \epm \bpm   \frac{1}{\rho} (\ov{Y}  _j) ^T  & \\ & 0_{N-m} \epm  \\
& = & \bpm I_m  - \sum  Y_j Y_j ^* & \\ & 0_{N-m}  \epm \equiv 0. \nn \ea By the previous discussion, since $\ell _{A,Z}$ is singular, so is $L_A (Z)$. Since $0 \in \dom{\fr }$, by \cite[Theorem 3.10]{Volcic}, $\dom{\fr }$ coincides with the complement of the singularity locus of the pencil in its minimal realization. Hence $Z \notin \dom{\fr }$, as desired.
 \end{proof}
 
%\begin{cor} \label{cor:extended_analyticity_irr}
%Let $r$ be an irreducible NC rational function. If $\overline{\B_{\N}^d} \subset \dom{r}$, then $r \in H^{\infty}(\B_{\N}^d)$ and is analytic on a bigger ball.
% \end{cor}
% \begin{proof}
% By Lemma \ref{lem:rational_spectral_radius_irr} we have that the $d$-tuple $A_1,\ldots,A_d$ is similar to a strict row contraction. Thus, applying the similarity, we may assume that $A$ is a strict row contraction of norm $\rho = \rho(A)$. Therefore, 
%\[
%r(z) = b^* L_A(z)^{-1} c = \sum_{n=0}^{\infty} \sum_{|\alpha| = n} \langle A^{\alpha} c, b\rangle z^{\alpha}.
%\]
%In particular, $r$ is a kernel function and thus $r$ is bounded.
% \end{proof}
 
%Let us recall a few more notions from \cite{KlepVolcic-loci}. A domain of a rational function is called co-irreducible, if it is the complement of the determinantal zero locus of an irreducible pencil. By \cite[Proposition 4.4]{KlepVolcic-loci}, for every co irreducible domain $D \subset \C^d_{NC}$, there exist a unique $n \in \N$ and an irreducible pencil $L_A$, of size $n$, such that the set of all nc rational irreducible functions with domain $D$ is:
%\[
%\cR(D) = \left\{ b^* L_A^{-1}(z) c \mid b,c \in \C^n \setminus \{0\} \right\}.
%\] 
%Lastly, we recall \cite[Theorem 4.6]{KlepVolcic-loci}. Given an nc rational function $r$ with $0 \in \dom{r}$, $\dom{r} = D_1 \cap \cdots \cap D_k$, for some co-irreducible domains $D_1,\ldots, D_k$. Moreover, $r$ is a polynomial in $\{z_1,\ldots,z_d\} \cup \cR(D_1) \cup \cdots \cup \cR(D_k)$.

\begin{cor} \label{cor:extended_analyticity}
Let $\fr$ be an NC rational function, such that $\overline{\B_{\N}^d} \subseteq \dom{\fr }$. Then $\fr $ is bounded on $\B ^d _\N$ and analytic in $r \B ^d _\N$ for some $r>1$.
\end{cor}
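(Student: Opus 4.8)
Since $0 \in \overline{\B^d_\N} \subseteq \dom{\fr}$, the function $\fr$ is regular at the origin and hence has a minimal realization $(A,b,c)$, say of size $N$. The plan is to first extract from the hypothesis that the joint spectral radius of $A$ satisfies $\spr{A} < 1$, and then to conclude by rerunning the argument in the proof of Theorem~\ref{NCreg}. For the reduction to $\spr{A}<1$ I would distinguish two cases. If $\spr{A} = 0$, then $A$ is jointly nilpotent and $\fr$ is an NC polynomial; here the conclusion is immediate, since an NC polynomial is an entire NC function and $\|\fr(X)\| \le \sum_\om |\hat{\fr}_\om|\,\|X\|^{|\om|}$ is a finite sum, uniformly bounded on $\|X\|\le 1$. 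If $\spr{A} > 0$, I would invoke Proposition~\ref{prop:rational_spectral_radius}: it produces a point $Z \in \C^{(N\times N)\cdot d}$ with $\|Z\| = 1/\spr{A}$ and $Z \notin \dom{\fr}$. Since $\overline{\B^d_\N} \subseteq \dom{\fr}$, such a $Z$ cannot lie in the closed row--ball, which forces $1/\spr{A} > 1$, i.e.\ $\spr{A} < 1$.

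For the second step, note that $\spr{A} < 1$ forces $A$ to be a \emph{pure} $d$--tuple: the scalars $a_n := \|\mr{Ad}^{(n)}_{A,A^*}(I_N)\|$ are submultiplicative, since positivity (monotonicity) of $\mr{Ad}_{A,A^*}$ gives $\mr{Ad}^{(n+m)}_{A,A^*}(I_N) \le a_m\,\mr{Ad}^{(n)}_{A,A^*}(I_N) \le a_n a_m I_N$, whence $a_n^{1/n} \to \inf_n a_n^{1/n} = \spr{A}^2 < 1$ and $\mr{Ad}^{(n)}_{A,A^*}(I_N) \to 0$. By the multivariable Rota--Strang theorem (used exactly as in the proof of Theorem~\ref{NCreg}) there is an invertible $S$ with $A_j = S X_j S^{-1}$ for a strict row contraction $X = (X_1,\dots,X_d) \in \B^d_N$; substituting into the realization gives $\fr(Z) = b^* L_A(Z)^{-1} c = (S^*b)^* L_X(Z)^{-1}(S^{-1}c)$, and recognizing the associated geometric series precisely as in the proof of Proposition~\ref{NCszego} identifies $\fr = K\{\ov{X},\ov{S^*b},\ov{S^{-1}c}\}$ as an NC Szeg\"o kernel vector, where $\ov{X} \in \B^d_N$ by Lemma~\ref{Szegocon}. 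Proposition~\ref{NCszego} then shows $\fr \in \cA_d \subseteq \bH^\infty_d$, so $\fr$ is (uniformly) bounded on $\B^d_\N$, and that its Taylor--Taylor series converges uniformly on $r\B^d_\N$ for every $r < R$ with $R \ge 1/\|\ov{X}\| = 1/\|X\| > 1$; in particular $\fr$ is analytic on $r\B^d_\N$ for some $r>1$.

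I expect the only genuinely load-bearing point to be the reduction to $\spr{A}<1$, but this is exactly what Proposition~\ref{prop:rational_spectral_radius} (stated immediately above) is engineered to supply, together with the trivial observation that having the \emph{closed} ball inside $\dom{\fr}$ rules out the offending point it produces; everything after that is a verbatim repetition of the relevant portion of the proof of Theorem~\ref{NCreg}. As an alternative to the purity/Rota--Strang route, one could instead bound $\|\fr\|_{\bH^2_d}^2 = \sum_m \sum_{|\om|=m}|b^*A^\om c|^2 \le \|b\|^2\|c\|^2\sum_m \big\|\sum_{|\om|=m}(A^\om)^*A^\om\big\|$, note that $\sum_{|\om|=m}(A^\om)^*A^\om$ is the $m$-th $\mr{Ad}$-iterate of the adjoint tuple $(A_1^*,\dots,A_d^*)$, whose outer spectral radius also equals $\spr{A} < 1$, deduce $\fr \in \bH^2_d$, and then apply Theorem~\ref{NCreg} (or Corollary~\ref{ratkernel}) and Proposition~\ref{NCszego}.
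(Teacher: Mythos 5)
Your proof is correct and follows the same route as the paper: split on $\spr{A}=0$ versus $\spr{A}>0$, apply Proposition~\ref{prop:rational_spectral_radius} together with the hypothesis $\overline{\B^d_\N}\subseteq\dom{\fr}$ to force $\spr{A}<1$, then pass via Rota--Strang and the Szeg\H{o}-kernel identification to get membership in $\cA_d$ and analyticity across the boundary. You have merely spelled out the submultiplicativity and geometric-series details that the paper leaves implicit in the phrase ``and $\fr$ is an NC kernel in the Fock space.''
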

\begin{proof}
Let $(A,b,c)$ be a minimal realization of $\fr$. If $\spr{A} = 0$, then $A$ is jointly nilpotent, $\fr $ is a polynomial and we are done. Therefore, we may assume that $\mr{spr}(A) > 0$. By Proposition \ref{prop:rational_spectral_radius}, there exists $Z \notin \dom{\fr }$, such that $\|Z\| = \frac{1}{\mr{spr}(A)}$. However, $\overline{\B_{\N}^d} \subset \dom{\fr }$. Thus, $\mr{spr}(A) < 1$ and $\fr $ is an NC kernel in the Fock space. 

%By \cite[Theorem 4.6]{KlepVolcic-loci} $r$ is a polynomials of the variables and the irreducible nc rational functions in $\cR(D_1) \cup \cdots \cR(D_k)$, where $\dom{r} = D_1 \cap \cdots \cap D_k$, with $D_1,\ldots,D_K$ irreducible. By Corollary \ref{cor:extended_analyticity_irr}, we know that for every $1 \leq j \leq k$ and every $q \in \cR(D_j)$, we have that $\overline{\B_{\N}^d} \subset D_j$ and thus $q$ is bounded on $\fB_d$ and is analytic in a bigger ball. Since $r$ is a polynomial in bounded functions, it is bounded. Furthermore, since each $D_j$ contains a $\rho_j \B_{\N}^d$ for some $r_j > 1$, then $\rho \B_{\N}^d \subset \dom{r}$, where $\rho = \min\{\rho_j \mid 1 \leq j \leq k\}$.
\end{proof}

\begin{cor} \label{cor:char_bdd_rat}
An NC rational function $\fr$ belongs to $\bH_d^{\infty}$ if and only if there exists $r >1$, such that $r\B_{\N}^d \subset \dom{\fr }$.
\end{cor}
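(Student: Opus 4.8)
The plan is to prove both implications by assembling results already established. For the forward direction, suppose $\fr \in \bH^\infty_d$. Then in particular $\fr \in \bH^2_d$, so by Theorem \ref{NCreg} (or Corollary \ref{ratkernel}), $\fr = K\{W,x,u\}$ is an NC Szeg\"o kernel vector for some finite point $W \in \B^d_N$; equivalently, its minimal realization $(A,b,c)$ satisfies $\spr{A} < 1$. By Proposition \ref{NCszego}, the Taylor--Taylor series of $\fr$ at $0$ has radius of convergence $R \geq 1/\|W\| > 1$, and by \cite[Theorem 3.5]{Volcic} the domain of $\fr$ is exactly the complement of the singularity locus of the pencil $L_A$. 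Setting $r := 1/\spr{A} > 1$, the Cauchy--Schwarz estimate for the joint spectral radius (as used in the proof of Corollary \ref{cor:extended_analyticity}, or alternatively Proposition \ref{prop:rational_spectral_radius} which shows the first singular point of the pencil occurs at norm exactly $1/\spr{A}$) guarantees that $L_A(Z)$ is invertible whenever $\|Z\| < 1/\spr{A} = r$, hence $r\B^d_\N \subset \dom{\fr}$.

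For the converse, suppose there exists $r > 1$ with $r\B^d_\N \subset \dom{\fr}$. In particular $\overline{\B^d_\N} \subset \dom{\fr}$, so Corollary \ref{cor:extended_analyticity} applies: $\fr$ is bounded on $\B^d_\N$ and is an NC kernel in the Fock space, i.e. $\fr \in \bH^2_d$. It remains to upgrade ``bounded on $\B^d_\N$ and in $\bH^2_d$'' to ``$\fr \in \bH^\infty_d$''. By Theorem \ref{NCreg}, $\fr = K\{\ov{W},\ov{x},\ov{u}\}$ is an NC Szeg\"o kernel vector with radius of convergence $R > 1$, so by Proposition \ref{NCszego} the Ces\`aro (in fact the ordinary) partial sums of its power series converge uniformly in operator norm on $\overline{\B^d_\N}$; this places $\fr$ in the NC disk algebra $\cA_d$, and since $\cA_d \subset \bH^\infty_d$, we conclude $\fr \in \bH^\infty_d$.

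The main point to be careful about is the converse direction: one must not merely invoke that $\fr$ is a locally bounded NC function on $\B^d_\N$, since being in $\bH^\infty_d$ is genuinely stronger than being pointwise bounded on each level $\B^d_n$ — one needs the uniform operator-norm bound and membership in $\bH^2_d$. Both are delivered by the chain Corollary \ref{cor:extended_analyticity} $\Rightarrow$ Theorem \ref{NCreg} $\Rightarrow$ Proposition \ref{NCszego}, which identifies $\fr$ as an NC Szeg\"o kernel whose series converges uniformly on a ball strictly larger than $\overline{\B^d_\N}$, hence as an element of $\cA_d \subset \bH^\infty_d$. (Alternatively, one can cite the equivalence of (iv) and (vii) in Theorem \ref{thmA}, but since this corollary is part of the proof machinery leading to that theorem, the self-contained argument above is preferable.)
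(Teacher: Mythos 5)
Your argument is correct and follows the same route as the paper: the converse direction ($r\B^d_\N \subset \dom{\fr}$ for some $r>1$ implies $\fr \in \bH^\infty_d$) is Corollary~\ref{cor:extended_analyticity}, and the forward direction is precisely the ``moreover'' clause of Theorem~\ref{NCreg}, which in turn rests on the radius-of-convergence bound and geometric-sum formula in Proposition~\ref{NCszego}. One misattribution in your forward direction is worth flagging: you invoke a ``Cauchy--Schwarz estimate for the joint spectral radius'' as though it were used in the proof of Corollary~\ref{cor:extended_analyticity}, but no such estimate appears in the paper (that corollary instead argues by contrapositive through Proposition~\ref{prop:rational_spectral_radius}); and Proposition~\ref{prop:rational_spectral_radius} itself only exhibits \emph{one} singular point of the pencil at norm $1/\mr{spr}(A)$ --- it does not rule out singularities at smaller norms, so it does not by itself give invertibility of $L_A(Z)$ on the ball of radius $1/\mr{spr}(A)$. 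Fortunately neither citation is needed: the bound $R \geq 1/\|W\| > 1$ from Proposition~\ref{NCszego}, together with its geometric-sum computation exhibiting $(\ov{W}, \ov{x}, \ov{u})$ as a realization of size $N$ (hence minimal, hence with pencil nonsingular exactly on $\dom{\fr}$), already shows that $r\B^d_\N \subset \dom{\fr}$ for $r = 1/\|W\| > 1$, which is all the corollary asks for.
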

\begin{proof}
One direction is Corollary \ref{cor:extended_analyticity} and the other is the ``moreover'' statement in Theorem~\ref{NCreg}.  %Corollary \ref{cor:boundary-regularity}.
\end{proof}

%%%%%%%%%%%%%%%%%%%%%%%%%%%%%%%%%%%%%%%%%%%%%%%%%%%

\iffalse

%%%%%%%%%%%%%%%%%%%%%%%%%%%%%%%%%%%%%%%%%%%%%%%%%%%

In summary, combining the above with our previous results, we have completed the proof of the main theorem which we repeat below:

\setcounter{thmA}{0}

\begin{thmA} 
Let $\mf{r}$ be an NC rational function with minimal realization $(A,b,c)$ of size $N$. Then the following are equivalent.
\begin{itemize}

\item[(i)] $\mf{r} \in \bH ^2_d$.

\item[(ii)] $\mf{r} \in \bH ^\infty _d$.

\item[(iii)] $\mf{r} \in \cA_d := \mr{Alg} (I, L) ^{- \| \cdot \| }$, the \emph{NC disk algebra}.

\item[(iv)] $\overline{\B^d_{\N}} \subset \dom{\mf{r} }$

\item[(v)] There exists $r > 1$, such that $r \B^d_{\N} \subset \dom{\mf{r}}$.

\item[(vi)] The joint spectral radius, $\mr{spr} (A)$, of $A$ is $<1$.

\item[(vii)] $\mf{r} = K \{ Z, y , v \}$ is an NC Szeg\"o kernel vector for some $Z \in \B ^d _N$ and $y, v \in \C ^N$.
\end{itemize}
\end{thmA}

%%%%%%%%%%%%%%%%%%%%%%%%%%%%%%%%%%%%%%%%%%%%%%%%%%%%

\fi

%%%%%%%%%%%%%%%%%%%%%%%%%%%%%%%%%%%%%%%%%%%%%%%%%%%%

We may now combine the above results to prove Theorem~\ref{thmA}:
\begin{proof}[Proof of Theorem A]
The implications (i)$\implies$(ii), (i)$\implies$(iii), and (i)$\implies$(iv) are contained in Theorem~\ref{NCreg}. On the other hand (ii)$\implies$(i) is trivial and (iii)$\implies$(ii) follows from the Rota-Strang theorem. Thus (i), (ii), and (iii) are equivalent, and each implies (iv). Next, (iv)$\implies$(v) is trivial, and (v)$\implies$(iii) follows from Proposition~\ref{prop:rational_spectral_radius}, so (i) through (v) are equivalent. Finally, (ii)$\implies$(vi) is contained in Proposition~\ref{NCszego}, and (vi)$\implies$(vii) and (vii)$\implies$(i) are trivial. This completes the proof. 
\end{proof}

\begin{eg}\label{eg:polydisk-fail}  The implication (vii)$\implies$(v), which says that any rational function bounded in the row ball is continuous up to the boundary (and then in fact analytic across the boundary, by (iv)), seems special to the row ball and fails, for example, in the NC polydisk. The NC polydisk, for $d\geq 2$, is the NC domain which at level $n$ is the domain $\D_n^d$ of all $d$-tuples of strictly contractive matrices:
  \[
    \D_n^d=\{ \left. (Z_1, \dots, Z_d) \, \right| \|Z_j\|<1 \text{ for all } j=1, \dots, d\}.
  \]
  A counterexample may be constructed in two variables as follows: the NC function
  \[
    f(Z,W) = \frac12 \left[ (I+Z)(I-Z)^{-1} +(I+W)(I-W)^{-1}\right]
  \]
  has $\re{f(Z,W)} \geq 0$ for all $(Z,W)\in \D_n^2$, at all levels $n$. It follows that its inverse Cayley transform
  \[
    g(Z,W) = (f(Z,W)-I)(f(Z,W)+I)^{-1}
  \]
  is bounded by $1$ at all levels. But at level $n=1$, a quick calculation shows that $g$ is the rational inner function
  \[
    g(z,w) = \frac{z+w-2zw}{2-z-w}
  \]
  in the bidisk $\mathbb D^2$, which does not extend continously from $\mathbb D^2$ to the boundary point $(1,1)$.

  The matricial character of the rational functions is also essential. If we look at, say, level $2$ of the row ball in $2$ dimensions, identified with the domain in $\mathbb C^8$
  \[
    \Omega =\left\{ \left. \left(Z=\begin{pmatrix} z_1 & z_2\\ z_3 & z_4\end{pmatrix}, W=\begin{pmatrix} w_1 & w_2\\ w_3 & w_4\end{pmatrix} \right) \, \right| \, \|ZZ^*+WW^*\|<1\right\},
  \]
  there will exist rational functions of the $8$ complex variables $z_1, ... ,z_4, w_1, \dots, w_4$ which are bounded in $\Omega$, but do not extend continuously to the boundary. An example is
  \[
    g(z_1, z_2, z_3, z_4, w_1, w_2, w_3, w_4) = \frac{z_1+w_4-2z_1w_4}{2-z_1-w_4}
  \]
  which is bounded by $1$ in $\Omega$, but does not extend continuously from $\Omega$ to the boundary point $Z=\bsm 1 & 0 \\ 0 & 0\esm, W=\bsm 0 & 0  \\ 0 & 1\esm$.

\end{eg}

\subsection{Singularity loci and spectra of NC rational functions} \label{sec:spectrum}

Let $\fr$ be an NC rational function. We would like to understand the NC variety of $\fr$ and to determine when it intersects $\B^d_{\N}$. Here, recall that the \emph{singularity locus} or (left) \emph{NC variety} of any $f \in \bH ^2 _d$ is:

$$ \mr{Sing}  (\fr ) = \bigsqcup_{n \in \N \cup \{ \infty \}}  \mr{Sing} _n (f); \quad \quad 
\mr{Sing} _n (f) := \left\{ \left. (Z,y) \in \B^d_n \times \C^n \setminus \{0\} \right| y^* f(Z) = 0 \right\},  $$ see \cite[Definition 3.2]{NC-BSO}.
In particular, if $(Z,y) \in \mr{Sing} _n (f)$ for some $n< + \infty$, then $f(Z)$ is a singular matrix.

\begin{cor} \label{cor:singularity_locus}
Let $\fr \in \bH_d^{\infty}$ be an NC rational function with minimal realization of size $N$.
\begin{itemize}

\item[(i)] If $\det \fr (Z) \neq 0$, for every $Z \in \overline{\B^d_k}$, for $k \leq N+1$, then $\fr (L)$ is invertible.

\item[(ii)] If $\det \fr (Z) \neq 0$, for every $Z \in \B^d_k$, for $k \leq N+1$, then $\sing{\fr } = \emptyset$.

\end{itemize}
\end{cor}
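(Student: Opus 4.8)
The plan is to deduce both statements from the regularity theory of the reciprocal $1/\fr$, through Proposition~\ref{prop:rational_spectral_radius} and Theorem~\ref{thmA}. The first observation is that $\fr(L)=M^L_\fr$ is invertible in $\scr{L}(\bH ^2 _d )$ if and only if $1/\fr\in\bH ^\infty _d$: since $M^L_\fr$ commutes with the right free shifts, so does its inverse, which therefore lies in the commutant of $\{R_1,\dots,R_d\}$ and is thus a left multiplier $M^L_g$ with $g\in\bH ^\infty _d$ by the Davidson--Pitts commutant theorem \cite{DP-inv}; the identity $M^L_gM^L_\fr=I$ then forces $g\fr=1$, i.e.\ $g=1/\fr$, and the converse is immediate. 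Similarly, $\sing{\fr}=\emptyset$ is equivalent to $\fr(Z)$ being invertible at every point of $\B ^d _{\aleph _0}$, which holds whenever $\B ^d _{\aleph _0}\subseteq\dom{1/\fr}$, because on that set $\fr(Z)(1/\fr)(Z)=I$. So both parts reduce to controlling $\dom{1/\fr}$, and, via Vol\v{c}i\'c's domain theorem \cite{Volcic}, to the location of singularities of a pencil in a realization of $1/\fr$.

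Next I would note that at level $1$ the point $0$ lies in the ball, so both hypotheses force $\fr(0)=b^*c\neq 0$ and $1/\fr$ is an NC rational function regular at $0$. A Schur-complement computation with the $(N+1)\times(N+1)$ block pencil
\[
M(Z)=\begin{pmatrix} L_A(Z) & c\otimes I \\ -\,b^*\otimes I & 0 \end{pmatrix},
\]
whose constant term $M(0)$ is invertible precisely because $b^*c\neq 0$, shows that $M(0)^{-1}M(Z)$ is a monic linear pencil $\tilde L$ of size $N+1$ with $1/\fr(Z)=\tilde b^*\tilde L(Z)^{-1}\tilde c$ for suitable $\tilde b,\tilde c\in\C^{N+1}$. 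Hence the minimal realization $(A',b',c')$ of $1/\fr$ has some size $M\le N+1$. If $1/\fr$ is a free polynomial then $1/\fr\in\bH ^\infty _d$ and both conclusions follow at once, so I assume $\spr{A'}>0$ from now on, and apply Proposition~\ref{prop:rational_spectral_radius} to $1/\fr$: it produces a point $Z_0$ at level $M\le N+1$ with $\|Z_0\|=1/\spr{A'}$ and $Z_0\notin\dom{1/\fr}$. Since $\fr\in\bH ^\infty _d$, Theorem~\ref{thmA} gives $\overline{\B^d_{\N}}\subseteq\dom{\fr}$, and for $Z\in\dom{\fr}$ one has $Z\notin\dom{1/\fr}$ exactly when $\det\fr(Z)=0$ (if both are defined at $Z$, their product is $I$). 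For (i): if $\spr{A'}\ge 1$ then $\|Z_0\|\le 1$, so $Z_0$ is a point of the closed ball at a level $\le N+1$ with $\det\fr(Z_0)=0$, contradicting the hypothesis; hence $\spr{A'}<1$, whence $1/\fr\in\bH ^\infty _d$ by Theorem~\ref{thmA} and $\fr(L)$ is invertible. For (ii): if $\spr{A'}>1$ then $\|Z_0\|<1$, so $Z_0$ is a point of the open ball at a level $\le N+1$ with $\det\fr(Z_0)=0$, again contradicting the hypothesis; hence $\spr{A'}\le 1$.

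To complete (ii) I must still exclude any singularity of $1/\fr$ strictly inside $\B ^d _{\aleph _0}$ when $\spr{A'}\le 1$. Here I would use a homogeneity trick rather than a Cauchy--Schwarz inequality for the joint spectral radius: for $t>0$ the function $g_t(Z):=(1/\fr)(tZ)$ has minimal realization $(tA',b',c')$ of size $M$, so by Theorem~\ref{thmA}, $g_t\in\bH ^\infty _d\iff t\,\spr{A'}<1$, while $g_t\in\bH ^\infty _d\iff\overline{\B^d_{\N}}\subseteq\dom{g_t}=\tfrac{1}{t}\dom{1/\fr}$. Thus $t\,\overline{\B^d_{\N}}\subseteq\dom{1/\fr}$ for every $t<1/\spr{A'}$; letting $t\uparrow 1/\spr{A'}\ge 1$ and using that members of $\bH ^\infty _d$ are defined throughout $\B ^d _{\aleph _0}$, so the infinite level is covered as well, we conclude $\B ^d _{\aleph _0}\subseteq\dom{1/\fr}$, whence $\sing{\fr}=\emptyset$.

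I expect the main obstacle to be exactly this last step: Proposition~\ref{prop:rational_spectral_radius} only locates a singularity of $1/\fr$ on the boundary sphere $\|Z_0\|=1$, which does not by itself contradict a hypothesis about the open ball, so the edge case $\spr{A'}=1$ in part~(ii) genuinely requires the scaling argument above to push all singularities of $1/\fr$ out of the open ball. The two supporting identifications it rests on — that of $\dom{g_t}$ with $\tfrac{1}{t}\dom{1/\fr}$, and the passage to the infinite level — are routine consequences of Vol\v{c}i\'c's domain theorem \cite{Volcic} and of the fact that $\bH ^\infty _d$ multipliers extend to $\B ^d _{\aleph _0}$, but they are where care is needed in writing out the details. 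A minor point is the Schur-complement bookkeeping showing $M\le N+1$, which accounts for the ``$k\le N+1$'' in the statement.
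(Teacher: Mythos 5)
Your proposal is correct and follows essentially the same route as the paper: reduce to the minimal realization of $1/\fr$ (bounded in size by $N+1$), invoke Proposition~\ref{prop:rational_spectral_radius} to rule out $\spr{A'}\ge 1$ in~(i) and $\spr{A'}>1$ in~(ii), and handle the remaining boundary case $\spr{A'}=1$ in~(ii) by a scaling argument. The only cosmetic differences are that you spell out the commutant-theorem equivalence between invertibility of $\fr(L)$ and membership of $1/\fr$ in $\bH^\infty_d$, and you scale $1/\fr$ directly rather than scaling $\fr$ and re-invoking part~(i) as the paper does.
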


%If $\mf{r} = p \in \fp$ has homogeneous degree $m$, then the NC variety of $p$ is completely determined by the finite levels up to size at most $ \sum _{j=1} ^m d^j = \# \{ \om \in \F ^d | \  |\om | \leq m \}$ \cite[Proposition 5.13]{NC-BSO}.

\begin{proof}
By \cite[Algorithm 4.3]{HMS-realize}, the minimal realization of $\fr ^{-1}$ is of size at most $N+1$. First, assume that $\det \fr (Z) \neq 0$ for every $Z \in \overline{\B^d_k}$ and $k \leq N+1$. We want to show that $\fr ^{-1} \in \bH_d^{\infty}$. For this it suffices to show that the joint spectral radius of the minimal realization of $\fr ^{-1}$ is strictly less than $1$. Let $(A,b,c)$ be a minimal realization of $\fr ^{-1}$ so that $A$ has size at most $N+1$. Assume that $\mr{spr}(A) \geq 1$. Then, by Proposition \ref{prop:rational_spectral_radius}, there exists a point $Z$ of norm $\frac{1}{\spr{A}} \leq 1$ and size at most $N+1$ so that $Z$ is not in the domain of $\fr ^{-1}$. However, by assumption, $\overline{\B^d_{N+1}} \subset \dom{\fr^{-1}}$ and this is a contradiction. This proves (i).

To prove (ii), we need only to consider the case $\mr{spr}(A) = 1$. Fix an arbitrary $0 < r < 1$ and note that $\fr ^{-1} _r (Z) := \fr (rZ) ^{-1}$, has minimal realization $(rA,b,c)$. In particular, $\mr{spr}(rA) = r \, \mr{spr}(A) < 1$, and thus $\fr _r (L) = \fr (rL)$ is invertible by (i). Since this is true for every $0 < r < 1$, we have that $\mr{Sing}  (\fr ) = \emptyset$.
\end{proof}	

Now we can prove that the spectrum of a bounded NC rational function is determined on finite levels. To be more precise we make the following definition:

\begin{defn}
Let $f \in \bH_d^{\infty}$, we define the \textrm{finite spectrum} of $f(L)$ to be
\[
\fs(f(L)) = \overline{\bigcup_{Z \in \B ^d _\N} \sigma(f(Z))}.
\]
%The \textrm{finite spectral radius} of $f(L)$ is then
%\[
%\frad(f(L)) = \sup\{ |\lambda| \mid \lambda \in \fs(f(L))\}.
%\]
\end{defn}

\begin{lemma}
Let $f \in \bH_d^{\infty}$, then 
\[
 \fs(f(L)) \subseteq \sigma(f(L)).
\]
\end{lemma}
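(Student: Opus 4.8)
The plan is to show that if $\lambda \notin \sigma(f(L))$, then $\lambda \notin \fs(f(L))$; equivalently, if $f(L) - \lambda I$ is invertible in $\scr{L}(\bH^2_d)$, then $f(Z) - \lambda I_n$ is invertible for every $Z \in \B^d_n$ and every $n$, and moreover the norms $\|(f(Z)-\lambda I_n)^{-1}\|$ are uniformly bounded (so that the closure operation adds nothing). The key observation is that $g := f - \lambda$ is again in $\bH^\infty_d$ (subtracting a scalar), and $g(L) = f(L) - \lambda I$; if this is invertible as a multiplier, its inverse is again a left multiplier, i.e. $g(L)^{-1} = h(L)$ for some $h \in \bH^\infty_d$, since $\bH^\infty_d$ is inverse-closed in $\scr{L}(\bH^2_d)$ (it is the commutant of the right shifts, hence a WOT-closed, unital, inverse-closed algebra).

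**Next I would** use the NC functional calculus / evaluation compatibility: for any $Z \in \B^d_n$ the point-evaluation $F(L) \mapsto F(Z)$ is a unital algebra homomorphism on $\bH^\infty_d$ (this is standard for the Fock space and is implicit in the NC-function viewpoint recalled in the introduction). Applying it to the identity $g(L)h(L) = I = h(L)g(L)$ gives $g(Z)h(Z) = I_n = h(Z)g(Z)$, so $g(Z) = f(Z) - \lambda I_n$ is invertible with inverse $h(Z)$, and $\|h(Z)\| \le \|h(L)\|$ because point-evaluation at $Z \in \B^d_n$ is completely contractive on $\bH^\infty_d$. Hence $\lambda \notin \sigma(f(Z))$ for all $Z$, and $\sup_{Z}\|(f(Z)-\lambda)^{-1}\| \le \|h(L)\| < \infty$; this bound shows the complement of $\sigma(f(L))$ is a subset of the complement of $\bigcup_Z \sigma(f(Z))$ which is moreover \emph{open} and bounded away from that union, so it is also disjoint from its closure $\fs(f(L))$.

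**The main obstacle** is making precise and citing correctly the two facts that point-evaluation at $Z \in \B^d_n$ is a completely contractive unital homomorphism on $\bH^\infty_d$ and that $\bH^\infty_d$ is inverse-closed in $\scr{L}(\bH^2_d)$. The first follows because $Z$ being a strict row contraction means the map $L^\alpha \mapsto Z^\alpha$ extends (via a suitable dilation / the universal property of the free shift, cf. Popescu's noncommutative von Neumann inequality) to a completely contractive homomorphism on $\bH^\infty_d$; alternatively one can invoke the NC-function structure already recalled, namely that every $F(L)\in\bH^\infty_d$ is a bounded NC function on $\B^d_\N$ with $\sup_n \sup_{Z\in\B^d_n}\|F(Z)\| = \|F(L)\|$. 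The second is the standard fact that $\bH^\infty_d = \{R_1,\dots,R_d\}'$ is a unital WOT-closed subalgebra of $\scr{L}(\bH^2_d)$ and hence contains the inverse of any of its invertible elements. Once these are in hand the lemma is immediate; I expect the write-up to be a short paragraph, the only care needed being the passage to the closure, which is handled by the uniform norm bound.
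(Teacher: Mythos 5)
Your proof is correct, but it takes a genuinely different route from the paper's. The paper argues directly: given $Z\in\B^d_n$ and $\lambda\in\sigma(f(Z))$, it picks an eigenvector $y$ of $f(Z)^*$ for $\overline{\lambda}$ (available because $n<\infty$) and observes that the NC Szeg\"o kernel vector $K\{Z,y,v\}$ is then an eigenvector of $f(L)^*$ for $\overline{\lambda}$, via the identity $f(L)^*K\{Z,y,v\}=K\{Z,f(Z)^*y,v\}$; the inclusion of the closure is then immediate because $\sigma(f(L))$ is closed. Your argument instead runs the contrapositive through the algebra: if $f(L)-\lambda$ is invertible then, since $\bH^\infty_d=\{R_1,\dots,R_d\}'$ is a commutant and hence inverse-closed, its inverse is $h(L)$ for some $h\in\bH^\infty_d$, and applying the (completely contractive, unital) evaluation homomorphism at each $Z$ shows $f(Z)-\lambda$ is invertible with $\|(f(Z)-\lambda)^{-1}\|\le\|h(L)\|$. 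Both are valid. The paper's construction is shorter, avoids appealing to inverse-closedness, and yields slightly more information, namely that $\fs(f(L))$ sits inside the closure of the \emph{point} spectrum of $f(L)^*$ — a fact that is reused implicitly in the proof of Lemma~\ref{lem:spec_of_mult}. Your approach is softer and more portable: it would work verbatim for any WOT-closed, inverse-closed operator algebra equipped with a family of completely contractive, unital evaluation homomorphisms. One small simplification to your write-up: the uniform resolvent bound $\sup_Z\|(f(Z)-\lambda)^{-1}\|\le\|h(L)\|$ is correct but not needed to pass to the closure; once you know $\bigcup_Z\sigma(f(Z))\subseteq\sigma(f(L))$, closedness of the operator spectrum already gives $\fs(f(L))\subseteq\sigma(f(L))$.
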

\begin{proof}
Let $Z \in \B_n^d$ and let $\lambda \in \sigma(f(Z))$. Let $0 \neq y \in \C^n$, be an eigenvector of $f(Z)^*$ associated with $\overline{\lambda}$. For any $v \in \C^n$,  we take the kernel function $K\{Z,y,v\}$ and $f(L)^* K\{Z,y,v\} = K\{Z,f(Z)^* y,v\} = K\{Z,\overline{\lambda} y,v\} = \overline{\lambda} K\{Z,y,v\}$.  Hence $\lambda$ in the spectrum of $f(L)$. Since the spectrum is closed, the claim follows.
\end{proof}

\begin{cor} \label{cor:spectrum}
Let $\fr \in \bH_d^{\infty}$ be an NC rational function with minimal realization of size $N$. Then $\lambda \in \sigma(\fr (L))$ if and only if there exists $Z \in \overline{\partial \B^d_k}$, for some $k \leq N+2$, such that $\lambda \in \sigma(\fr (Z))$. In particular,
\[
\sigma_{\N }(\fr (L)) = \sigma(\fr (L)).
\]
\end{cor}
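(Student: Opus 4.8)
The plan is to prove the spectral characterization $\lambda \in \sigma(\fr(L))$ iff $\lambda \in \sigma(\fr(Z))$ for some $Z$ at a finite level $k \le N+2$, by relating both sides to the invertibility of the rational multiplier $\fr(L) - \lambda I = (\fr - \lambda)(L)$. First I would record that $\fr - \lambda$ is again an NC rational function which is bounded on $\B^d_\N$ (since $\fr$ is and $\lambda$ is a scalar), and that its minimal realization has size at most $N+1$: one can realize $\fr - \lambda$ by appending the constant $-\lambda$, or more carefully invoke the realization arithmetic of \cite{HMS-realize} (Algorithm~4.3) to bound the realization size. Then $\lambda \notin \sigma(\fr(L))$ means precisely that $(\fr - \lambda)(L) = M^L_{\fr - \lambda}$ is invertible in $\scr{L}(\bH^2_d)$.

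Next I would apply Corollary~\ref{cor:singularity_locus} to the rational function $\fr - \lambda$, whose minimal realization has size $M \le N+1$. Part (i) of that corollary says: if $\det(\fr(Z) - \lambda I) \ne 0$ for every $Z \in \overline{\B^d_k}$ with $k \le M+1 \le N+2$, then $(\fr-\lambda)(L)$ is invertible, i.e. $\lambda \notin \sigma(\fr(L))$. Contrapositively, if $\lambda \in \sigma(\fr(L))$ then there exists $k \le N+2$ and $Z \in \overline{\B^d_k}$ with $\det(\fr(Z) - \lambda I) = 0$, i.e. $\lambda \in \sigma(\fr(Z))$. A small point to clean up: the statement asks for $Z \in \overline{\partial \B^d_k}$ (the boundary sphere) rather than the closed ball; since $\fr(Z) - \lambda$ is analytic (hence matrix-entries are continuous) on a neighborhood of $\overline{\B^d_k}$ by Corollary~\ref{cor:extended_analyticity} applied to $\fr - \lambda$, and $\det$ is continuous, if $\lambda \in \sigma(\fr(Z_0))$ for some $Z_0$ in the open ball one can scale $Z_0 \mapsto t Z_0$, $t \uparrow$, and use that $\det(\fr(tZ_0) - \lambda)$ either already vanishes at some boundary point or, failing that, re-examine: actually the cleanest route is to note $\det(\fr(Z)-\lambda)$ is a nonconstant analytic function on the ball whenever it has a zero (else $\fr - \lambda$ is everywhere singular and Corollary~\ref{cor:singularity_locus}(ii) forces $\sing{\fr-\lambda} \ne \emptyset$), so by the maximum/minimum modulus principle its zero set meets the boundary sphere as well — alternatively just absorb the boundary sphere into the closed ball since $\overline{\B^d_k} = \bigcup_{k' \le k}(\text{spheres})$ and one may pass to a lower level. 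For the converse direction, $\lambda \in \sigma(\fr(Z))$ for any $Z \in \B^d_\N$ gives $\lambda \in \fs(\fr(L)) \subseteq \sigma(\fr(L))$ by the preceding Lemma, and for $Z$ on the boundary sphere $\partial\B^d_k$ one uses that $\fr$ extends analytically past the boundary (Corollary~\ref{cor:extended_analyticity}), so $\fr(rZ) \to \fr(Z)$ as $r \uparrow 1$ with $rZ \in \B^d_k$, and eigenvalues depend continuously, so $\sigma(\fr(Z)) \subseteq \overline{\fs(\fr(L))} \subseteq \sigma(\fr(L))$ since the spectrum is closed. Finally, the displayed identity $\sigma_\N(\fr(L)) = \sigma(\fr(L))$ is exactly the statement that $\fs(\fr(L)) = \sigma(\fr(L))$, which follows by combining the Lemma ($\fs \subseteq \sigma$) with the just-established inclusion $\sigma \subseteq \fs$ (every $\lambda$ in the spectrum is realized, up to the closure, at a finite level).

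The step I expect to require the most care is the bookkeeping on realization sizes and the passage between the closed ball $\overline{\B^d_k}$ and the boundary sphere $\partial\B^d_k$ — making sure the bound $k \le N+2$ is genuinely correct (one $+1$ for forming $\fr - \lambda$ and one $+1$ from the $M+1$ in Corollary~\ref{cor:singularity_locus}(i) applied to $\fr - \lambda$), and confirming that the two-sided inclusion really does collapse to an equality without losing the level bound. Everything else is a direct assembly of Corollary~\ref{cor:singularity_locus}, Corollary~\ref{cor:extended_analyticity}, the $\fs \subseteq \sigma$ Lemma, and continuity of spectra.
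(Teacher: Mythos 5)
Your proof is correct and follows the paper's own route: you apply Corollary~\ref{cor:singularity_locus} to $\fr - \lambda$, whose minimal realization has size at most $N+1$ (equivalently, that of $(\fr - \lambda)^{-1}$ has size at most $N+2$), read off the level bound $k\le N+2$ from the contrapositive, and combine with the Lemma $\fs\subseteq\sigma$ together with continuity of matrix eigenvalues to close the equivalence and deduce $\sigma_\N(\fr(L))=\sigma(\fr(L))$. The hesitation over $\overline{\partial\B^d_k}$ versus $\overline{\B^d_k}$ merely reflects an imprecision in the paper's own wording; your argument, like the paper's, genuinely produces a point $Z$ in the closed ball $\overline{\B^d_k}$, which is all that is needed.
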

\begin{proof}
By Corollary \ref{cor:singularity_locus}, $0 \in \sigma(\fr (L))$ if and only if there exists $Z \in \overline{\partial \B^d_k}$, for some $k < N+1$, such that $\det \fr (Z) = 0$. Now $\lambda \notin \sigma (\fr (L) )$ if and only if $\fr (L) - \lambda I$ is invertible. By \cite[Algorithm 4.3]{HMS-realize}, the minimal realization of $(\fr  - \lambda)^{-1}$ is of size at most $N+2$ and a second application of the previous corollary yields the claim.
\end{proof}

Given a unital Banach algebra, $\scr{A}$, we can consider the map $a \mapsto \sigma(a)$ from $\scr{A}$ to $2 ^\C _0$, the set of all compact subsets of $\C$ equipped with the Hausdorff metric. It is well-known that this spectral map, $\sigma : a \mapsto \sigma (a)$ is upper semi-continuous but generally not continuous. In \cite{ConwayMorrel}, Conway and Morrel have characterized the points of continuity of the spectral map on $\scr{L} (\cH)$ for $\cH$ a separable Hilbert space. To apply this result in our setting, we need to recall a few notions concerning the decomposition of spectrum. In \cite[Theorem 1.7]{DP-inv}, Davidson and Pitts proved that for every $f \in \bH_d^{\infty}$, $\sigma(f(L)) = \sigma_e(f(L))$, where $\sigma_e$ denotes the essential spectrum. Moreover, by \cite[Theorem 1.7]{DP-inv} and \cite[Corollary 1.8]{DP-inv}, $\sigma(f(L))$ is connected and is not a singleton.

Recall that an operator $T \in \scr{L} (\cH)$ is \emph{semi-Fredholm} if $\ran{T}$ is closed and at least one of $\ker{T}$, $\ran{T} ^\perp$ is finite dimensional. Following Conway and Morrel, we denote
by $\SF \subset \scr{L} (\cH)$, the collection of all semi-Fredholm operators and we define the \emph{index} of any $T \in \SF$ as:
\[
\ind(T) = \mr{dim} \ker{T} - \mr{dim}\ker{T^*} \in \Z \cup \{\pm \infty\}.
\] For $T \in B(\cH)$ and $n \in \Z \cup \{\pm \infty\}$ we further define:
\begin{align*}
\sigma _n (T) & = \{\lambda \in \sigma(T) \mid \lambda I - T \in \SF \text{ and } \ind(\lambda I - T) = n\},\\
\sigma _\pm (T)&  = \bigcup_{n \neq 0}  \sigma _n(T).
\end{align*}
By \cite[Theorem 1.7]{DP-inv}, every multiplier is injective. Thus, for every $\lambda \in \C$, $\lambda I - f(L)$ is semi-Fredholm. If $d > 1$, then the index has two possible values. If $\lambda I - f(L)$ is outer, then $\ind(\lambda I - f(L)) = 0$. If $\lambda I - f(L)$ is not outer, then it admits an inner-outer decomposition $\lambda I - f(L) = \theta(L) g(L)$. In particular, $\ker{\theta(L)^*} \subset \ker{\overline{\lambda} I - f(L)^*}$. However, $\dim{\ker{\theta(L)^*}} \neq 0$ and thus $\ind(\lambda I - f(L)) < 0$.  We summarize this discussion in the following lemma.

\begin{lemma} \label{lem:spec_of_mult}
For every $f \in \bH_d^{\infty}$, $\sigma(f(L)) = \sigma _{\pm}(f(L)) \cup \sigma_0 (f(L))$ and 
\begin{align*}
\sigma_0 (f(L)) & = \{ \lambda \in \sigma(f(L)) \mid \lambda I - f(L) \text{ is outer}\},\\
\sigma_{\pm} (f(L)) & = \{\lambda \in \sigma(f(L)) \mid \lambda I - f(L) \text{ has an inner factor}\}.
\end{align*}
Furthermore, $\fs(f(L)) \subseteq \overline{\sigma_{\pm} (f(L))}$.
\end{lemma}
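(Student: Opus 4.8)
The plan is to treat the first assertion and the two displayed identities as a bookkeeping summary of the discussion immediately preceding the lemma, and then to derive the ``furthermore'' inclusion from the kernel--eigenvector computation already used in the previous lemma. The first part runs as follows: by \cite[Theorem 1.7]{DP-inv} every nonzero multiplier is injective, so each $\lambda I-f(L)=(\lambda-f)(L)$ is semi-Fredholm and hence $\sigma(f(L))=\bigcup_n\sigma_n(f(L))=\sigma_0(f(L))\cup\sigma_\pm(f(L))$; moreover the inner--outer factorization $\lambda I-f(L)=\theta(L)g(L)$ is either trivial — so $\lambda I-f(L)$ is outer, $(\lambda I-f(L))^*$ is injective, and $\ind(\lambda I-f(L))=0$ — or has $\theta(L)$ a nontrivial, hence non-unitary, inner multiplier, in which case $\{0\}\neq\ker{\theta(L)^*}\subseteq\ker{\overline\lambda I-f(L)^*}$ forces $\ind(\lambda I-f(L))<0$. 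This gives exactly the two displayed descriptions of $\sigma_0(f(L))$ and $\sigma_\pm(f(L))$.

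For the ``furthermore'' clause, I would first reduce to a statement at finite levels: $\overline{\sigma_\pm(f(L))}$ is closed and $\fs(f(L))$ is by definition the closure of $\bigcup_{Z\in\B^d_\N}\sigma(f(Z))$, so it suffices to show $\sigma(f(Z))\subseteq\sigma_\pm(f(L))$ for each fixed $Z\in\B^d_n$ with $n<\infty$. Given $\lambda\in\sigma(f(Z))$, pick $0\neq y\in\C^n$ with $f(Z)^*y=\overline\lambda y$. Then, exactly as in the proof of the previous lemma, $K\{Z,y,y\}\in\bH^2_d$ by Proposition~\ref{NCszego}, $K\{Z,y,y\}\neq0$ because its coefficient on $L^\emptyset 1$ equals $\|y\|^2_{\C^n}$, and
\[
f(L)^*K\{Z,y,y\}=K\{Z,f(Z)^*y,y\}=\overline\lambda\,K\{Z,y,y\}.
\]
Hence $\ker{\overline\lambda I-f(L)^*}\neq\{0\}$; in particular $\lambda\in\sigma(f(L))$ and the range of $\lambda I-f(L)$ is not dense, so $\lambda I-f(L)$ is not outer. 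By the description of $\sigma_0(f(L))$ established above, $\lambda\in\sigma(f(L))\setminus\sigma_0(f(L))=\sigma_\pm(f(L))$. Taking the union over all such $Z$ and then the closure yields $\fs(f(L))\subseteq\overline{\sigma_\pm(f(L))}$.

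The kernel--eigenvector computation and the inner--outer dichotomy bookkeeping are routine. The one genuinely substantive input — which I would be careful to cite in exactly the form stated in the discussion preceding the lemma — is that $\lambda I-f(L)$ is semi-Fredholm for \emph{every} $\lambda\in\C$ (equivalently, that these multipliers have closed range), since this is what licenses the decomposition $\sigma(f(L))=\sigma_0(f(L))\sqcup\sigma_\pm(f(L))$ in the first place. Beyond invoking that fact I anticipate no obstacle.
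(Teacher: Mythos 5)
Your proposal is correct and takes essentially the same route as the paper: the first part is a restatement of the semi-Fredholm/inner-outer dichotomy discussion preceding the lemma, and the ``furthermore'' clause is proved by showing that a cokernel vector of $\lambda I - f(Z)$ at a finite level $Z$ produces an NC Szeg\H{o} kernel orthogonal to $\ran{\lambda I - f(L)}$, so $\lambda I - f(L)$ is not outer. The paper compresses this last step into a single sentence (singularity of $\lambda I - f(Z)$ $\Rightarrow$ non-outerness of $\lambda I - f(L)$), while you make the kernel--eigenvector computation explicit; the content is the same.
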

\begin{proof}
The first two claims follow from the discussion preceding the lemma, and it remains prove the final claim. For every $Z \in \B_{\N}^d$ and $\lambda \in \sigma(f(Z))$, since $\lambda I - f(Z)$ is singular, $\lambda I - f(L)$ is not outer.
\end{proof}

\begin{remark}
We note that for every $Z \in \B_{\N}^d$ and $\lambda \in \sigma(f(Z))$, $\lambda I - f(L)$ actually has a non-trivial singularity locus on finite levels. Thus, in particular, $\lambda I - f(L)$ has a non-trivial Blaschke factor. (See Definition \ref{ncBSOdef} below for the definition of an NC Blaschke inner function.)
\end{remark}

\setcounter{thmA}{1}

\begin{thmA} \label{thm:spec_cont}
Let $\fr \in \bH_d^{\infty}$ be an NC rational function. Then $\fr (L)$ is a point of continuity of the spectrum.
\end{thmA}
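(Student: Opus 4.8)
The strategy is to invoke the Conway--Morrell characterization of continuity points of the spectral map on $\scr{L}(\cH)$. Roughly, their result says that $T$ is a point of continuity of $T \mapsto \sigma(T)$ precisely when, away from the boundary of the spectrum, every point $\lambda \in \sigma(T)$ lies in the closure of the semi-Fredholm domain and $\mr{ind}(\lambda I - T)$ behaves ``stably''; more concretely, a sufficient condition is that $\sigma(T) = \sigma_e(T)$ together with $\sigma_\pm(T) \subseteq \overline{\mr{int}_{\sigma}\,\sigma_\pm(T)}$ (continuity point iff $\sigma(T)\setminus\overline{\sigma_{\pm}(T)}$ has no interior and similar statements). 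So the plan is: first, by Davidson--Pitts \cite[Theorem 1.7, Corollary 1.8]{DP-inv}, we already know $\sigma(\fr(L)) = \sigma_e(\fr(L))$, it is connected, and not a singleton, and every multiplier is injective so $\lambda I - \fr(L)$ is always semi-Fredholm with index $\leq 0$; by Lemma~\ref{lem:spec_of_mult} we have the clean decomposition $\sigma(\fr(L)) = \sigma_0(\fr(L)) \cup \sigma_\pm(\fr(L))$. The task then reduces to showing the ``outer part'' $\sigma_0(\fr(L))$ contributes nothing bad --- i.e. that it is contained in the closure of $\sigma_\pm(\fr(L))$, or that the relevant Conway--Morrell obstruction set is empty.

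The key new input is the rationality of $\fr$. By Corollary~\ref{cor:spectrum}, $\sigma(\fr(L)) = \fs(\fr(L)) = \overline{\bigcup_{Z\in\B^d_\N}\sigma(\fr(Z))}$, and in fact $\lambda\in\sigma(\fr(L))$ iff $\lambda\in\sigma(\fr(Z))$ for some $Z$ on a finite-level sphere of bounded size. Combined with Lemma~\ref{lem:spec_of_mult}, every $\lambda$ with $\lambda\in\sigma(\fr(Z))$ for some $Z\in\B^d_\N$ lies in $\sigma_\pm(\fr(L))$ (since $\lambda I - \fr(Z)$ singular forces $\lambda I - \fr(L)$ non-outer), so $\fs(\fr(L))\subseteq\overline{\sigma_\pm(\fr(L))}$. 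Therefore
\[
\sigma(\fr(L)) = \fs(\fr(L)) \subseteq \overline{\sigma_\pm(\fr(L))} \subseteq \sigma(\fr(L)),
\]
so $\sigma(\fr(L)) = \overline{\sigma_\pm(\fr(L))}$ and consequently $\sigma_0(\fr(L))\subseteq\overline{\sigma_\pm(\fr(L))}$. This says the ``index-zero'' part of the spectrum is swallowed by the closure of the ``nonzero-index'' part --- exactly the degeneracy that the Conway--Morrell criterion needs ruled in (the semi-Fredholm part is dense in $\sigma$, and the spectrum equals the closure of the points of nonzero index). One then checks that $\sigma(\fr(L))$ having no isolated points and being equal to $\overline{\sigma_\pm}$, together with $\sigma = \sigma_e$, verifies the Conway--Morrell hypotheses for a point of continuity: the obstruction to continuity is (essentially) a ``hole'' of the spectrum on which the index jumps or isolated index-zero components, and here every point of $\sigma(\fr(L))$ is a limit of semi-Fredholm points of a fixed nonzero index sign.

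The main obstacle is matching our conclusions to the precise statement of the Conway--Morrell theorem: their criterion for $T$ to be a continuity point of $\sigma(\cdot)$ is stated in terms of the function $\lambda\mapsto\mr{ind}(\lambda I - T)$ on the semi-Fredholm domain and the topology of $\sigma(T)$ relative to $\rho_{s-F}(T)$, and one must verify that ``$\sigma = \sigma_e$, $\sigma$ connected and perfect, $\sigma = \overline{\sigma_\pm}$, index $\leq 0$ everywhere'' is enough to land in their class. I would handle this by extracting from \cite{ConwayMorrel} the clean sufficient condition --- that $T$ is a continuity point provided $\sigma(T)\setminus\sigma_\pm(T)$ has empty interior and $\sigma_\pm(T)$ is (locally) the full spectrum near each of its points --- and then noting $\sigma_0(\fr(L)) = \sigma(\fr(L))\setminus\sigma_\pm(\fr(L))$ has empty interior because its closure is contained in $\overline{\sigma_\pm(\fr(L))}$ minus $\sigma_\pm(\fr(L))$, a set with empty interior. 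A secondary point to be careful about: the closure operations and the jump from ``$Z$ ranges over all finite levels'' to ``$Z$ ranges over finitely-bounded levels'' (Corollary~\ref{cor:spectrum}), but that is already done for us in the cited corollary, so it is only bookkeeping.
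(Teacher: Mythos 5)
Your argument is essentially identical to the paper's proof: both reduce continuity at $\fr(L)$ to verifying $\sigma(\fr(L)) = \overline{\sigma_\pm(\fr(L))}$ via the Conway--Morrell criterion for connected spectrum, and both establish this equality by chaining Corollary~\ref{cor:spectrum} (giving $\sigma(\fr(L)) = \fs(\fr(L))$) with Lemma~\ref{lem:spec_of_mult} (giving $\fs(\fr(L)) \subseteq \overline{\sigma_\pm(\fr(L))}$). The paper simply cites the precise form \cite[Theorem~3.1(c)]{ConwayMorrel} directly, which dispenses with the cautious verification of Conway--Morrell hypotheses you sketched; otherwise the proof is the same.
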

\begin{proof}
Since the spectrum is connected, by \cite[Theorem 3.1(c)]{ConwayMorrel}, the spectrum is continuous at $\fr (L)$ if and only if $\sigma(\fr (L)) = \overline{\sigma _{\pm}( \fr (L) )}$. By Lemma \ref{lem:spec_of_mult}, we know that $\fs(\fr (L)) \subseteq \overline{\sigma _{\pm}(\fr (L))}$. Therefore, from Corollary \ref{cor:spectrum}, we conclude that $\sigma(\fr (L)) = \overline{\sigma _{\pm}( \fr (L) )}$.
\end{proof}

\section{Inner-outer factorization of NC rational functions in Fock Space} \label{ratinout}

As established in \cite{NC-BSO}, any element, $h$, of Fock space has a unique NC Blaschke-Singular-Outer factorization, $h = B \cdot  S \cdot f$ where $B \in \bH ^\infty _d$ is an NC Blashcke inner function, $S \in \bH ^\infty _d$ is NC singular inner and $f \in \bH ^2 _d$ is NC outer. Namely, as proven by Popescu and Davidson-Pitts, $h = \Theta \cdot f$ has a unique NC inner-outer factorization, where $\Theta \in \bH ^\infty _d$ is inner, \emph{i.e.} an isometric left multiplier, and $f$ is NC outer, \emph{i.e.} cyclic for the right shifts. In \cite{NC-BSO} we proved that the inner factor, $\Theta$ can be further uniquely factored as the product of an NC Blaschke inner function, $B$, containing all the vanishing or zero information of $h$, and an NC singular inner function which is pointwise invertible in $\B ^d _\N$. 

\begin{defn}{ (\cite[Definition 3.6]{NC-BSO})} \label{ncBSOdef}
An NC inner $\Theta \in \bH ^\infty _d$ is:
\bn
    \item[(i)] \emph{Blaschke} if $\displaystyle \ran{\Theta (L)} = \left\{ h \in \bH ^2 _d \left|  \ y^* h(Z) \equiv 0 \ \forall (Z,y) \in \mr{Sing} (\Theta )  \right. \right\}. $ 
    \item[(ii)] \emph{singular} if $\Theta$ is pointwise invertible in $\B ^d _{\aleph _0}$. 
\en
\end{defn}

Recall, as discussed in the introduction, that a rational function, $\fr$ in the classical Hardy space, $H^2$, has a simple inner-outer factorization. Namely, $\fr = b \cdot f$, where $b$ is Blaschke inner (in fact a finite Blaschke product, hence rational) and $f$ is outer. In particular, $\fr$ has no singular inner factor. In this section we obtain analogues of these results in the NC setting of Fock space. 

\begin{thm} \label{NCratinout}
Let $\fr \in \bH ^2 _d$ be an NC rational function with minimal realization $(A, b, c)$ of size $N$. If $\fr = \Theta \cdot f$ is the NC inner-outer factorization of $\fr$ then both $\Theta$ and $f$ are NC rational functions and the inner factor is NC Blaschke. The NC outer factor, $f$, has a realization $(A , \wt{b} , c )$ for some $\wt{b} \in \C ^N$ and the minimal realization of $\Theta$ is of size at most $2N +1$.
\end{thm}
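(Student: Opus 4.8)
The starting point is Theorem~\ref{NCreg} (equivalently Corollary~\ref{ratkernel}): since $\fr\in\bH^2_d$ is rational with minimal realization $(A,b,c)$, we have $\spr{A}<1$, and $\fr = K\{\ov W,\ov x,\ov u\}$ for a strict row contraction $W$ similar to $A$. In particular $\fr$ is bounded on $\B^d_\N$, lies in $\bH^\infty_d$, and extends analytically across $\partial\B^d_\N$. Now invoke the NC Blaschke--Singular--Outer machinery of \cite{NC-BSO}: write $\fr = \Theta\cdot f$ with $\Theta\in\bH^\infty_d$ inner and $f\in\bH^2_d$ outer. The plan is to (1) show the inner factor $\Theta$ is Blaschke (no singular part), (2) identify realizations for both factors, and (3) bound the size of the minimal realization of $\Theta$.

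\textbf{Step 1: the inner factor is Blaschke.} The singular inner factor $S$ of $\Theta$ is, by Definition~\ref{ncBSOdef}(ii), pointwise invertible in $\B^d_{\aleph_0}$, so $\Sing(S)=\emptyset$. I would argue that a rational $\fr\in\bH^2_d$ has no such factor. One route: $\fr=\Theta f$ with $f$ outer and $\Theta=B\cdot S$; since $\fr$ is rational and analytic across the boundary, and $f$ is outer, the ``vanishing set'' of $\fr$ on $\B^d_\N$ is governed entirely by $B$ (as $f(Z)$ and $S(Z)$ are invertible on $\B^d_\N$). To exclude $S$ being nontrivial, observe that $\Sing(\fr) = \Sing(B)$ lives on finite levels and is cut out by the singularity locus of the pencil $L_A$; I expect to show $\Theta$ itself is rational with the same ``pole data'' so that $\Sing(\Theta)=\Sing(B)$ already accounts for all of $\Theta$, forcing $S$ trivial. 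Alternatively, one can run a Fredholm-index argument as in Lemma~\ref{lem:spec_of_mult}: a singular factor contributes to $\ker\Theta(L)^*$ but not to any finite-level singularity locus, and rationality plus Corollary~\ref{cor:singularity_locus} pins the finite spectrum down. I will pick whichever is cleanest given what \cite{NC-BSO} provides.

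\textbf{Step 2 and 3: realizations and the size bound.} For the outer factor: since $f$ is cyclic for the right shifts, there exists $F(R)\in\bH^\infty_d$ (or a $\bH^2_d$ analogue) with $f = \Theta^*\,\fr$ in the appropriate sense, i.e. $\fr = \Theta f$; the content of the claim ``$f$ has realization $(A,\wt b,c)$'' is that $f$ shares the \emph{same} pencil $A$ and \emph{same} $c$ as $\fr$, only changing the covector $b\mapsto\wt b$. I would prove this by noting $f = \ov K\{W,\wt x,u\}$-type formula: writing $\fr_{y,v}(Z)=y^*(I-ZA)^{-1}v$ (Lemma~\ref{evalA}), the outer factor should arise as $b$ is replaced by the ``outer part'' of the vector $b$ relative to the $A$-invariant structure — concretely, $f$ is obtained from $\fr$ by a left operation that fixes the controllable data $(A,c)$. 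Since $\fr$ and $\Theta^{-1}$ (pointwise on $\B^d_\N$, $\Theta$ being inner rational) are rational, $f=\Theta^{-1}\fr$ is rational; a Sylvester/realization-arithmetic computation (the standard formulas for realizations of products and inverses, e.g. \cite[Algorithm 4.3]{HMS-realize}) keeps the state space at size $N$ for $f$ after minimization, since multiplying on the left by $\Theta^{-1}$ does not enlarge the reachable subspace generated by $c$. For $\Theta$ itself: $\Theta = \fr\cdot f^{-1}$, and $f^{-1}$ has a minimal realization of size at most $N+1$ by \cite[Algorithm 4.3]{HMS-realize}; the product realization then has size at most $N + (N+1) = 2N+1$, which is exactly the claimed bound. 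The main obstacle I anticipate is Step~1 — rigorously excluding a singular inner factor — since it requires relating the purely operator-theoretic notion of ``singular'' (pointwise invertibility on the \emph{infinite} level $\B^d_\infty$) to the finite-level, pencil-based rationality data; the realization-arithmetic in Steps~2--3 should be routine bookkeeping once the factorization $\fr=\Theta f$ with both factors rational is in hand.
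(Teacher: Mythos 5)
Your overall scaffolding (reduce to $\fr = K\{\ov W,\ov x,\ov u\}$ via Theorem~\ref{NCreg}, get $f$ rational with the same pencil, get $\Theta = \fr\cdot f^{-1}$ rational of size $\leq 2N+1$, then deduce Blaschke) matches the paper's architecture, and your Step~3 bound via \cite[Algorithm 4.3]{HMS-realize} is exactly right. But there are two concrete gaps.

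\textbf{Gap 1: the mechanism for the realization of $f$.} You assert that ``multiplying on the left by $\Theta^{-1}$ does not enlarge the reachable subspace generated by $c$,'' so that minimization returns a realization of $f$ with the same pencil $A$ and vector $c$. This is not true for generic rational products; realization arithmetic for a product of two functions with state dimensions $m$ and $n$ gives state dimension $m+n$, and nothing about left multiplication by an arbitrary rational function preserves the controllability data. What actually makes the claim work is the kernel intertwining identity: since $\fr = K\{Z,y,v\}$ and $\Theta(L)$ is an isometry,
\[
f \;=\; \Theta(L)^*\Theta(L)f \;=\; \Theta(L)^*\fr \;=\; \Theta(L)^*K\{Z,y,v\} \;=\; K\{Z,\Theta(Z)^*y,v\},
\]
using the standard adjoint-action of left multipliers on Szeg\H{o} kernels. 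This produces a kernel vector with the same $Z$ and the same $v$, only the left vector changes; unwinding via the similarity $\ov Z = S^{-1}AS$, $\ov v = S^{-1}c$ gives precisely $f(\fz) = \wt b^*(I-\fz A)^{-1}c$. You gesture at a ``$K\{W,\wt x,u\}$-type formula'' but never actually write down this one line, which is the whole content of the realization claim for $f$. Note also the ordering issue: you are implicitly building $\Theta^{-1}$ before you have $f$, whereas the correct order is to build $f$ from the kernel identity first and only then set $\Theta = \fr\cdot f^{-1}$.

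\textbf{Gap 2: the Blaschke conclusion.} Your Step~1 proposes either (a) an argument equating $\Sing(\fr)$ with $\Sing(B)$ and concluding $S$ is trivial, or (b) a Fredholm-index argument. Neither one closes. For (a): $\Sing(S)=\emptyset$ is consistent with $S\neq I$ by definition of singular inner, so knowing that $\Sing(\fr)=\Sing(B)$ does not preclude a nontrivial $S$; you would still need some additional theorem to kill it. For (b): you do not say what the index argument actually is, and Lemma~\ref{lem:spec_of_mult} concerns the spectrum of $\lambda I - f(L)$, not the singular/Blaschke dichotomy for a fixed inner factor. The paper's route is different and much shorter: once you know $\Theta = \fr\cdot f^{-1}$ is an NC \emph{rational} inner function, Theorem~A puts $\Theta$ in the NC disk algebra $\A$, and then \cite[Theorem 6.10]{NC-BSO} says any inner function in $\A$ is automatically Blaschke. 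This is the step you are missing, and it also resolves the circularity you flagged yourself (you wanted $\Theta$ rational to run Step~1, but your ordering put Step~1 before Steps~2--3; in fact rationality of $\Theta$ must come first and is what makes the Blaschke conclusion immediate).
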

\begin{proof}
We have that $\fr (\zeta ) = b^* (I - \fz A ) ^{-1} c$ belongs to $\bH ^2 _d$ and $b,c \in \C ^N$ (where $N$ is minimal) if and only if $\fr = K \{ Z, y ,v \} $ for some $Z \in \B ^d _N$, and some $y,v \in \C ^N$ by Corollary \ref{ratkernel} and Theorem \ref{NCreg}. Moreover, there is a similarity, $S \in \C ^{N \times N}$ so that $\ov{Z} = S^{-1} A S$, $\ov{y} = S^* b$ and $\ov{v} = S^{-1} c$. Calculate that
\ba f & = & \Theta (L) ^* \Theta  (L) f \nn \\
& = &  \Theta (L) ^* \fr = \Theta (L) ^* K \{ Z , y , v \} \nn \\
& = & K \{ Z , \Theta (Z) ^* y , v \}, \nn \ea and it follows that there is some $\wt{b} \in \C ^N$ so that 
$$ f (\fz ) = \wt{b} ^* (I - \fz A ) ^{-1} c. $$ Hence $(A ,\wt{b} ,c )$ is a (not necessarily minimal) realization of $f$ of size $N$. Since $f$ is outer, it is invertible in the NC unit ball \cite[Lemma 3.2]{JM-freeSmirnov} \cite[Theorem 4.2]{NC-BSO}, and it follows that 
$$ \Theta (Z) = \fr (Z) f(Z) ^{-1}, $$ is also an NC rational function with minimal realization of size at most $2N+1$ \cite[Algorithm 4.3]{HMS-realize}. That is, $f(Z) ^{-1}$ has a realization of size at most $N+1$ and $\fr \cdot f ^{-1}$ then has a descriptor realization of size at most $N + N +1 = 2N +1$. 

Finally, by Theorem \ref{thmA}, since $\Theta  = \Theta (L) 1 \in \bH ^2 _d$ is an NC rational function, $\Theta \in \A$ belongs to the NC disk algebra. By \cite[Theorem 6.10]{NC-BSO} any NC inner function in $\A$ is necessarily Blaschke, and we conclude that $\Theta$ is an NC Blaschke inner function.
\end{proof}

\begin{cor} 
Any inner NC rational function is NC Blaschke.
\end{cor}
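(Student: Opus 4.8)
The plan is to deduce this immediately from Theorem~\ref{NCratinout}. Suppose $\Theta \in \bH^\infty_d$ is an inner NC rational function. Being rational and an element of $\bH^\infty_d \subseteq \bH^2_d$, it has a minimal realization $(A,b,c)$ of some size $N$, and it lies in $\bH^2_d$. Its NC inner-outer factorization is then $\Theta = \Theta \cdot 1$, since $\Theta$ is already inner and the constant $1$ is NC outer (it is cyclic for the right shifts). By the uniqueness of the NC inner-outer factorization (Popescu, Davidson-Pitts), the inner factor of $\Theta$, viewed as an element of $\bH^2_d$, is $\Theta$ itself.

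Now apply Theorem~\ref{NCratinout} to $\fr := \Theta$: since $\Theta$ is a rational function in $\bH^2_d$, its inner factor is NC Blaschke. But we have just observed that this inner factor is $\Theta$. Hence $\Theta$ is NC Blaschke, as claimed. Alternatively, and even more directly: by Theorem~\ref{thmA}, an NC rational function in $\bH^2_d$ lies in the NC disk algebra $\A$; so $\Theta \in \A$, and by \cite[Theorem 6.10]{NC-BSO} any NC inner function in $\A$ is necessarily Blaschke.

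There is essentially no obstacle here — the corollary is a restatement of the last line of the proof of Theorem~\ref{NCratinout}, now with the hypothesis that $\fr$ itself is the inner function. The only point requiring a word of care is confirming that an inner NC rational function $\Theta$ does belong to $\bH^2_d$ so that Theorem~\ref{thmA} and Theorem~\ref{NCratinout} apply: this is immediate since $\bH^\infty_d \subseteq \bH^2_d$ (every multiplier $F(L)$ gives the element $F(L)1 \in \bH^2_d$), and a rational inner function is in particular a bounded multiplier.

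\begin{proof}
Let $\Theta$ be an inner NC rational function. Then $\Theta \in \bH^\infty_d \subseteq \bH^2_d$, so $\Theta$ is an NC rational function in the Fock space. Its NC inner-outer factorization is $\Theta = \Theta \cdot 1$, since $\Theta$ is inner and $1$ is NC outer; by uniqueness of the factorization, the inner factor of $\Theta$ is $\Theta$ itself. By Theorem~\ref{NCratinout}, the inner factor of any NC rational function in $\bH^2_d$ is NC Blaschke. Hence $\Theta$ is NC Blaschke.
\end{proof}
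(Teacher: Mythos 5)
Your proof is correct and follows exactly the same line as the paper, which states this corollary as an immediate consequence of Theorem~\ref{NCratinout}: an inner rational $\Theta\in\bH^\infty_d\subseteq\bH^2_d$ has inner-outer factorization $\Theta=\Theta\cdot 1$, so its inner factor is $\Theta$ itself, and by Theorem~\ref{NCratinout} that inner factor is NC Blaschke. Your alternative remark (Theorem~\ref{thmA} gives $\Theta\in\A$, then cite \cite[Theorem 6.10]{NC-BSO}) is in fact precisely the last step of the paper's proof of Theorem~\ref{NCratinout}, so both of your routes coincide with the paper's.
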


\begin{cor} \label{NCratouter}
An NC rational function, $\fr \in \bH ^2 _d$ with minimal realization $(A,b,c)$ of size $N$ is NC outer if and only if $\mr{Sing} _{N+1} (\fr ) = \emptyset$.
\end{cor}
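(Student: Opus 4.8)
The plan is to read this off from three facts already in place: the rational inner--outer factorization (Theorem~\ref{NCratinout}), the finite-level detection of the singularity locus (Corollary~\ref{cor:singularity_locus}), and the description of NC Blaschke inners via their singularity loci (Definition~\ref{ncBSOdef}(i)). The ``only if'' direction is immediate: if $\fr$ is NC outer then, being in $\bH ^2 _d$, it is pointwise invertible in $\B ^d _\N$ by \cite[Lemma 3.2]{JM-freeSmirnov} and \cite[Theorem 4.2]{NC-BSO}, so $\fr (Z)$ has no nonzero left null vector for any $Z$ in the row ball, whence $\mr{Sing} _{N+1}(\fr ) = \emptyset$.

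For the ``if'' direction I would first upgrade the hypothesis from one level to all levels $\le N+1$: if $(Z,y) \in \mr{Sing} _k(\fr )$ with $1 \le k \le N$ and $0_m$ denotes the $d$-tuple of $m\times m$ zero matrices, then $Z \oplus 0_{N+1-k} \in \B ^d _{N+1}$ and, since $\fr$ respects direct sums, $\bigl( Z \oplus 0_{N+1-k},\ y \oplus 0 \bigr) \in \mr{Sing} _{N+1}(\fr )$. Hence $\mr{Sing} _{N+1}(\fr ) = \emptyset$ forces $\mr{Sing} _k(\fr ) = \emptyset$ for all $k \le N+1$, which --- each $\fr (Z)$ being a square matrix --- is exactly the statement $\det \fr (Z) \ne 0$ for all $Z \in \B ^d _k$, $k \le N+1$. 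Since $\fr \in \bH ^2 _d$ is rational, Theorem~\ref{thmA} gives $\fr \in \bH ^\infty _d$, so Corollary~\ref{cor:singularity_locus}(ii) applies and yields $\mr{Sing} (\fr ) = \emptyset$. To finish, write $\fr = \Theta (L) f$ for the NC inner--outer factorization; by Theorem~\ref{NCratinout}, $\Theta$ is NC Blaschke and $f$ is outer, hence pointwise invertible, so $y^* \fr (Z) = 0 \Leftrightarrow y^* \Theta (Z) = 0$ and therefore $\mr{Sing} (\Theta ) = \mr{Sing} (\fr ) = \emptyset$. Definition~\ref{ncBSOdef}(i) then says $\ran{\Theta (L)}$ is cut out by an empty list of conditions, i.e.\ $\ran{\Theta (L)} = \bH ^2 _d$; an isometry with full range is unitary, so $\Theta$ is a unimodular constant and $\fr = \Theta (L) f$ is outer.

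The only step that is more than bookkeeping is the passage from vanishing of the single slice $\mr{Sing} _{N+1}(\fr )$ to vanishing of the entire singularity locus, and that is precisely the content of Corollary~\ref{cor:singularity_locus} (which rests on Proposition~\ref{prop:rational_spectral_radius}); here it is used as a black box after the direct-sum reduction. The one thing to verify carefully in that reduction is that it does not silently drop the base level: when $\fr (0) = b^*c = 0$ the point $0 \in \B ^d _1$ lies in $\mr{Sing} _1(\fr )$ for every $y \ne 0$, and its inflation $(0_{N+1}, y \oplus 0)$ is a genuine element of $\mr{Sing} _{N+1}(\fr )$, so no special case is needed.
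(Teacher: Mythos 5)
Your proof is correct and follows essentially the same route as the paper: the rational inner--outer factorization of Theorem~\ref{NCratinout}, the finite-level detection of $\mr{Sing}(\fr)$ via Corollary~\ref{cor:singularity_locus}, and the observation that a Blaschke inner with empty singularity locus must be a unitary constant. The only difference is that you spell out the direct-sum inflation $(Z,y)\mapsto(Z\oplus 0,\,y\oplus 0)$ showing that triviality of $\mr{Sing}_{N+1}(\fr)$ forces triviality at all lower levels, a step the paper leaves implicit when invoking Corollary~\ref{cor:singularity_locus}(ii); this is a welcome clarification but not a different argument.
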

\begin{proof}
If $\fr = \Theta \cdot f$ is the inner-outer factorization of $\fr$, then $\Theta$ is NC Blaschke by the previous corollary, so that $\Theta \neq I$ if and only if $\mr{Sing} ( \fr ) \neq \emptyset$. By Corollary \ref{cor:singularity_locus}, $\mr{Sing} (\fr ) \neq \emptyset$ if and only if $\mr{Sing} _{N+1} (\fr ) \neq \emptyset$. 
\end{proof}

\begin{cor}
An NC rational function $\mf{r} \in \bH ^2 _d$ is outer if and only if the radius of convergence of the Taylor-Taylor series of $\mf{r} ^{-1}$ at $0 \in \B ^d _1$ is at least $1$.
\end{cor}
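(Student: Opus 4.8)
The plan is to read ``radius of convergence'' in the Hadamard sense and to view both sides of the equivalence as statements about where the singularities of $\fr^{-1}$ sit. First I would dispose of the degenerate case $\fr(0)=0$: an outer function is pointwise invertible throughout $\B^d_\N$ by \cite[Theorem 4.2]{NC-BSO}, so $\fr$ cannot then be outer, while $\fr^{-1}$ is not regular at the origin; reading its radius of convergence as $0$, both conditions fail and the equivalence is trivial. So assume $\fr(0)\neq 0$, fix a minimal realization $(A',b',c')$ of $\fr^{-1}$, and write $R'$ for the radius of convergence of $\fr^{-1}(Z)=\sum_{\om\in\F^d}(b')^*(A')^\om c'\,Z^\om$ at $0$.

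For the implication $R'\ge 1\implies\fr$ outer, the main point is to promote the formal series for $\fr^{-1}$ to an honest analytic function on the whole open row ball. If $R'\ge 1$, then by Popescu's theorem \cite[Theorem 1.1]{Pop-freeholo} the series converges absolutely and uniformly in operator norm on $r\B^d_\N$ for every $r<1$, hence defines a locally bounded --- thus analytic --- NC function $g$ on $\B^d_\N=\bigcup_{r<1}r\B^d_\N$ which agrees with $\fr^{-1}$ near $0$. Since $\fr\in\bH^\infty_d$ is itself analytic on $\B^d_\N$ by Theorem~\ref{thmA}, the NC function $Z\mapsto\fr(Z)g(Z)$ equals the constant $I$ near $0$, and therefore on all of $\B^d_\N$ by the identity principle for NC functions \cite{KVV}. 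Thus $\fr(Z)$ is invertible --- in particular $\det\fr(Z)\neq 0$ --- for every $Z$ in the row ball, and $\fr$ is outer by Theorem~\ref{thmC} (Corollary~\ref{NCratouter}).

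For the converse, assume $\fr$ is outer. Then $\fr(Z)$ is invertible for every $Z\in\B^d_\N$ (Theorem~\ref{thmC}, \cite[Theorem 4.2]{NC-BSO}), so $\B^d_\N\subseteq\dom{\fr^{-1}}$. If $\spr{A'}=0$ then $A'$ is jointly nilpotent, $\fr^{-1}$ is an NC polynomial, and $R'=\infty\ge 1$. Otherwise $\spr{A'}>0$, and Proposition~\ref{prop:rational_spectral_radius} applied to $\fr^{-1}$ produces $Z_0$ with $\|Z_0\|=1/\spr{A'}$ and $Z_0\notin\dom{\fr^{-1}}$; since $\B^d_\N\subseteq\dom{\fr^{-1}}$ this forces $\|Z_0\|\ge 1$, i.e.\ $\spr{A'}\le 1$. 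Finally, the same Hadamard/Cauchy--Schwarz estimate used in the proof of Proposition~\ref{NCszego}, now applied to the coefficient sequence $(b')^*(A')^\om c'$ (bounding $\sum_{|\om|=k}|(b')^*(A')^\om c'|^2\le\|b'\|^2\|c'\|^2\,\|\mr{Ad}^{(k)}_{A',(A')^*}(I)\|$ and taking $2k$-th roots), gives $1/R'\le\spr{A'}$, whence $R'\ge 1/\spr{A'}\ge 1$.

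I expect the only genuine obstacle to be the $(\Leftarrow)$ direction, specifically the step where mere convergence of the power series of $\fr^{-1}$ on the unit ball is upgraded to invertibility of $\fr$ on all of $\B^d_\N$: this is exactly where one must use both the identity principle and the prior knowledge (Theorem~\ref{thmA}) that $\fr$ is already defined and analytic on the whole ball, since a priori the power series only controls $\fr^{-1}$ in a neighbourhood of the origin. The $(\Rightarrow)$ direction is essentially a direct appeal to Proposition~\ref{prop:rational_spectral_radius} together with the crude coefficient bound, and the handling of the degenerate and jointly nilpotent cases is routine bookkeeping.
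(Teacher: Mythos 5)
Your proposal is correct, and it reaches the conclusion by a genuinely different route than the paper's. The paper's proof is three lines: it cites Theorem~\ref{NCratinout} (the inner factor of a rational function in Fock space is NC Blaschke) together with \cite[Theorem 4.2]{NC-BSO} to get that $\fr$ is outer iff $\fr$ is pointwise invertible throughout $\B^d_{\aleph_0}$; it then asserts this is equivalent to $\fr(rL)^{-1}\in\bH^\infty_d$ for every $0<r<1$, and that the latter holds iff the radius of convergence of $\fr^{-1}$ at $0$ is at least $1$. In other words the paper reads pointwise invertibility off invertibility of the operators $\fr(rL)$ as $r\uparrow 1$, which is tied to the scaling of the Taylor coefficients.

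You start from the same structural input but packaged as Corollary~\ref{NCratouter} (outerness iff empty singularity locus), and then argue the two implications with different tools. For $R'\geq 1\Rightarrow$ outer, you use Popescu's convergence theorem to promote the formal inverse to an analytic NC function on all of $\B^d_\N$, and the NC identity principle to show $\fr\cdot g\equiv I$, hence $\det\fr(Z)\neq 0$ everywhere. For outer $\Rightarrow R'\geq 1$, you combine Proposition~\ref{prop:rational_spectral_radius} applied to $\fr^{-1}$ (to force $\spr{A'}\leq 1$, since the singular point it produces must lie outside the open ball) with the Hadamard/Cauchy--Schwarz coefficient estimate from the proof of Proposition~\ref{NCszego} (giving $1/R'\leq\spr{A'}$). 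Your route is longer but more hands-on: it isolates the minimal realization $(A',b',c')$ of $\fr^{-1}$ and records the quantitative fact $\spr{A'}\leq 1$ explicitly, rather than appealing to boundedness of $\fr(rL)^{-1}$; it also makes transparent how Proposition~\ref{prop:rational_spectral_radius} handles the boundary case $\spr{A'}=1$ (e.g.\ $\fr=1-\mf{z}_1$) which falls outside the strict-inequality equivalences of Theorem~\ref{thmA}. Both arguments ultimately rest on the same core fact (rational inner $\Rightarrow$ Blaschke), so either is a legitimate proof; yours trades brevity for explicitness and avoids reasoning about the operators $\fr(rL)$ altogether.
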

\begin{proof}
By Theorem \ref{NCratinout}, the inner factor, $\Theta$ of $\mf{r}$ is NC Blaschke, so that $\mf{r}$ is outer if and only if $\mf{r}$ is pointwise invertible in $\B ^d _{\aleph _0}$ by \cite[Theorem 4.2]{NC-BSO}. In particular, $\mf{r}$ is NC outer if and only if $\mf{r} (rL) ^{-1} \in \bH ^\infty _d$ for any $0<r<1$, and this happens if and only if the radius of convergence of the Taylor series at $0$ is at least $1$.
\end{proof}

\begin{eg}
Let $V(L)$ be any NC rational inner and set $\fr (L) := a I + b V(L)$, $a,b \in \C$, and suppose $a \neq 0$. If $| b/a | \leq 1$ then $\fr (L)$ will be outer by \cite[Lemma 3.3]{JM-freeSmirnov}. Otherwise if $w := b/a $ is such that $|w| >1$, let $z=-1/w \in \D$. Then,
\ba \fr (L) & = & w (z-V(L)) \nn \\
& = & w \underbrace{\left( z - V(L) \right) \left(I - \ov{z} V(L) \right) ^{-1}}_{\mbox{NC rational inner, Blaschke}} \cdot \underbrace{\left( I - \ov{z} V(L) \right)}_{\mbox{NC outer}}. \nn \ea In the above, one can verify that the M\"obius transformation of an isometry is always an isometry so that the first factor is NC inner and rational, and therefore NC Blaschke. The second factor has the form $1-B$ for a contractive $B \in \bH ^\infty _d$ and is therefore NC outer by \cite[Lemma 3.3]{JM-freeSmirnov}.  
\end{eg}

\section{Inner-outer factorization of NC polynomials}  \label{ncpoly}

In this section we apply the results of the previous section to NC polynomials, and compute some examples. It is well-known that if
\[
  p(\mf{z}) = b^*(I-A\mf{z})^{-1}c
\]
is a minimal realization of an NC polynonmial $p$, then the matrices $A$ are jointly nilpotent; in particular, if $w$ is any word of length $|w|>\deg{p}$, then $A^w=0$. To see this, observe that if $|w|>\deg p$, then $c^*A^wb=0$, and therefore
\[
  c^*\gamma(A)A^w\beta(A)b=0
\]
for all NC polynomials $\beta$ and $\gamma$. Since the minimal realization is observable and controllable, we conclude that $d^*A^we=0$ for all vectors $d,e$. Let $p \in \fp$ be any NC polynomial with inner-outer factorization $p = \Theta \cdot f.$  By Corollary~\ref{ratkernel} and the above remarks, $p = K \{Z ,y , v \}$ for some jointly nilpotent $Z$ of order $\mr{deg} (p)$. 

\begin{thm} \label{NCration}
Let $p = \Theta \cdot f $ be the NC inner-outer factorization of $p=K\{ Z, y ,v \}$. Then $f =q \in \fp$ is an NC polynomial of degree $\mr{deg} (q) \leq \mr{deg} (p)$, and $\Theta (Z) = p(Z) q(Z) ^{-1}$ is an inner NC rational function. The inner factor of $p$ is NC Blaschke.
\end{thm}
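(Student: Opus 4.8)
The plan is to deduce this as the polynomial specialization of Theorem~\ref{NCratinout}. Since $p$ has only finitely many nonzero Taylor coefficients it is a rational element of $\bH^2_d$, so Theorem~\ref{NCratinout} applies directly with $(A,b,c)$ the minimal realization of $p$: it gives that $\Theta$ and $f$ are both NC rational, that $f$ has a (not necessarily minimal) realization of the form $(A,\tilde b,c)$ of size $N$ for some $\tilde b\in\C^N$ --- crucially with the \emph{same} state tuple $A$ and the same vector $c$ as $p$ --- and that $\Theta(Z)=p(Z)f(Z)^{-1}$ with $\Theta$ NC Blaschke. The first step is simply to invoke this.

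The second, and only genuinely new, step is to observe that the nilpotency of $A$ passes to $f$. As recalled immediately before the statement, minimality of $(A,b,c)$ for the polynomial $p$ forces $A^\om=0$ for every word $\om$ with $|\om|>\deg p$. Hence the Taylor-Taylor series $f(\mf{z})=\tilde b^*(I-\mf{z}A)^{-1}c=\sum_{\om\in\F^d}\tilde b^*A^\om c\,\mf{z}^\om$ has only finitely many nonzero terms and terminates at degree $\deg p$, so $f=q$ is an NC polynomial with $\deg q\le\deg p$. The displayed formula $\Theta(Z)=p(Z)q(Z)^{-1}$ is then exactly the one produced by Theorem~\ref{NCratinout}, and it is valid pointwise throughout $\B^d_{\aleph_0}$ because the outer function $q$ is pointwise invertible there by \cite[Lemma~3.2]{JM-freeSmirnov} and \cite[Theorem~4.2]{NC-BSO}; in particular $\Theta$ is an inner NC rational function.

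Finally, the NC Blaschke property of the inner factor is inherited verbatim from Theorem~\ref{NCratinout}: $\Theta=\Theta(L)1$ is a rational element of $\bH^2_d$, hence lies in the NC disk algebra $\A$ by Theorem~\ref{thmA}, and every NC inner function in $\A$ is Blaschke by \cite[Theorem~6.10]{NC-BSO}. I do not expect a real obstacle anywhere in this argument; the substantive results (that an NC rational function in Fock space is an NC kernel analytic across the boundary, and that inner functions in the disk algebra are Blaschke) have already been established, so the proof reduces to the single observation that a jointly nilpotent state tuple in the realization of $p$ is shared by the realization of its outer factor, which collapses ``rational'' to ``polynomial.'' The only point meriting a careful sentence is the claim that $f$ really does inherit the state tuple $A$ and the vector $c$ of $p$, which is precisely what the computation $f=\Theta(L)^*K\{Z,y,v\}=K\{Z,\Theta(Z)^*y,v\}$ in the proof of Theorem~\ref{NCratinout} delivers.
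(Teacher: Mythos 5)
Your proposal is correct and runs essentially the same route as the paper: both arguments rest on the fact that the outer factor shares the jointly nilpotent state of $p$, which forces its power series to terminate at degree $\deg p$. The only cosmetic difference is that the paper realizes this via the kernel identity $f=\Theta(L)^*K\{Z,y,v\}=K\{Z,\Theta(Z)^*y,v\}$ with $Z$ jointly nilpotent, while you realize it via the realization $(A,\tilde b,c)$ supplied by Theorem~\ref{NCratinout} with $A$ jointly nilpotent; since $A$ is jointly similar to $\ov{Z}$, these are the same observation in two guises.
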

\begin{proof}
The inner factor of $p \in \fp$ is NC Blaschke by Theorem \ref{NCratinout}. Observe that 
\ba f & = & f(L) 1 = \Theta (L) ^* p(L) 1 \nn \\
& = & \Theta (L) ^* K\{ Z, y , v \} \nn \\
& = & K\{ Z , \Theta (Z) ^* y , v \}. \nn \ea 
Since $Z$ is a jointly nilpotent row contraction of order $n $, $n := \mr{deg} (p)$ we have that
\ba f & = &  K \{ Z , \Theta (Z) ^* y ,v \} \nn \\
& = & \sum _{|\alpha | \leq \mr{deg} (p)} \left( Z^\alpha v , \Theta (Z) ^* y \right) _{\C ^{m_n}} L^\alpha 1, 
\nn \ea is also an NC polynomial, $f=q \in \fp$ of degree less than or equal to $\mr{deg} (p)$.
In particular, 
$$ \Theta (Z) = p(Z) q(Z) ^{-1}, $$ is an NC rational inner function.
\end{proof}
Given a free polynomial, $p \in \fp$, let $T_p$ be the hereditary subset of $\F ^d$ determined by the non-zero coefficients of $p$. That is, if $S_p := \{ \alpha \in \F ^d | \ p_\alpha \neq 0 \}$, then $T_p$ is the set of all $\beta \in \F ^d$ so that there exists a $\ga \in \F ^d$ such that $\ga \beta \in S_p$. Let $|T_p|$ be the number of elements in the tree, $T_p$. 
\begin{cor} \label{nonouterpoly}
An NC polynomial $p \in \fp$ is NC outer if and only if $\mr{Sing} _{|T_p|} (p)= \emptyset$. In particular, $p$ is NC outer if and only if $p(Z)$ is pointwise invertible in $\B ^d _\N$.
\end{cor}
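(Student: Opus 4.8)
The plan is to deduce Corollary~\ref{nonouterpoly} from the machinery already in place, specializing the rational results of Section~\ref{ratinout} to NC polynomials by exploiting the joint nilpotency of the realization matrices. First I would recall from the discussion preceding Theorem~\ref{NCration} that if $p \in \fp$ has minimal realization $(A,b,c)$ then $A$ is jointly nilpotent of order $\deg p$, so $p = K\{Z,y,v\}$ for a jointly nilpotent row contraction $Z$. The size of the minimal realization, however, is not directly the relevant counting parameter; what governs the singularity locus is the combinatorial structure of the monomials actually occurring, which is precisely why the corollary is phrased in terms of $|T_p|$ rather than $N$. So the first genuine step is to relate a minimal realization of $p$ (or of $p^{-1}$, if it exists) to the hereditary tree $T_p$, showing that one can build a realization of $p$ of size $|T_p|$ (the ``truncated backward shift'' realization on $\C^{T_p}$, with $A_j$ acting on the tree by $\beta \mapsto j\beta$ when $j\beta \in T_p$ and $0$ otherwise), and that this realization has the same joint nilpotency and is still observable/controllable enough to run the arguments of Corollary~\ref{cor:singularity_locus}.

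Next I would invoke Theorem~\ref{NCration}: the inner factor $\Theta$ of $p$ is NC Blaschke, hence $\Theta \neq I$ if and only if $\mr{Sing}(p) \neq \emptyset$, and $p$ is NC outer precisely when $\mr{Sing}(p) = \emptyset$ (equivalently $p$ is pointwise invertible in $\B^d_{\aleph_0}$, by \cite[Theorem 4.2]{NC-BSO}). So the content reduces to: $\mr{Sing}(p) \neq \emptyset \iff \mr{Sing}_{|T_p|}(p) \neq \emptyset$. The ``$\Leftarrow$'' direction is trivial. For ``$\Rightarrow$'', the strategy mirrors the proof of Corollary~\ref{cor:singularity_locus}, applied to the size-$|T_p|$ realization: if $\mr{Sing}(p) \neq \emptyset$ then (by the Blaschke property and Corollary~\ref{cor:singularity_locus} applied with $N$ replaced by $|T_p|$) there is a point in $\mr{Sing}_{|T_p|+1}(p)$; but for a \emph{polynomial} one can do better, because $p(Z)$ is singular for some $Z \in \B^d_n$ iff the pencil $L_A(Z)$ is singular iff the associated linear map $\ell_{A,Z}$ is singular, and the nilpotency of $A$ means $L_A(Z)$ and $\ell_{A,Z}$ are already determined at level $|T_p|$ — one can compress $Z$ to an invariant subspace of dimension at most $|T_p|$ without changing whether $p(Z)$ is singular. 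This is the step that turns $|T_p|+1$ into $|T_p|$, and uses the polynomial structure essentially.

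The second sentence of the corollary — that $p$ is outer iff $p(Z)$ is pointwise invertible on $\B^d_\N$ (the finite levels, without the $\infty$ level) — then follows by combining: outer $\Rightarrow$ invertible on $\B^d_{\aleph_0} \supseteq \B^d_\N$ (already cited); and conversely, invertibility of $p(Z)$ on all finite levels forces $\mr{Sing}_n(p) = \emptyset$ for every finite $n$, hence (by the first half) $\mr{Sing}(p) = \emptyset$, hence outer. One subtlety is that $\mr{Sing}(p)$ as defined includes the $n=\infty$ level, so I would note that for a rational (indeed polynomial) function the $\infty$-level singularity locus is empty as soon as all finite levels are, which follows from the realization description of the singularity locus via the pencil together with the fact that a jointly nilpotent tuple is jointly similar to a point and the compression argument above; alternatively this is exactly the content of \cite[Theorem 4.2]{NC-BSO} combined with Corollary~\ref{cor:singularity_locus}.

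The main obstacle I anticipate is the bookkeeping that produces the sharp bound $|T_p|$ rather than the cruder bound $N+1$ coming from a black-box application of Corollary~\ref{cor:singularity_locus}. This requires (a) constructing the tree realization on $\C^{T_p}$ and verifying it is a genuine realization of $p$ (straightforward but needs care with the hereditary/``backward'' indexing), and (b) the compression argument showing that if $p(Z)$ is singular for some $Z$ at a high level, one can pass to an at-most-$|T_p|$-dimensional $Z$-invariant (or $Z^*$-invariant) subspace on which $p$ is still singular — here the point is that $p(Z)$, being a polynomial of degree $\deg p$ in $Z$, factors through the finite-dimensional ``jet'' algebra generated by words of length $\le \deg p$, all of whose supports lie in $T_p$. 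Once these two pieces are in hand the corollary falls out of Theorem~\ref{NCration} and Corollary~\ref{cor:singularity_locus} verbatim.
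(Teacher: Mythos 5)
Your reduction to the claim $\mr{Sing}(p)\neq\emptyset \iff \mr{Sing}_{|T_p|}(p)\neq\emptyset$ is the right reduction, and the first half (invoking Theorem~\ref{NCration} to get that the inner factor of $p$ is Blaschke, hence outer $\iff$ $\mr{Sing}(p)=\emptyset$) matches the structure of the paper's argument. But there is a genuine gap in your step (b), and the route you take to produce the bound $|T_p|$ does not hold up.

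First, the assertion ``$p(Z)$ is singular for some $Z\in\B^d_n$ iff the pencil $L_A(Z)$ is singular iff $\ell_{A,Z}$ is singular'' is false when $A$ is the jointly nilpotent $d$-tuple from a realization of the polynomial $p$: in that case $\sum A_j\otimes Z_j$ is nilpotent for \emph{every} $Z$, so $L_A(Z)=I-\sum A_j\otimes Z_j$ is always invertible, independently of whether $p(Z)$ is singular. The pencil that detects singularity of $p(Z)$ is the pencil occurring in a minimal realization of the rational function $p^{-1}$, and its coefficient tuple is not nilpotent (unless $p$ is a nonzero constant). So the entire ``$L_A(Z)$ is already determined at level $|T_p|$'' step is not about the object you think it is.

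Second, the compression argument does not go through as stated. Write $p(Z)^*y = \sum_{\alpha\in S_p}\ov{p_\alpha}\,(Z^\alpha)^*y$ and set $y_\beta:=(Z^\beta)^*y$ for $\beta$ a prefix of a word in $S_p$; this is a family of at most $|T_p|$ vectors (it is the reversal of your suffix tree) and these are the only vectors that occur in evaluating $p(Z)^*y$. However the span $W$ of these vectors is \emph{not} invariant under the $Z_j^*$: invariance would require $Z_j^*y_\beta=y_{\beta j}\in W$ for every $j$, but $\beta j$ need not be a prefix of any word in $S_p$, and then $Z_j^*y_\beta$ escapes $W$. So you cannot compress $Z$ to $W$ and conclude the compressed tuple still annihilates $y$, and the naive replacement of $Z$ by a truncated-shift model on $\C^{T_p}$ changes the vanishing relation $\sum\ov{p_\alpha}y_\alpha=0$ (that relation encodes a genuine linear dependence among the $y_\alpha$'s, which is destroyed if one declares the basis vectors $e_\alpha$ independent). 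The ``jet algebra'' heuristic does not repair this: the unital algebra generated by the entries of $Z$ is not small, and the degree bound on $p$ controls which \emph{words} appear, not which subspace is co-invariant. Producing a $|T_p|$-dimensional witness genuinely requires more work than a compression, and this is precisely the content the paper delegates to a refinement of \cite[Proposition 5.13]{NC-BSO} rather than reproving.

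Third, even if you had a size-$|T_p|$ realization of $p$ (the tree realization you describe does exist, with suitable indexing), you cannot feed it into Corollary~\ref{cor:singularity_locus} as a drop-in replacement for the minimal one: that corollary's proof pipes the size $N$ through \cite[Algorithm 4.3]{HMS-realize} and \cite[Theorem 3.10]{Volcic}, both of which are statements about \emph{minimal} realizations, so a non-minimal realization of size $|T_p|$ gives you at best a $|T_p|+1$-type bound for the minimal realization of $p^{-1}$ — and minimality of that realization is exactly what Proposition~\ref{prop:rational_spectral_radius} needs. So you would still have an off-by-one to repair, and the repair you propose (the jet/compression argument) is the part that fails.

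For the record, the paper's own proof is much shorter: it simply cites that a ``straightforward refinement'' of \cite[Proposition 5.13]{NC-BSO} shows $\mr{Sing}_{|T_p|}(p)=\emptyset \iff \mr{Sing}(p)=\emptyset$ (including the infinite level), and then appeals to Corollary~\ref{NCratouter}. Your instinct that the bound $|T_p|$ must encode some combinatorial truncation of the word tree is right, but the mechanism is not the one you proposed.
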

\begin{proof}
A straightforward refinement of the argument of \cite[Proposition 5.13]{NC-BSO} shows that $\mr{Sing} _{|T_p|} (p)$ is empty if and only if the full NC variety $\mr{Sing} (p)$ is empty (this includes the infinite level). The claim now follows from Corollary \ref{NCratouter}.
\end{proof}

\begin{eg}
Let $p (L) = I + L_1 + L_1 L_2$. Then we know that the outer factor, $q$, of $p$ must be of the form:
$$ q(L) = a I + bL_1 +c L_2 + d L_1 L_2. $$ where we may assume $a>0$. This outer factor must obey $q(L) ^* q(L) = p(L) ^* p(L)$, and it must maximize $| q(0) | ^2$. This gives the equations: 
$$ p(L) ^* p(L) = 3I + \underbrace{L_1 +L_2 + L_1 L_2}_{=:g(L)} + g(L) ^*, $$ and 
%$$ q(L) ^* q(L) = (a^2 + b^2 + c^2 + d^2) I + \ov{a} b L_1 + (\ov{a} c + \ov{b} d ) L_2 + \ov{a} d L_1 L_2 + \mbox{conjugate terms}. $$
$$ q(L) ^* q(L) = (a^2 + b^2 + c^2 + d^2) I + a b L_1 + (a c + \ov{b} d ) L_2 + a d L_1 L_2 + \mbox{conjugate terms}. $$ 
Equating coefficients yields:
%$$ \ov{a} c + \frac{1}{|a| ^2} =1, \quad b = \frac{1}{\ov{a}} = d, $$ and 
%$$ 0 = r(t) := t^4 - 3 t^3 + 3 t^2 - 2t +1; \quad \quad t := |a| ^2. $$
$$ a c + \frac{1}{a ^2} =1, \quad b = \frac{1}{a} = d, $$ and 
$$ 0 = r(t) := t^4 - 3 t^3 + 3 t^2 - 2t +1; \quad \quad t := a ^2. $$ 
One can check $t=1$ is a root of this polynomial (taking $a=1$ gives $p(L)$). Factoring this root out gives the cubic polynomial 
$$t^3 - 2t^2 +t -1, $$ which has two complex roots and one real root $t_0 \simeq 1.7549 >1$. 
It follows that if we set $a= \sqrt{t_0}$, and set 
$$ b = d = \frac{1}{\sqrt{t_0}}, \quad c = \frac{1}{\sqrt{t_0}} \left( 1 - \frac{1}{t_0} \right),$$ then the $q(L)$ with these coefficients is the outer factor of $p$. 
\end{eg}

\begin{eg}
The Drury-Arveson space $H^2_d$ can be viewed as the subspace of the full Fock space spanned by the NC kernels $K_z := K\{ z , 1 ,1 \}$ for $z \in \B ^d = \B ^d _1 = \left( \C ^{1 \times d} \right) _1$. This subspace is left shift co-invariant and it follows that the multiplier algebra, $H^\infty _d$, of $H^2_d$ is a complete quotient of $\bH^{\infty}_d$. 

In forthcoming work, Aleman, Hartz, McCarthy and Richter have proven that any element $h \in H^2 _d$ has a unique \emph{quasi-inner--free outer factorization} \cite{AHMcR-freeouter}. That is $h = \theta \cdot f$ where $\theta$ is the compression of an inner left multiplier $\Theta \in \bH ^\infty _d$ to $H^\infty _d$, and $f = P_{H^2} F$ is the projection of an outer $F \in \bH ^2 _d$ onto $H^2 _d$. In particular, there are many examples of outer functions $h \in H^2 _d$ which are not free outer, in the sense that there is no NC outer $H \in \bH ^2 _d$ so that $P_{H^2} H = h$. (A function $h\in H^2_d$ is called {\em outer} if it is cyclic for the algebra of multipliers of $H^2_d$, which happens if and only if there is a sequence of polynomials $q_n(z_1, \dots, z_d)$ such that $h\cdot q_n\to 1$ in the Hilbert space norm.) A simple example is given by the following polynomial \cite{Richter}; we thank the authors of \cite{AHMcR-freeouter} for their permission to include it here:
$$ p(z_1 , z_2 ) = 1 - 2 z_1 z_2. $$ To see that this polynomial is outer in $H^2 _2$, we argue as follows: the norm of a polynomial $q(z_1, z_2)=\sum a_{mn}z_1^mz_2^n$  in $H^2_2$ is by definition
\begin{equation}\label{eqn:DA-norm-def}
  \|q\|_{H_2^2}^2 =\sum_{m,n} \frac{m! n!}{(m+n)!} |a_{mn}|^2.
\end{equation}
We consider the Dirichlet space, $\scr{D}$, of analytic functions $f=\sum a_n z^n$ in the unit disk $|z|<1$, equipped with the Hilbert space norm
\begin{equation}\label{dirichlet-norm-def}
  \|f\|_\scr{D} ^2 = \sum_{n=0}^\infty (n+1)|a_n|^2.
\end{equation}
From (\ref{eqn:DA-norm-def}) and Stirling's formula, there is an absolute constant $C$ such that for any one-variable polynomial $q(z)=\sum_n a_n z^n$, we have
\begin{equation}\label{eqn:DA-dirichlet-estimate}
  \|q(2z_1z_2)\|_{H_2^2}^2 \leq C\sum_{n=0}^\infty \sqrt{n+1}|a_n^2|\leq C\sum_{n=0}^\infty (n+1)|a_n|^2 =\|q(z)\|_\scr{D} ^2.
\end{equation}
From \cite[Lemma 8]{BrownShields-cyclic}, the function $(1-z)$ is cyclic for the Dirichlet space $\scr{D}$, hence there exists a sequence of one-variable polynomials $q_n(z)$ such that $(1-z)\cdot q_n(z) \to 1$ in the $\scr{D}-$ norm as $n\to \infty$. It then follows from (\ref{eqn:DA-dirichlet-estimate}) that $(1-2z_1z_2)\cdot q_n(2z_1z_2)\to 1$ in $H^2_2$, as desired.

%assume the converse. Then if $S = (S_1 , S_2)$ is the Arveson $2-$shift, $p(S) ^*$ is not injective so that there is an $h \in H^2 _d$ so that $0 = p(S) ^* h = p(L) ^* h$, or equivalently,
%$$ 2 L_2 ^* L_1 ^* h = h, $$ so that $h$ is an eigenvector of $p(L)^*$. By a result of Davidson-Pitts \cite[Theorem 2.6]{DP-inv}, $h = k_z \in H^2 _d$ is a Szeg\"o kernel for some point $z = (z_1, z_2) \in \B ^2$, and therefore $z_1 z_2  = \frac{1}{2}$. Setting $t := |z_1| ^2$, we obtain that 
%$$ |z_1 | ^2 + |z_2 | ^2 = \frac{1}{4t} + t =: f(t). $$ This function is concave up and has an absolute minimum on $[0,1]$ of $1$ at $t= 1/2$. This proves that $|z_1 | ^2 + |z_2 | ^2 \geq 1$, which contradicts that $z \in \B ^2$. 

A natural lift of the above polynomial is the symmetric polynomial $P(\mf{z}_1,\mf{z}_2) = 1 - \mf{z}_1 \mf{z}_2 - \mf{z}_2 \mf{z}_1$. As in \cite[Example 6.8]{NC-BSO}, we have that the inner-outer decomposition of $P$ is
$$
P(\mf{z}_1,\mf{z}_2 ) 
 =  \underbrace{\left( \frac{1}{\sqrt{2}} - V(\mf{z}) \right)\left( 1 - \frac{1}{\sqrt{2}} V(\mf{z} ) \right) ^{-1}}_{-\mu _{\frac{1}{\sqrt{2}}} (V) =: \Theta _P } \cdot \underbrace{\sqrt{2} \left( 1 - \frac{1}{\sqrt{2}} V(\mf{z} ) \right)}_{=:F_P}. $$
Here, $V(\mf{z}) = V (\mf{z} _1 , \mf{z} _2 ) := \frac{1}{\sqrt{2}} \left( \mf{z}_1 \mf{z}_2 + \mf{z}_2 \mf{z}_1 \right)$ is inner, and evaluation of the M\"obius transformation, $\mu _{-\frac{1}{\sqrt{2}}} (z) = (z + 1/ \sqrt{2} ) (1 + z/\sqrt{2}) ^{-1}$, at any isometry is again an isometry, so that $-\mu _{\frac{1}{\sqrt{2}}} (V) = \Theta _P$ is again NC inner (in fact NC rational and Blaschke). The second factor $F_P$ is, up to a constant factor, the identity plus a (strictly) contractive left multiplier, and therefore it is NC outer by \cite[Lemma 3.3]{JM-freeSmirnov}. Uniqueness of the quasi-inner--free outer factorization of $p$ \cite{AHMcR-freeouter}, now implies that $p$ cannot be free outer. If $p$ was free outer, then $p = 1 \cdot p$ would be the unique quasi inner--free outer factorization of $p$, but clearly $p = \theta _p \cdot f_p$ where $ \theta _p ( z_1 , z_2 ) = \Theta _P (z_1 ,z_2 )$ and $f_p (z_1 ,z_2) = F_p (z_1 ,z_2)$ is a second quasi-inner--free outer factorization of $p$, a contradiction.

Since $P \in \fp$ is a non-outer free polynomial of homogeneous degree $2$, Corollary \ref{nonouterpoly} above implies that there is a point $Z \in \B ^d _N$ for $N \leq 5 =|T_P|$ so that $\mr{det} P (Z) =0$. To construct a point in the singularity set of $P$, we write $V = V(L_1,L_2)$, which is an isometry on $\bH^2_d$. Observe that $F_P = ( I - \frac{1}{\sqrt{2}} V )  \in \bH ^\infty _d$ is a bounded, invertible left multiplier so that  
$$ f (\mf{z} ) := \sqrt{2} F_p (\mf{z} ) ^{-1} = \left( 1 - \frac{1}{2} ( \mf{z_1} \mf{z} _2 + \mf{z} _2 \mf{z} _1 ) \right) ^{-1} \in \bH ^\infty _d, $$ is also a bounded left multiplier. An easy geometric series argument now verifies that if $f = f(L) 1$, then $P(L)^* f =0$. Moreover, $f$ is an NC rational function and a Schur complement argument shows that a realization for $f$ is given by the formula:
$$ f (\mf{z} ) = \left( 1, 0 , 0 \right) \left( I_3 + \underbrace{\frac{1}{\sqrt{2}} \bpm 0 & 0 & 1 \\ 1 & 0 & 0 \\ 0 & 0 & 0\epm}_{=:-A_1} \mf{z} _1 + \underbrace{\frac{1}{\sqrt{2}} \bpm 0 & 1 & 0 \\ 0 & 0 & 0 \\ 1 & 0 & 0 \epm}_{=:-A_2} \mf{z}_2 \right)^{-1} \bpm 1 \\ 0 \\ 0 \epm. $$ That is, $f$ has the realization $(A , b, c )$ where $b = c = \bsm 1 \\ 0 \\ 0 \esm \in \C ^3$. This $A \in \C ^{(3\times 3)\cdot 2}$ is a row contraction but it is not a strict row contraction. However it is easy to check that $\mr{spr} (A) <1$, and that $A = D W D^{-1}$ is similar to a strict row contraction $W \in \B ^2 _3$ where 
$$ W _1 = - \bpm 0 & 0 & 2^{-3/4} \\ 2 ^{-1/4} & 0 & 0 \\ 0 & 0 & 0 \epm, \quad W_2 = - \bpm
0 & 2 ^{-3/4} & 0 \\ 0 & 0 & 0 \\ 2 ^{-1/4} & 0 & 0 \epm, \quad \mbox{and} \quad D = \bpm 1 & & \\  & 2 ^{1/4} & \\ & & 2 ^{1/4} \epm. $$ It then follows that $f = K \{ W , e_1 , e_1 \}$ where $e_1 = b = c \in \C ^3$ is the first standard basis vector of $\C ^3$. Since $P(L) ^* f =0$, we have that 
$$ 0 = P(L) ^* f = P(L) ^* K\{ W, e_1 , e_1 \} = K \{ W , P(W) ^* e_1 , e_1 \}, $$ and therefore (since $e_1$ is cyclic for the unital algebra generated by $W$) $P(W) ^* e_1 =0$. Indeed, this is easily checked:
\ba P(W)^* & = & I_3 - W_2 ^* W_1 ^* - W_1 ^* W_2 ^* \nn \\
& = & \bpm 0 & 0 & 0 \\ 0 & - \frac{1}{2} & 0 \\ 0 & 0 & - \frac{1}{2} \epm. \nn \ea 
It follows that $(W , e_1 ) \in \mr{Sing} _3 ( P)$ is a finite dimensional point in the NC variety of $P$. 
\end{eg}

\bibliographystyle{abbrvnat}
\bibliography{Bibs/ncBla}

\vfill

\Addresses

\end{document}